\documentclass[11pt,reqno]{amsart}
\usepackage{amsmath,amssymb,amsthm,mathtools}
\usepackage{enumitem}
\usepackage[a4paper,margin=1.05in]{geometry}
\usepackage[hidelinks]{hyperref}
\usepackage{microtype}
\usepackage{bm}
\usepackage{mathrsfs}
\usepackage{cite}

\newtheorem{theorem}{Theorem}[section]
\newtheorem{lemma}[theorem]{Lemma}
\newtheorem{proposition}[theorem]{Proposition}

\newtheorem{remark}[theorem]{Remark}

\newtheorem{assumption}[theorem]{Assumption}

\newcommand{\R}{\mathbb{R}}

\newcommand{\cS}{\mathcal{S}}

\title[Porous-medium combustion in higher dimensions]{Sharp Thresholds for the Porous Medium Equation with a Combustion Reaction in Higher Dimensions}
\author{Maolin Zhou}
\address{Chern Institute of Mathematics, Nankai University, Tianjin 300071, P. R. China}
\email{zhouml123@nankai.edu.cn}
\subjclass[2020]{Primary 35K65, 35B40; Secondary 35R35, 35K57}
\keywords{Porous medium equation, combustion reaction, sharp threshold, radial stationary state, free boundary, zero-number argument}
\date{}

\begin{document}

\begin{abstract}
We study the porous medium equation with a combustion-type reaction,
\[
u_t=\Delta u^m+f(u),\qquad x\in\mathbb R^N,\ t>0,
\]
for radial, nonnegative, compactly supported initial data. A complete classification of the long-time behaviour of bounded solutions is established. In dimension $N=2$, every such solution converges locally uniformly to one of the constants $0$, $\theta$, or $1$; for ordered families of initial data there exists a unique threshold parameter separating vanishing from spreading, and the critical solution converges to the ignition temperature $\theta$. In dimensions $N\ge3$, under a natural total-disconnectedness condition on the set of central values of ground states, every bounded solution converges locally uniformly to either $0$, $\theta$, $1$, or a radial ground state $U\in\mathcal S$. Moreover, the ignition state $\theta$ is excluded as a transition limit in $N\ge3$ via a novel normalized annular perturbation argument. For transition solutions, we provide estimates for the propagation speed of the free boundary: in dimension two,
\[
b(t)\asymp \frac{\sqrt t}{(\log t)^{\frac{m-1}{2m}}},
\]
and in dimensions $N\ge3$, whenever the limit is a ground state,
\[
b(t)\asymp t^{\frac{m}{N(m-1)+2}}.
\]
These results reveal a sharp dimensional dichotomy in the degenerate combustion dynamics.
\end{abstract}

\maketitle

\section{Introduction}

The study of nonlinear evolution equations with reaction terms is a central theme in applied mathematics, providing fundamental models for phenomena in combustion, population genetics, and fluid mechanics. Among these, the porous medium equation (PME) with a combustion-type reaction,
\begin{equation}\label{eq:PDE}
 u_t=\Delta u^m+f(u),\qquad x\in\R^N,\quad t>0,
\end{equation}
where $m>1$, $N\ge2$, and the initial datum is nonnegative, radial, continuous, and compactly supported, occupies a distinguished position. The interplay between degenerate nonlinear diffusion — embodied by the operator $\Delta u^m$ — and a localized nonlinear source $f(u)$ of ignition type (with an ignition temperature $\theta\in(0,1)$) gives rise to a remarkably rich dynamical landscape. A hallmark of such systems is the emergence of a moving free boundary $r=b(t)$ separating the positive phase from the zero region, a phenomenon intimately tied to the finite speed of propagation inherent in the porous medium operator.

\subsection*{The classical reaction–diffusion theory}

For the uniformly parabolic case $m=1$, the long-time behaviour, wave propagation, and sharp threshold phenomena are by now well understood. The seminal work of Zlatoš \cite{Zlatos2006} on the one-dimensional reaction–diffusion equation
$$
u_t=u_{xx}+f(u),\qquad x\in\R,
$$
with ignition-type nonlinearity $f$, established a complete classification for solutions emanating from compact interval initial data $\chi_{[-L,L]}$. It was shown that there exists a unique critical length $L_*$ such that solutions with $L<L_*$ vanish uniformly to $0$, those with $L>L_*$ spread locally uniformly to $1$, and the critical solution converges locally uniformly to the ignition temperature $\theta_0$. This result resolved a long-standing question of Kanel' and laid the foundation for subsequent developments.

Du and Matano \cite{DuMatano2010} substantially extended this theory by proving that every bounded solution with compactly supported initial data in one space dimension converges to a stationary solution. For bistable and combustion-type nonlinearities, they established a sharp threshold phenomenon for ordered families of initial data: there exists a unique threshold separating extinction (convergence to $0$) from spreading (convergence to $1$), with the threshold solution converging to a nontrivial stationary state. The key analytic tool in these works is the zero-number argument (or intersection-comparison principle), which provides a topological control on the oscillations of the difference of two solutions and has become a cornerstone of the theory of scalar reaction–diffusion equations.

The variational structure of the system also plays a crucial role. In particular, for finite-energy stationary states, spectral instability arguments — based on the negative principal eigenvalue of the linearised operator — have been employed to exclude certain limits and to sharpen threshold criteria; see Muratov and Zhong \cite{MuratovZhong2018} for a recent application in higher dimensions.

\subsection*{Challenges for the porous medium equation}

Passing from $m=1$ to the degenerate case $m>1$ introduces profound mathematical obstructions. For a decaying stationary state $U$, the linearisation of the PME operator about $U$ takes the form
$$
\phi_t=\Delta(mU^{m-1}\phi)+f'(U)\phi.
$$
Since $U(x)\to0$ as $|x|\to\infty$, the coefficient $U^{m-1}(x)$ vanishes at infinity, so the principal elliptic part degenerates severely in the spatial tail. Consequently, the standard spectral instability arguments that rely on uniform ellipticity break down. Moreover, the free boundary, being a characteristic surface of the degenerate equation, requires delicate handling: classical comparison principles must be adapted to weak solutions with moving zero sets.

To circumvent these difficulties, recent breakthroughs have focused on non-perturbative geometric and topological tools. Lou and Zhou \cite{LouZhou2024} established a degenerate intersection-number theorem (zero-number argument) for radial solutions of the PME with reaction, which allows one to strictly control the number of positive intersections between two solutions. This powerful result was later extended by Wu \cite{WuJDE2025} to unbounded intervals and to more general reaction terms. Specifically, Wu considered the half-line problem
$$
u_t=(u^m)_{xx}+f(u),\qquad x>0,
$$
with a Robin-type boundary condition at $x=0$, for both bistable and combustion-type nonlinearities. By introducing a parameter in the initial data, Wu established a spreading-vanishing dichotomy and demonstrated that the $\omega$-limit set of a transition solution depends critically on the specific type of nonlinearity, thus providing a complete picture on the half-line.

Using these tools, Lou and Zhou obtained a quasiconvergence theorem: every bounded radial orbit in the whole space has a nonempty compact connected $\omega$-limit set, and every element of that set is a whole-space stationary solution. This reduces the long-time classification to the study of radial stationary states.

\subsection*{Dimensional dichotomy and the exclusion of sub-ignition limits}

A major challenge specific to higher dimensions ($N\ge3$) is the possible appearance of the sub-ignition constant $\theta$ as a transition limit. Unlike the uniformly parabolic case, where such a limit can be excluded by linearisation around the ground state, the degeneracy of the PME operator prevents a direct spectral approach. In the present work, we overcome this difficulty by observing that, near $\theta$, the evolution of the solution behaves like that of the heat equation (after suitable rescaling). Combining this with a delicate analysis of the heat kernel and the zero-number argument, we derive contradictions to exclude $\theta$ from the $\omega$-limit set of any transition orbit in dimensions $N\ge3$. This is the first rigorous exclusion result of its kind for degenerate reaction–diffusion systems.

In dimension two ($N=2$), the situation is fundamentally different: the logarithmic capacity of the two-dimensional Laplacian forces any transition solution to converge to $\theta$. We prove that the threshold solution in $N=2$ converges locally uniformly to $\theta$, and that this convergence is sharp in the sense that two distinct ordered transition solutions cannot both converge to $\theta$. This completes the long-time classification for radial solutions in all dimensions $N\ge2$.

\subsection*{Propagation speed and interface dynamics}

Beyond the classification of $\omega$-limit sets, the determination of the propagation speed of the free boundary is a central issue in the PME theory. For the classical PME with Fisher--KPP-type reaction, Du, Quirós, and Zhou \cite{DuQuirosZhou2020} established exact logarithmic corrections to the asymptotic wave speed in dimensions $N\ge2$:
\[
b(t)=c^* t-(N-1)c^\#\log t+\mathcal{O}(1),
\]
revealing a fundamental dimensional shifting effect due to the interplay between nonlinear diffusion and the geometric curvature of the propagating front.

%For combustion-type reactions, the propagation law is more intricate and has been the subject of recent investigations. In dimension two, the logarithmic capacity of the Laplacian in the reaction-free outer annulus prevents the existence of a regular algebraic Stefan profile; nevertheless, we prove the universal two-sided power-law bound
%\[
%c\left(\frac{t}{\log t}\right)^{1/2}\le b(t)\le C\left(\frac{t}{\log t}\right)^{1/2}
%\]
%for the threshold solution, with absolute constants $c,C>0$, under only the local monotonicity assumption (F2). This result does not require any additional slope-trapping or asymptotic core-ratio hypotheses. In dimensions $N\ge3$, the exact algebraic tail of a radial ground state $U\in\mathcal S$ selects a self-similar scaling exponent
%\[
%\beta_N=\frac{m}{N(m-1)+2}.
%\]
%We prove that for every transition orbit converging to such a ground state, the free boundary satisfies the unconditional two-sided estimate
%\[
%c_Ut^{\beta_N}\le b(t)\le C_Ut^{\beta_N}
%\]
%for some constants $c_U,C_U>0$ depending on $U$. The proof uses explicit Barenblatt-type self-similar barriers and does not require any a priori separated-tail hypothesis. These results establish a sharp dimensional dichotomy in the propagation dynamics of degenerate porous-medium combustion.

Very recently, Liu and Wu \cite{LiuWuCombustion} refined the estimates for the asymptotic speed of the transition solution in the two-dimensional case by providing a precise characterization of the lower-order term in the expansion $b(t)=2y_0\sqrt{t}[1+o(1)]$ obtained by Lou and Zhou \cite{LouZhou2024}. Their results reveal that there is no universal characterization of this lower-order term for general combustion-type nonlinearities $f$; the refined asymptotics depend sensitively on the detailed structure of the reaction function. This underscores the richness of the PME with combustion and motivates the sharp threshold analysis presented here.

\subsection*{Main results}

The principal aim of this paper is to provide a complete classification of the asymptotic behaviour of all bounded radial solutions of \eqref{eq:PDE} with initial data in the class $\mathscr X$ defined below, under mild assumptions on the reaction term $f$ (Assumption \ref{ass:reaction}). The classification reveals a sharp contrast between $N=2$ and $N\ge3$:

\begin{itemize}
\item For $N=2$, every bounded radial solution converges locally uniformly to one of the constants $0$, $\theta$, or $1$. Moreover, for an ordered family of initial data, there exists a unique threshold parameter $\sigma_*$ (possibly $+\infty$) such that subthreshold solutions vanish uniformly, superthreshold solutions spread to $1$, and the threshold solution converges to $\theta$.
\item For $N\ge3$, every bounded radial solution converges locally uniformly to $0$, $\theta$, $1$, or a ground state $U_*\in\mathcal S$. Assuming the set of central values of ground states is totally disconnected, the same sharp threshold phenomenon holds, with the threshold solution converging to a ground state rather than to $\theta$.
\end{itemize}

The exclusion of $\theta$ as a transition limit in $N\ge3$ is a key novelty of this work, and it is achieved through a refined analysis of the heat equation near the ignition temperature, combined with the degenerate zero-number theorem. The proof of the threshold uniqueness relies on a comparison argument using shifted and scaled solutions, which is delicate in the degenerate setting.

The paper is organized as follows. In Section 2 we recall the well-posedness theory, the pressure formulation, and basic properties of the free boundary, including Darcy's law. Section 3 is devoted to a detailed classification of radial stationary states, establishing the existence of ground states in dimensions $N\ge3$ and their non-comparability. In Section 4 we present the degenerate zero-number theorem for the porous medium equation and use it to derive the quasiconvergence of every bounded radial orbit to a stationary solution. Section 5 introduces the vanishing and spreading parameter sets and proves the existence of a threshold interval. Section 6 contains the main novel ingredient of this work: the exclusion of the ignition state $\theta$ as an $\omega$-limit in dimensions $N\ge3$, achieved via a rescaling argument near $\theta$ and a delicate heat-equation analysis. Section 7 proves the uniqueness of the threshold parameter in all dimensions, distinguishing the two-dimensional case, where the threshold solution converges to $\theta$, from the higher-dimensional case, where it converges to a ground state. Section 8 derives rigorous two-dimensional interface estimates, including the universal two-sided power-law bound $c(t/\log t)^{1/2}\le b(t)\le C(t/\log t)^{1/2}$, based solely on the local monotonicity condition. Section 9 proves the unconditional algebraic interface law $c_Ut^{\beta_N}\le b(t)\le C_Ut^{\beta_N}$ for transition orbits converging to a ground state in dimensions $N\ge3$, using explicit Barenblatt-type barriers.
\subsection{Assumptions and main results}

\begin{assumption}\label{ass:reaction}
There exists $\theta\in(0,1)$ such that:
\begin{enumerate}[label=(F\arabic*)]
\item $f$ is locally Lipschitz on $[0,\infty)$, belongs to $C^1([\theta,\infty))\cap C^2((\theta,\infty))$, and
\[
 f=0\ \hbox{on }[0,\theta],\quad f>0\ \hbox{on }(\theta,1),\quad f(1)=0,\quad f<0\ \hbox{on }(1,\infty),
\]
with $f'_+(\theta)\ge0$ and $f'(1)<0$.
\item There is $\delta_\theta>0$ such that $f$ is nondecreasing on $[\theta,\theta+\delta_\theta]$.
\item If $N\ge3$, for each $q\in(\theta,1)$ the radial solution $W_q$ of
\[
 W_q''+\frac{N-1}{r}W_q'+f(W_q^{1/m})=0,\quad W_q(0)=q^m,\quad W_q'(0)=0,
\]
reaches $\theta^m$ at a finite first radius $R(q)$, and, with $\alpha(q)=-W_q'(R(q))$,
\[
 R(q)\alpha(q)\longrightarrow0\qquad(q\downarrow\theta).
\]
\end{enumerate}
\end{assumption}

 The initial datum $u_0$ is chosen from the following set:
\begin{equation}\label{Initial-data}
u_0\in \mathscr{X}:= \left\{ \phi \in C([0,\infty))
\left|
\begin{array}{l}
 \phi^{m-1}\in C^1([0,r_0]),\ \phi(r)>0 \mbox{ for } r\in [0,r_0),
 \\
 \phi(r)=0 \mbox{ for } r\geq r_0\mbox{ and } (\phi^{m-1})'(r_0)<0.
 \end{array}
 \right.
 \right\}.
\end{equation}

%For an ordered family $\{\phi_\sigma\}_{\sigma>0}\subset X_{\rm rad}$ we assume continuity in $L^1\cap L^\infty$, strict order for distinct parameters, and $\|\phi_\sigma\|_\infty\to0$ as $\sigma\downarrow0$. We allow the family to have no spreading member; then the threshold is $+\infty$.

\begin{theorem}\label{thm:N2}
Suppose $N=2$ and Assumption \ref{ass:reaction} holds. Let $\phi\in \mathscr{X}$ and let $u(t,r;\sigma \phi)$ be the global-in-time solution of \eqref{eq:PDE} with initial datum $u_0=\sigma \phi$. 
%Then every bounded radial solution of  with initial datum in $X_{\rm rad}$ converges locally uniformly, as $t\to\infty$, to exactly one of
%\[
% 0,\qquad \theta,\qquad 1.
%\]
%If the limit is $0$, the convergence is uniform in $\R^2$.
%
%Let $\{\phi_\sigma\}_{\sigma>0}\subset X_{\rm rad}$ be an ordered family satisfying the continuity, strict-order, and small-data assumptions stated above. 
Then there exists a unique
\[
 \sigma_*\in(0,\infty]
\]
such that the following alternatives hold:
\[
\begin{cases}
 u_\sigma(\cdot,t)\longrightarrow0 & \text{uniformly in $\R^2$, if }0<\sigma<\sigma_*,\\[1mm]
 u_{\sigma_*}(\cdot,t)\longrightarrow\theta & \text{locally uniformly in $\R^2$,}\\[1mm]
 u_\sigma(\cdot,t)\longrightarrow1 & \text{locally uniformly in $\R^2$, if }\sigma>\sigma_*.
\end{cases}
\]
If $\sigma_*=\infty$, every finite member of the family vanishes uniformly.
\end{theorem}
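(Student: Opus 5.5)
The proof has four steps. First we reduce to stationary limits. Then we show the vanishing and spreading parameter sets are open intervals. Next we identify the limit at the threshold. Finally we prove that the threshold parameter is unique.

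\emph{Step 1 (limits).} By the quasiconvergence result of Lou and Zhou, recalled in Section 4, every bounded radial orbit has a nonempty, compact, connected $\omega$-limit set consisting of radial stationary solutions on $\R^2$. Boundedness holds because $f<0$ on $(1,\infty)$: comparison with the ODE solution $\max\{\|\sigma\phi\|_\infty,1\}$ gives a global bound. Section 3 classifies the radial stationary states. For $N=2$ there are no ground states: a nonnegative radial solution of $\Delta U^m+f(U)=0$ that is not constant would have to drop below $\theta$ on an exterior region. There $U^m$ is harmonic, hence of the form $a+b\log r$, which is incompatible with $U\ge0$ and decay unless $U$ is constant. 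The same argument excludes compactly supported states, via Darcy and free-boundary matching. So the only candidates are the constants $c\in[0,\theta]\cup\{1\}$. Since the $\omega$-limit set is connected, it is either $\{1\}$ or a connected subset of $[0,\theta]$. We then use the zero-number theorem to compare $u$ with the constant $c$ and show that $u(t,0)$ has a limit, so the $\omega$-limit set is a single constant $c\in[0,\theta]\cup\{1\}$.

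\emph{Step 2 (threshold interval).} Define $\Sigma_0=\{\sigma:u_\sigma\to0\ \text{uniformly}\}$ and $\Sigma_1=\{\sigma:u_\sigma\to1\}$.
\begin{itemize}
\item For small $\sigma$, $\|\sigma\phi\|_\infty<\theta$, so the equation reduces to the pure PME. Then $\|u_\sigma\|_\infty\le C t^{-N/(N(m-1)+2)}\to0$ by Barenblatt comparison, which gives $(0,\varepsilon)\subset\Sigma_0$.
\item $\Sigma_0$ is open: once $\|u_\sigma(T)\|_\infty<\theta$, continuous dependence in $L^\infty$ keeps nearby parameters below $\theta$ at time $T$.
\item $\Sigma_1$ is open: convergence to $1$ gives $u_\sigma(T)>W$ for a compactly supported spreading subsolution $W$, and this is stable under perturbation.
\item Both sets are intervals by the comparison principle and monotonicity in $\sigma$, since $\sigma\phi$ is increasing in $\sigma$.
\end{itemize}
If $\Sigma_1=\emptyset$, set $\sigma_*=\infty$. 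Otherwise $\sigma_*:=\sup\Sigma_0\le\sigma^*:=\inf\Sigma_1$. Each $\sigma\in[\sigma_*,\sigma^*]$ lies in neither set. By Step 1 its limit is then a constant $c\in[0,\theta]$ attained only locally uniformly; if $c=0$ we must still rule out that the convergence is uniform.

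\emph{Step 3 (the transition limit is $\theta$).} Suppose a transition solution converges to $c\in[0,\theta)$ locally uniformly. Then eventually $u<\theta'<\theta$ on large balls. On the whole plane, the region where $u\ge\theta$ must disappear: otherwise zero-number comparison with the family of stationary-like subsolutions, built from (F2) near $\theta$, would force spreading. Once $u<\theta$ everywhere, $u$ solves the pure PME, and then $u\to0$ uniformly. This contradicts transition. Hence $c=\theta$.

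\emph{Step 4 (uniqueness, the main obstacle).} We must show $\sigma_*=\sigma^*$. Suppose $\sigma_1<\sigma_2$ both converge to $\theta$, with $u_1<u_2$ by strict comparison and positivity inside the support. The first idea is the rescaled comparison suggested in the introduction. Consider $\tilde u(t,x)=u_2(t+\tau,\lambda x)$ with $\lambda>1$ close to $1$. This is not exactly a solution, but the dilation $x\mapsto\lambda x$ gives a supersolution/subsolution relation because $f\ge0$. We aim to show that $u_1(T+\cdot)$ lies below a slightly contracted copy of $u_2$, forcing $u_2$ to dominate a solution strictly above $\theta$ near the origin at some large time.

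The 2D logarithmic capacity is what makes this work: near $\theta$, the rescaled deviation $u-\theta$ behaves like the heat equation with diffusivity $m\theta^{m-1}$, and in two dimensions the mass surplus $\int(u_2-u_1)\,dx>0$ does not disperse enough to keep $u_2-\theta$ below zero. So I would prove a lemma of the following form: if $u_2\ge u_1+\eta\chi_{B_R}$ at some time while both are close to $\theta$ on $B_{R'}$, then $u_2$ exceeds $\theta+\delta$ on a ball of radius larger than the $R(q)$ of Assumption (F3)/(F2). That ball then supports a spreading subsolution, so $u_2\to1$, which is a contradiction.

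Making this quantitative, by controlling the degenerate free-boundary region so that it does not absorb the surplus, is where I expect the real difficulty. I would try heat-kernel lower bounds with a $\log t$ dispersion rate, combined with zero-number control of the intersections between $u_2$ and $\theta$.
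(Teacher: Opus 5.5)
\textbf{The gap is in Step 4, which is the core of the theorem.} Your comparison function $\tilde u(t,x)=u_2(t+\tau,\lambda x)$ satisfies $\tilde u_t=\lambda^{-2}\Delta\tilde u^m+f(\tilde u)$. Hence $\tilde u_t-\Delta\tilde u^m-f(\tilde u)=(\lambda^{-2}-1)\Delta\tilde u^m$ has no sign, and $\tilde u$ is neither a sub- nor a supersolution; the sign of $f$ does not help. The parabolic dilation $u_1(\lambda^2t,\lambda x)$ with $\lambda<1$ is a genuine subsolution while $0\le u_1\le1$, since it solves $w_t=\Delta w^m+\lambda^2 f(w)$. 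But it converges to the same constant $\theta$, so comparing it with $u_2$ only gives $\theta\ge\theta$. No spatial dilation can separate two orbits with the same constant limit.

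Your fallback lemma (a surplus $\eta\chi_{B_R}$ forces $u_2>\theta+\delta$ on a large ball, hence spreading) is essentially the desired conclusion restated, and it is left unproved. The heat-kernel heuristic behind it also does not deliver it under the theorem's hypotheses. (F2) only gives monotonicity of $f$ near $\theta$, so $f$ may vanish to arbitrarily high order there. At the level of the linearization $\eta_t=m\theta^{m-1}\Delta\eta$, a finite surplus in $\R^2$ then decays pointwise like $t^{-1}$ and produces no fixed excess $\delta$. Also, (F3) is assumed only for $N\ge3$.

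\textbf{The missing idea is an amplitude rescaling} that exploits the $m$-homogeneity of $\Delta u^m$ together with (F2). For $A>1$,
\[
 w(s,x)=A\,u_1\bigl(T+A^{-1}s,\,A^{-m/2}x\bigr)\quad\text{solves}\quad w_s=\Delta w^m+f(w/A).
\]
The argument then runs as follows.

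\emph{Global bound.} Fix $\delta\in(0,\min\{\delta_\theta,1-\theta\})$. Since $u_1\to\theta$ locally and $u_1$ is radially decreasing outside the initial support, $\|u_1(t,\cdot)\|_\infty\le\theta+\delta/4$ for $t\ge T$.

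\emph{Subsolution.} Take $A$ with $A(\theta+\delta/4)<\theta+\delta$. Whenever $w>\theta$, either $w/A\le\theta$, or both $w/A$ and $w$ lie in $[\theta,\theta+\delta]$. In both cases $f(w/A)\le f(w)$, so $w$ is a subsolution.

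\emph{Initial ordering.} The strict ordering $u_1<u_2$ on $B_{b_1(T)}$, with $b_1(T)<b_2(T)$, gives by comparing pressures $u_2(T+\tau,\cdot)\ge A\,u_1(T,A^{-m/2}\,\cdot)$ for some $A>1$ close to $1$ and some $\tau>0$.

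\emph{Contradiction.} Comparison gives $u_2(T+\tau+s,x)\ge w(s,x)$. Letting $s\to\infty$ at fixed $x$ yields $\theta\ge A\theta$, which is impossible.

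This argument is dimension-free. The dimension $N=2$ enters only through the absence of ground states, which forces every transition limit to be $\theta$; logarithmic capacity plays no role.

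A smaller point concerns Step 3. The zero-number/subsolution argument you sketch there is unnecessary and not justified. The reflection bound $u_r<0$ for $r>R_0$ gives $\sup_{\R^2}u(t,\cdot)=\sup_{B_{R_0}}u(t,\cdot)$. So local convergence to $c<\theta$ already places $u$ below $\theta$ everywhere. This also shows that local convergence to $0$ is automatically uniform, which is what your remark about $c=0$ actually needs.
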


For $N\ge3$, let $\cS$ be the set of positive radial stationary solutions satisfying
\[
 \Delta U^m+f(U)=0,\quad U_r<0,\quad U(0)\in(\theta,1),\quad U(r)\to0\mbox{ as }r\rightarrow \infty,
\]
and define $\Upsilon=\{U(0):U\in\cS\}$.  We assume
\begin{equation}\label{eq:TD}
 \Upsilon\ \hbox{is totally disconnected}.
\end{equation}
Condition (F3) in Assumption \ref{ass:reaction} is imposed to exclude ground states above $\theta$ of the type defined above.
\begin{theorem}\label{thm:Nge3}
Suppose $N\ge3$, and that Assumption \ref{ass:reaction} and the total-disconnectedness condition \eqref{eq:TD} hold. Let $\phi\in \mathscr{X}$ and let $u(t,r;\sigma \phi)$ be the global-in-time solution of \eqref{eq:PDE} with initial datum $u_0=\sigma \phi$. Then there exists a unique
\[
 \sigma_*\in(0,\infty]
\]
such that the following alternatives hold:
\[
\begin{cases}
 u_\sigma(\cdot,t)\longrightarrow0 & \text{uniformly in $\R^N$, if }0<\sigma<\sigma_*,\\[1mm]
 u_{\sigma_*}(\cdot,t)\longrightarrow U_* & \text{uniformly in $\R^N$,}\\[1mm]
 u_\sigma(\cdot,t)\longrightarrow1 & \text{locally uniformly in $\R^N$, if }\sigma>\sigma_*.
\end{cases}
\]
where $U_*\in\cS$. No uniqueness of the elements of $\cS$ is assumed: the role of \eqref{eq:TD} is only to reduce the connected $\omega$-limit set of an individual orbit to a single stationary profile.
\end{theorem}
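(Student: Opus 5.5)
We follow the standard threshold scheme of Du--Matano, adapted to the degenerate setting using the tools announced in the introduction. Set
\[
\Sigma_0=\{\sigma>0: u_\sigma\to0 \text{ uniformly}\},\qquad \Sigma_1=\{\sigma>0: u_\sigma\to1 \text{ locally uniformly}\}.
\]

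\textbf{Openness and ordering.} Since $\sigma\phi$ is strictly increasing in $\sigma$, comparison for the porous medium equation shows that both sets are intervals: $\Sigma_0=(0,\sigma_*)$ and $\Sigma_1\subset(\sigma^*,\infty)$. Small data vanish, because a small $\sigma\phi$ lies below $\theta$; then $f(u)=0$ and the solution spreads out as a PME solution with $\|u\|_\infty\to0$. Hence $\Sigma_0\neq\emptyset$.

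$\Sigma_0$ is open. If $\|u_\sigma(\cdot,T)\|_\infty<\theta/2$ with support in a ball, continuous dependence in $L^\infty$ on compact time intervals gives the same for nearby $\sigma$, and then pure PME decay applies. $\Sigma_1$ is open because spreading is triggered once $u(\cdot,T)$ lies above a compactly supported subsolution that spreads, for instance one built from a large-radius Dirichlet stationary state of $\Delta w^m+f(w)=0$ with value close to $1$. That property is stable under perturbation of $\sigma$. If $\sigma_*=\infty$ we are done; otherwise define $\sigma^*=\inf\Sigma_1\ge\sigma_*$.

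\textbf{Limit of transition solutions.} For $\sigma\in[\sigma_*,\sigma^*]$ (with $\sigma\le\sigma^*<\infty$), the solution $u_\sigma$ is bounded by $\max(1,\sigma\|\phi\|_\infty)$. By the Lou--Zhou quasiconvergence theorem, its $\omega$-limit set is compact, connected, and consists of radial stationary solutions. By the classification of Section 3, these stationary solutions are $0$, $\theta$, $1$ or elements of $\cS$; profiles crossing zero or sign-changing states are excluded by positivity and decay.

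$1$ is excluded: if the $\omega$-limit set contained $1$, then $u$ would lie above the spreading subsolution at some time, so $\sigma\in\Sigma_1$. $0$ is excluded similarly: near $0$ locally, combined with the uniform bound and finite propagation (the support stays in a ball as long as $u<\theta$), it would give $\sigma\in\Sigma_0$. Some care is needed here, since the definition of $\omega$-limit involves local convergence; we would use the zero-number argument against small Barenblatt-type solutions to upgrade to uniform smallness.

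Section 6 excludes $\theta$ for $N\ge3$. So the $\omega$-limit set is a connected subset of $\cS$. Since distinct ground states are non-comparable and are parametrized by their central value, the map $U\mapsto U(0)$ is injective and continuous. The image of the limit set is therefore a connected subset of $\Upsilon$, which is a single point by \eqref{eq:TD}. Hence $u_\sigma\to U_*$ locally uniformly. Uniform convergence in $\R^N$ follows from the tail: $U_*\to0$ at infinity, and $u_\sigma$ stays below a suitable supersolution built from ground states or Barenblatt barriers (Section 9 type).

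\textbf{Uniqueness ($\sigma_*=\sigma^*$) — the main obstacle.} Suppose $\sigma_*<\sigma_1<\sigma_2\le\sigma^*$. Then $u_{\sigma_1}<u_{\sigma_2}$, converging to $U_1,U_2\in\cS$. If $U_1\neq U_2$, they intersect by non-comparability. But the strict ordering $u_{\sigma_1}<u_{\sigma_2}$ passes to the limit as $U_1\le U_2$, and the strong maximum principle on the positive set gives a contradiction.

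So $U_1=U_2=U$. We then use the scaled-and-shifted comparison indicated in the introduction. By strict order and continuity at time $1$, there is $\lambda>1$ close to $1$ with $u_{\sigma_2}(\cdot,1)\ge$ the rescaled function $\lambda^{\frac{2}{m-1}}\cdots$. More concretely, we compare $u_{\sigma_2}$ with the rotation-free dilation $v(t,x)=u_{\sigma_1}(t,x/\mu)$ for $\mu>1$ slightly larger than $1$, adjusted by time shifts.

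The cleaner route is the zero-number theorem. The number of intersections between $u_{\sigma_2}(t,\cdot)$ and the dilated stationary profile $U(\cdot/\mu)$ is finite and nonincreasing. However, $u_{\sigma_1}$ and $u_{\sigma_2}$ both converge to $U$ while each must cross $U_\mu$ in a way that forces the ordering $u_{\sigma_1}\ge u_{\sigma_2}$ somewhere for large $t$. This is the contradiction.

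I expect this degenerate uniqueness step, handling intersections at the free boundary, to be the hardest. First I would try a Jacobi-field argument: $w=\partial_\sigma u_\sigma>0$ solves the linearized equation, and integrating against the decaying positive kernel $\partial_\mu U_\mu$ yields a contradiction, using the degenerate zero-number theorem to control the tail.
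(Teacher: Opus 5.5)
Everything up to the uniqueness step follows the paper's route (Lemma~\ref{lem:open}, Theorem~\ref{thm:conv}, Section~\ref{sec:notheta-normalized}). The step $\sigma_-=\sigma_+$, which you rightly call the crux, is not proved. Your reduction to $U_1=U_2=U$ via Proposition~\ref{prop:stationary} is fine, but none of your routes then closes:
\begin{itemize}
\item The dilation $v(t,x)=u_{\sigma_1}(t,x/\mu)$ satisfies $v_t-\Delta v^m-f(v)=(\mu^2-1)\Delta v^m$, which changes sign. So $v$ is neither a sub- nor a supersolution, and time shifts do not change this.
\item The amplitude scaling $w=A\,u_{\sigma_1}(A^{-m/2}x,T+s/A)$ solves $w_s=\Delta w^m+f(w/A)$. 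It is a subsolution only where $f(w/A)\le f(w)$, i.e.\ where $f$ is nondecreasing on the range of $U$. This fails in general, since $U(0)\in(\theta,1)$ is arbitrary and $f$ must decrease before $f(1)=0$. That is why the paper uses this device only for $N=2$, where the limit is $\theta$ (Lemma~\ref{lem:thetaSharp}).
\item $U(\cdot/\mu)$ is not stationary ($\Delta U_\mu^m=-\mu^{-2}f(U_\mu)$), so Theorem~\ref{thm:radialzero} gives no control on $u-U(\cdot/\mu)$. The ``forced crossing'' is asserted without a mechanism.
\item The Jacobi-field idea is an instability argument for $U$ in disguise. $\partial_\mu U_\mu$ is not in the kernel of the linearization, and differentiability in $\sigma$ at the free boundary is unproved. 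This is exactly the spectral route that the degeneracy $U^{m-1}\to0$ blocks.
\end{itemize}

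The missing idea is a \emph{spatial translation}, which, unlike dilation, is a symmetry of \eqref{eq:PDE}.
\begin{itemize}
\item The data $\sigma_1\phi<\sigma_2\phi$ share the support $[0,r_0]$ and $(\phi^{m-1})'(r_0)<0$. So Darcy's law \eqref{Darcy} gives $b_1(\gamma)<b_2(\gamma)$ at a small time $\gamma>0$, while $u_1(\cdot,\gamma)<u_2(\cdot,\gamma)$ on $\overline{B_{b_1(\gamma)}}$ with a positive margin.
\item Hence $u_1(\cdot+\varepsilon e_1,\gamma)\le u_2(\cdot,\gamma)$ for some $\varepsilon>0$. Comparison propagates this to all $t\ge\gamma$, and in the limit $U_1(\cdot+\varepsilon e_1)\le U_2$.
\item Radial symmetry and $U_r<0$ then give $U_1<U_2$ everywhere. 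For $r<\varepsilon$, evaluating at $x=-\varepsilon e_1$ gives $U_1(r)\le U_1(0)\le U_2(\varepsilon)<U_2(r)$. For $r\ge\varepsilon$, evaluating at $x=-re_1$ gives $U_1(r)<U_1(r-\varepsilon)\le U_2(r)$.
\item This contradicts Proposition~\ref{prop:stationary}. In particular it rules out $U_1=U_2$ directly, with no instability input; this is Proposition~\ref{prop:uniqueN}.
\end{itemize}

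Three smaller points:
\begin{itemize}
\item Your list of possible limits omits constants in $(0,\theta)$ and radial profiles whose harmonic tail tends to a positive sub-ignition constant. Section~3 does not exclude these; they need Lemma~\ref{lem:excludeplateau} (comparison with $\theta$, and the energy bound of Proposition~\ref{prop:energy}).
\item Apply \eqref{eq:TD} to $\{0,\theta,1\}\cup\Upsilon$ to obtain convergence \emph{before} invoking Section~\ref{sec:notheta-normalized}, which argues under full convergence to $\theta$.
\item The outer monotonicity \eqref{eq:outermono} gives both the upgrade from local to uniform smallness and the uniform convergence to $U_*$ (Proposition~\ref{prop:uniform}). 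No Barenblatt or zero-number argument is needed there.
\end{itemize}
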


Having established the threshold classification, we now turn to the asymptotic motion of the radial free boundary.  In the subcritical regime $0<\sigma<\sigma_*$, the solution eventually remains below the ignition temperature and thereafter evolves according to the pure porous medium equation.  In the supercritical regime $\sigma>\sigma_*$, the corresponding spreading estimates follow from the method of \cite{DuQuirosZhou2020}.  It therefore remains to analyze the critical orbit $\sigma=\sigma_*$.

\begin{proposition}
\label{prop:interfaceintro}
For each of the following assertions, assume that the threshold parameter
$\sigma_*$ provided by the corresponding classification theorem is finite.
Let
\[
 u_*(t,r):=u(t,r;\sigma_*\phi)
\]
denote the critical solution corresponding to $\sigma=\sigma_*$, and let
$b_*(t)$ be its radial free-boundary radius, defined by
\[
 \{u_*(\cdot,t)>0\}=B_{b_*(t)}.
\]
Then the following conclusions hold.

\begin{enumerate}[label=(\roman*)]
\item Suppose that $N=2$ and
\[
 f(\theta+s)=\lambda s^p[1+o(1)]
 \qquad\text{as }s\downarrow0,
\]
for some $\lambda>0$ and $p\ge1$.  Then there exist constants
$c,C>0$ and $T>0$ such that
\[
 c\frac{\sqrt t}{(\log t)^{\frac{m-1}{2m}}}
 \le b_*(t)\le
 C\frac{\sqrt t}{(\log t)^{\frac{m-1}{2m}}},
 \qquad t\ge T.
\]

\item Suppose that $N\ge3$, and let $U_*\in\cS$ denote the stationary
state to which the critical solution $u_*$ converges, as specified in
Theorem~\ref{thm:Nge3}.  Then there exist constants $c,C>0$ and $T>0$
such that
\[
 ct^{\beta_N}\le b_*(t)\le Ct^{\beta_N},
 \qquad t\ge T,
\]
where
\[
 \beta_N=\frac{m}{N(m-1)+2}.
\]
\end{enumerate}
\end{proposition}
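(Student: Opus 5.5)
The plan is to compare the critical orbit $u_*$ with explicit sub- and supersolutions of the pure porous medium equation in an exterior region. In that region $u_*<\theta$, so $f(u_*)\equiv0$. The barriers are chosen so that their inner boundary data match the known limit: $\theta$ when $N=2$, and $U_*$ when $N\ge3$, which comes from Theorems~\ref{thm:N2} and~\ref{thm:Nge3}. Order at the inner boundary is obtained for large $t$ from the convergence of $u_*$. The initial ordering is arranged by a time shift, together with the compact support of $u_*(\cdot,T_0)$ and the fact that the barrier has large support. Comparison for the degenerate problem on an exterior domain with Dirichlet data then gives the bounds. We read off $b_*(t)$ by comparing supports, and Darcy's law is used only to confirm that the barriers' free boundaries move correctly.

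\emph{Case $N\ge3$.} Outside the set where $U_*>\theta$ we have $\Delta U_*^m=0$, so $U_*^m(r)=a r^{2-N}$ for $r\ge R_1$, with $a>0$. We look for solutions of $u_t=\Delta u^m$ of the self-similar form $t^{-\alpha}F(r/t^{\beta})$ whose inner behaviour is $F(\xi)^m\sim a\xi^{2-N}$ as $\xi\to0$. Matching forces $\alpha=\beta(N-2)/m$, and the PME scaling forces $\alpha(m-1)+2\beta=1$. Together these give $\beta=\beta_N=m/(N(m-1)+2)$. I will obtain $F$ by a shooting argument in the ODE
\[
(F^m)''+\tfrac{N-1}{\xi}(F^m)'+\beta\xi F'+\alpha F=0,
\]
written in pressure variables. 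Starting from the front $\xi_0$ with the Darcy slope $(F^{m-1})'(\xi_0)=-\tfrac{m-1}{m}\beta\xi_0$, the profile is integrated inward, and by monotonicity in $\xi_0$ one can tune the singular coefficient to any prescribed $a_\pm=(1\pm\varepsilon)a$. The resulting $\overline u,\underline u$ satisfy $\overline u^m\ge U_*^m+\varepsilon'$ and $\underline u^m\le U_*^m-\varepsilon'$ on $r=R$ for $t$ large; the shifts are absorbed by a slight change of $a_\pm$. Because $u_*\to U_*$ uniformly, the inner data are ordered. Comparison on $\{r\ge R\}$ then gives $\xi_0^-t^{\beta_N}\lesssim b_*(t)\lesssim\xi_0^+t^{\beta_N}$. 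The main technical point is the existence and monotone dependence of the singular profile $F$.

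\emph{Case $N=2$.} Here $u_*\to\theta$ locally uniformly. The reaction is active only where $u_*>\theta$, and the hypothesis $f(\theta+s)\sim\lambda s^p$ is used to show that the core region $\{u_*>\theta\}$ supplies only a controlled flux. Concretely, the excess $(u_*-\theta)_+$ solves an essentially heat-like equation, as in the rescaling of Section~6, and its integrated contribution is shown to be lower order. So at leading order the problem becomes the PME outside a slowly growing core $\rho(t)$ on which $u\approx\theta$. In the intermediate zone the quasi-static profile is $u^m\approx\theta^m\log(b/r)/L$ with $L=\log(b/\rho)\asymp\log t$. Hence $u\approx\theta(\log(b/r)/L)^{1/m}$, the mass in the outer zone is $\asymp b^2L^{-1/m}$, and the flux through the core is $\asymp\theta^m/L$. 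Balancing these gives
\[
\frac{d}{dt}\bigl(b^2L^{-1/m}\bigr)\asymp L^{-1},
\]
and therefore $b^2\asymp t\,(\log t)^{-(m-1)/m}$, which is the claimed rate.

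To make this rigorous, I will build barriers $\underline w,\overline w$ for $w=u^m$ on $r\ge R_0$. Each has a logarithmic profile joined to a Barenblatt-type compactly supported cap near $r=b_\pm(t)$. The ODE for $b_\pm$ is dictated by the balance above, with constants $c_\pm$ and $\theta\pm\varepsilon$ at $r=R_0$. The supersolution and subsolution inequalities have to be checked in both pieces and across the junction, with the kink oriented correctly. I expect this two-dimensional construction to be the main obstacle: it needs a careful choice of the junction radius, of order $b/\log$-type scale, and it needs the bound on the core flux coming from $f$.
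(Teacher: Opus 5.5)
\textbf{Part (i), $N=2$: the upper bound has a genuine gap.} Your lower bound is fine and is the paper's argument. Since $f\ge0$ below $1$, $u_*$ is a supersolution of the pure porous medium equation, so it dominates a subsolution on the fixed exterior $\{r\ge R_0\}$ with inner value $\theta-\varepsilon$.

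The upper bound is where the proposal breaks. A pure-PME supersolution on $\{r\ge R_0\}$ can be compared with $u_*$ only where $f(u_*)=0$. That requires the ignition core $r_\theta(t)=\sup\{r:u_*(r,t)\ge\theta\}$ to stay inside $B_{R_0}$. Locally uniform convergence to $\theta$ says nothing about this: $u_*-\theta$ could be positive but small on balls of radius comparable to $b_*(t)$, and the reaction would then change the rate.

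Your proposed mechanism does not supply this control. The rescaling of Section~\ref{sec:notheta-normalized} only yields limits on compact sets. For $p=1$ its limit equation $\eta_t=\Delta\eta+f'(\theta)\eta$ amplifies positive excess on large balls, so there is no a priori reason for the supra-ignition part to be lower order. Moreover, the relevant quantity is not the flux supplied by the core. Beyond the core the boundary value seen by an exterior supersolution is below $\theta$ by definition. What matters is the \emph{position} of the core on a logarithmic scale, $\log r_\theta(t)\le\gamma\log b_*(t)$ with $\gamma<1$. Your balance heuristic assumes exactly this when it sets $L=\log(b/\rho)\asymp\log t$.

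\textbf{How the paper obtains the core control.} It uses stationary caps, not a heat-type estimate.
\begin{itemize}
\item The hypothesis $f(\theta+s)\sim\lambda s^p$ enters through the Lane--Emden limit $Y''+Y'/s+Y^p=0$ for the caps $Q_a$ with $Q_a(0)=\theta+a$.
\item This gives $\ell(a)\asymp A(a)^{-(p-1)/2}$, $q(a)\sim\kappa_pA(a)$, and $L(a)=\ell(a)\exp(\theta^m/q(a))$. Hence $\log\ell(a)=o(\log L(a))$.
\item Intersection comparison of $u_*(\cdot,t)$ with a selected cap $Q_{a(t)}$ (Lemma~\ref{lem:capselection2}) gives $r_\theta(t)\le\ell(a)$ and $L(a)\le b_*(t)$.
\item Consequently $r_\theta$ is bounded if $p=1$, and $r_\theta(t)\le b_*(t)^\gamma$ for every $\gamma>0$ if $p>1$ (Lemma~\ref{lem:coreseparation2}).
\end{itemize}
Even with this input, your fixed inner radius $R_0$ fails for the upper bound when $p>1$, because the core is not known to be bounded. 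The paper instead uses the expanding domain $r>d(t)=D(1+t)^\sigma$ with $\gamma/2<\sigma<1/2$. It compares with the rescaled exterior solution $Z(r/d(t),s(t))$ and bootstraps $r_\theta(t)<d(t)$. Your hand-built logarithmic-plus-cap barriers would at best reproduce the fixed-exterior estimate that the paper imports from Quir\'os--V\'azquez (Lemma~\ref{lem:QVscale}). They do not replace the core control.

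\textbf{Part (ii), $N\ge3$: your route is sound and differs from the paper's.} Theorem~\ref{thm:high-dim-power-law} uses Barenblatt-type profiles that are bounded at $y=0$. At fixed $r$ these decay like $(t+\tau)^{-a_N}$, so they cannot stay above $u_*(R,t)\to U_*(R)>0$. Your singular profiles with $F^m\sim a_\pm\xi^{2-N}$ are precisely what keep the ordering at $r=R$ valid for all large $t$. Two adjustments are needed.
\begin{itemize}
\item Monotone dependence on $\xi_0$ is unnecessary. The map $F\mapsto k^{1-(m-1)\alpha}F(k^{-(m-1)\beta}\xi)$ sends profiles to profiles and multiplies $a$ by $k^m$.
\item Do not insist on $a_\pm=(1\pm\varepsilon)a$. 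Uniform convergence does not control $u_*^m(r,T_0)r^{N-2}$ where $U_*$ is small. Take $a_+$ large and $a_-$ small, and start the subsolution before its support reaches $r=R$. This only changes the constants.
\end{itemize}
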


%\begin{proposition}\label{prop:interfaceintro}
%
%
%%\item A regular two-dimensional pure-PME radial pressure similarity profile satisfying
%%\[
%% P(0)=\frac{m}{m-1}\theta^{m-1},\qquad P'(0)=0,
%%\]
%%and having a finite first zero cannot be positive and strictly decreasing.
%(1) Let $N=2$, we have 
%\[
% b(t)=2\sqrt{\Theta}\left(\frac{t}{\log t}\right)^{1/2}[1+o(1)],
% \quad
% \mbox{where }\Theta=\frac{m}{m-1}\theta^{m-1}.
%\]
%
%(2) If $N\ge3$, then 
%\[
% c_Ut^{\beta_N}\le b(t)\le C_Ut^{\beta_N}\quad(t\ge T),
%\]
%where $T$ is large, $\beta_N=\frac{m}{N(m-1)+2}$ and $c_U,C_U>0$.
%%and suppose a transition orbit converges to $U\in\cS$. The exact harmonic tail of $U^m$ selects the similarity exponent
%%\[
%% \beta_N=\frac{m}{N(m-1)+2}.
%%\]
%%Under the explicit separated-tail self-similar barrier hypothesis of Section \ref{sec:conditionalrate}, there exist constants $c_U,C_U,T>0$ such that
%%\[
%% c_Ut^{\beta_N}\le b(t)\le C_Ut^{\beta_N}\qquad(t\ge T).
%%\]
%%Without that barrier hypothesis, $\beta_N$ is a rigorously derived scaling candidate, not an unconditional interface theorem.
%
%\end{proposition}

\section{Well-posedness, pressure, and free boundaries}
For initial data in $\mathscr{X}$, it is well known that the solution develops a free boundary $r=b(t)$ that satisfies Darcy's law.
Set
\[
 v=\frac{m}{m-1}u^{m-1}.
\]
In the positive phase, $v$ solves
\begin{equation}\label{eq:pressure}
 v_t=(m-1)v\Delta v+|\nabla v|^2+g(v),
\end{equation}
where, with $\Theta=\frac{m}{m-1}\theta^{m-1}$,
\[
 g(v)=
 \begin{cases}
 0,&0\le v\le\Theta,\\[1mm]
 m\left(\dfrac{m-1}{m}v\right)^{\frac{m-2}{m-1}}
 f\!\left(\left(\dfrac{m-1}{m}v\right)^{\frac1{m-1}}\right),&v>\Theta.
 \end{cases}
\]
%This convention is important when $1<m<2$: although the displayed power is singular at $v=0$, the reaction vanishes identically throughout the whole sub-ignition pressure interval, so the pressure source is exactly zero there. 
%For radial data the support is a ball $\overline{B_{b(t)}}$ after any waiting time, and the free boundary satisfies Darcy's law
%\begin{equation}\label{eq:Darcy}
% b'(t)=-v_r(b(t)-,t).
%\end{equation}
%We use the standard theory of weak solutions of the porous medium equation with Lipschitz reaction, together with local H\"older estimates and the Aronson--B\'enilan-type lower bound for $\Delta v$.

By an argument similar to that in \cite{LouZhou2024}, we obtain the following result.

\begin{proposition}\label{prop:basic}
If $u_0\in \mathscr{X}$, there is a unique bounded nonnegative solution $u$ of \eqref{eq:PDE}. It has finite propagation speed, preserves radial symmetry, and is smooth in its positive phase. The maximum principle gives
\[
 0\le u(x,t)\le \max\{1,\|u_0\|_\infty\}.
\]
Let $b(t)$ denote its free-boundary radius, so that $B_{b(t)}:=\{u(\cdot,t)>0\}$ for $t>0$. Then $b\in C^1((0,+\infty))$ and Darcy's law holds:
\begin{equation}\label{Darcy}
 b'(t)=-v_r(b(t)-,t)
\end{equation}
If the initial support is contained in $B_{R_0}$, radial reflection gives
\begin{equation}\label{eq:outermono}
 u_r(r,t)<0\mbox{ for }b(t)>r>R_0,t>0.
\end{equation}

\end{proposition}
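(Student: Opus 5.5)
We follow the strategy of \cite{LouZhou2024} and treat the four assertions in turn: existence/uniqueness with the bound, finite speed and symmetry, regularity of $b$ with Darcy's law, and outer monotonicity.

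\textbf{Existence, uniqueness and the $L^\infty$ bound.} Since $f$ is locally Lipschitz on $[0,\infty)$ and $f(0)=0$, we first truncate $f$ outside $[0,M]$ with $M=\max\{1,\|u_0\|_\infty\}$, making it globally Lipschitz. Existence of a weak (energy) solution then follows from the standard theory for $u_t=\Delta u^m+f(u)$: approximate $u_0$ by positive smooth data $u_0+\varepsilon$ on large balls, solve the resulting uniformly parabolic problems, and pass to the limit $\varepsilon\to0$ using monotone convergence together with local Hölder estimates (DiBenedetto). Uniqueness comes from the $L^1$-contraction for weak solutions, with a Gronwall correction coming from the Lipschitz reaction.

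For the bound we compare with constants. The constant $0$ is a solution. The constant $M$ is a supersolution, because $f(M)\le0$ by (F1). The comparison principle then gives $0\le u\le M$, and this a posteriori removes the truncation.

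\textbf{Finite speed and radial symmetry.} Radial symmetry follows from uniqueness together with the rotation invariance of the equation. For finite propagation, note that $f(u)\le Lu$ with $L$ the Lipschitz constant. Hence $e^{Lt}$ times a rescaled Barenblatt solution, i.e. a solution of $w_t=\Delta w^m$ after the change of time $\tau=(e^{(m-1)Lt}-1)/((m-1)L)$, is a supersolution. Its support is compact, so the support of $u$ stays bounded on finite time intervals.

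Smoothness in the positive phase comes from interior parabolic regularity, since the equation is nondegenerate where $u>0$.

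\textbf{The free boundary and Darcy's law.} This is the main obstacle. We work with the pressure equation \eqref{eq:pressure}. Near the interface $v<\Theta$, so $g\equiv0$ there and $v$ solves the pure PME pressure equation locally.

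1. The condition $(\phi^{m-1})'(r_0)<0$ in $\mathscr X$ says the initial pressure has a nondegenerate linear slope at $r_0$. This rules out a waiting time, so $b(t)$ starts moving immediately.
2. The Aronson–Bénilan-type semiconvexity bound $\Delta v\ge -C/t$ holds locally near the interface, because the reaction vanishes there.
3. Combining steps 1 and 2 with the Caffarelli–Vázquez–Wolanski / Caffarelli–Friedman regularity theory, adapted to radial symmetry as in \cite{LouZhou2024}, we obtain that $v$ is Lipschitz up to the interface, that $v_r(b(t)-,t)<0$, that $b\in C^1$, and that $b'=-v_r$.

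The delicate point is establishing nondegeneracy of $v_r$ at the interface for all $t>0$. Our first attempt would be local comparison with travelling-wave pressure profiles $v=c(ct-r+r_1)_+$ placed below $u$ near $r=b(t)$. If that comparison fails, we would instead import the argument of \cite{LouZhou2024} verbatim.

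\textbf{Outer monotonicity \eqref{eq:outermono}.} We use the reflection (Alexandrov–Serrin) argument. Fix $\lambda>R_0$ and let $u^\lambda$ be the reflection of $u$ across the sphere/plane $\{x_1=\lambda\}$ in the radial direction. At $t=0$ we have $u_0\le u_0^\lambda$ on the half-space $\{x_1>\lambda\}$, since $u_0$ vanishes outside $B_{R_0}$. Both $u$ and $u^\lambda$ solve the same equation, and they agree on the hyperplane $\{x_1=\lambda\}$. The comparison principle on the half-space therefore gives $u\le u^\lambda$ for all $t>0$.

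From $u\le u^\lambda$ for every $\lambda>R_0$ we get $u_r\le0$ for $r>R_0$. To upgrade this to strict inequality inside the positive set, we apply the strong maximum principle to $u^\lambda-u$, which satisfies a linear parabolic equation that is nondegenerate where $u>0$. Strictness also requires $u\not\equiv u^\lambda$, and this holds because the support of $u$ is not symmetric about $\{x_1=\lambda\}$.
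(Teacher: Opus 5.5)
Your proposal follows essentially the same route as the paper. The paper gives no independent argument: it defers well-posedness, the $C^1$ regularity of $b$ and Darcy's law to standard porous-medium theory and \cite{LouZhou2024}, and obtains \eqref{eq:outermono} by reflection, as you do.

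Two small repairs are needed.
\begin{enumerate}
\item In radial variables the planar pressure wave $c(ct-r+r_1)_+$ is a supersolution, not a subsolution, because the drift contributes $-(m-1)(N-1)cv/r$ to the right-hand side. As a lower barrier it therefore needs a speed deficit, $c(c't-r+r_1)_+$ with $c'<c$, used only in a thin strip near the interface.
\item The strong maximum principle alone gives only $u<u^\lambda$. The strict sign $u_r<0$ comes from Hopf's lemma for $u^\lambda-u$ on the plane $\{x_1=\lambda\}$.
\end{enumerate}
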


\subsection{Energy inequality}

For a smooth domain $D\subset\R^N$ define
\[
 E_D[z]=\int_D\left\{\frac12|\nabla z^m|^2-F(z)\right\}\,dx,
 \qquad F(q)=m\int_0^q s^{m-1}f(s)\,ds.
\]
We write $E=E_{\R^N}$.

\begin{proposition}\label{prop:energy}
Let $u$ be a bounded compactly supported weak solution. For $0<t_0<t_1$,
\begin{equation}\label{eq:energy-global}
 E[u(t_1)]+m\int_{t_0}^{t_1}\!\int_{\R^N}u^{m-1}u_t^2\,dx\,dt
 \le E[u(t_0)].
\end{equation}
If $D$ is bounded and smooth and $z$ solves the same equation in $D$ with a time-independent Dirichlet trace, then
\begin{equation}\label{eq:energy-bounded}
 E_D[z(t_1)]+m\int_{t_0}^{t_1}\!\int_Dz^{m-1}z_t^2\,dx\,dt
 \le E_D[z(t_0)].
\end{equation}
In particular, both energies are nonincreasing.
\end{proposition}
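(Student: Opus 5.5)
The plan is the standard Lyapunov argument for porous-medium problems: test the equation against $\partial_t(u^m)=mu^{m-1}u_t$ and integrate over space and time. I will first do this formally for smooth positive solutions and then pass to weak solutions by approximation. Everything uses the identity $\frac{d}{dt}F(u)=mu^{m-1}f(u)u_t$, which holds because $F'(q)=mq^{m-1}f(q)$. For a smooth positive solution on a bounded domain $D$, multiply \eqref{eq:PDE} by $(u^m)_t$ and integrate over $D$. The left side becomes $m\int_D u^{m-1}u_t^2$. For the diffusion term, integration by parts gives
\[
\int_D \Delta u^m\,(u^m)_t=-\frac{d}{dt}\int_D\frac12|\nabla u^m|^2+\int_{\partial D}\partial_\nu u^m\,(u^m)_t .
\]
The reaction term is $\frac{d}{dt}\int_D F(u)$. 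In the setting of \eqref{eq:energy-bounded} the Dirichlet trace does not depend on time, so $(z^m)_t=0$ on $\partial D$ and the boundary integral vanishes. Integrating in time from $t_0$ to $t_1$ then gives \eqref{eq:energy-bounded}, in fact with equality.

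For \eqref{eq:energy-global}, the solution is compactly supported on $[t_0,t_1]$ by Proposition~\ref{prop:basic}. Its support stays inside a fixed ball $B_R$, because $b$ is continuous and hence bounded on $[t_0,t_1]$. The whole-space energy therefore equals $E_{B_R}$ with zero Dirichlet trace. However, $u$ is not smooth across the free boundary, so the formal computation must be justified. I will regularize with the Oleinik-type approximation. Let $u_\varepsilon$ solve the equation in $B_R$ with boundary value $\varepsilon$ and initial datum $u(t_0)+\varepsilon$, with $f$ replaced by $f_\varepsilon(q)=f(q-\varepsilon)$ so that the constant $\varepsilon$ is still a subsolution. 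Then $u_\varepsilon\ge\varepsilon>0$, the equation is uniformly parabolic, $u_\varepsilon$ is classical, and the bounded-domain identity applies to it with the energy built from $F_\varepsilon$.

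The main step is the passage $\varepsilon\to0$. By comparison and the standard $L^1$-contraction and uniqueness, $u_\varepsilon\to u$ locally uniformly. The energy identity supplies uniform bounds on $\sup_t\int|\nabla u_\varepsilon^m|^2$ and on $\iint u_\varepsilon^{m-1}u_{\varepsilon,t}^2$. Since $u_\varepsilon^{m-1}u_{\varepsilon,t}^2=\frac{4}{(m+1)^2}\bigl|\partial_t u_\varepsilon^{(m+1)/2}\bigr|^2$, the second bound says that $\partial_t u_\varepsilon^{(m+1)/2}$ is bounded in $L^2$.

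I then pass to the limit using weak compactness and weak lower semicontinuity. We get $\partial_t u_\varepsilon^{(m+1)/2}\rightharpoonup\partial_t u^{(m+1)/2}$ in $L^2$, and $\nabla u_\varepsilon^m(t_1)\rightharpoonup\nabla u^m(t_1)$ in $L^2$. The second convergence holds for every $t_1$ because $u_\varepsilon(t_1)\to u(t_1)$ uniformly and the gradients are bounded. The $F_\varepsilon$ terms converge by uniform convergence, since $F_\varepsilon\to F$ locally uniformly. At time $t_0$ the initial energies converge: $\nabla(u(t_0)+\varepsilon)^m\to\nabla u(t_0)^m$ strongly, provided $\nabla u^m(t_0)\in L^2$. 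This is true for $t_0>0$ by the standard regularity $u^m\in L^2_{loc}(H^1)$, so I may choose $t_0$ off a null set first and extend afterwards. Lower semicontinuity of the norms on the left-hand side then yields the inequality \eqref{eq:energy-global}. The fact that it is only an inequality reflects exactly this loss in the limit.

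The remaining difficulty is to extend from almost every $t_0$ to all $t_0$. For this I will use that $u$ is smooth in the positive phase and that $u^{m-1}$ is Lipschitz up to the free boundary, which should follow from the argument of \cite{LouZhou2024} underlying Proposition~\ref{prop:basic}. These give $\nabla u^m(t)\in L^2$ for every $t>0$, with continuity in $t$, so the inequality holds for all $0<t_0<t_1$. Monotonicity of both energies follows immediately because the dissipation term is nonnegative.
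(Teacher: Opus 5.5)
Your route is the same as the paper's: the formal identity for a smooth positive regularization, the vanishing boundary term, a uniformly parabolic approximation, and weak lower semicontinuity. The paper only sketches the last two steps. Where you make them concrete, one step fails as written.

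You claim $\nabla(u(t_0)+\varepsilon)^m\to\nabla u^m(t_0)$ strongly in $L^2$ ``provided $\nabla u^m(t_0)\in L^2$''. This is false.
\begin{itemize}
\item On $\{u>0\}$ one has $\nabla(u+\varepsilon)^m=\bigl((u+\varepsilon)/u\bigr)^{m-1}\nabla u^m$, and the prefactor blows up exactly at the free boundary.
\item For the solutions at hand, Darcy's law gives the pressure a nonzero slope at the interface whenever $b'>0$ (this is also built into $\mathscr X$). So near $r=b$ one has $u\approx c\,(b-r)^{1/(m-1)}$, hence $|\nabla(u+\varepsilon)^m|^2\ge m^2\varepsilon^{2(m-1)}|\nabla u|^2\gtrsim\varepsilon^{2(m-1)}(b-r)^{2(2-m)/(m-1)}$.
\item This is not integrable when $m\ge3$. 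Your regularized energies are therefore $+\infty$ at every $t_0$ in an open set of times, so discarding a null set of times does not help.
\item Consequently the identity for $u_\varepsilon$ gives neither the uniform bounds on $\sup_t\int|\nabla u_\varepsilon^m|^2$ and on the dissipation, nor the convergence of the right-hand side that your lower-semicontinuity step needs.
\end{itemize}
For $m<3$ with Lipschitz pressure the claim does hold, but because $\nabla u\in L^2$, not because $\nabla u^m\in L^2$.

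The fix is to perturb at the level of $u^m$. Take the initial datum $(u(t_0)^m+\varepsilon^m)^{1/m}$ with boundary value $\varepsilon$. Then the gradient of the $m$-th power of the datum is exactly $\nabla u^m(t_0)$, and the initial energies converge trivially. The same shift applied to the Dirichlet trace also handles the bounded-domain inequality, which you only checked for smooth positive $z$.

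Two further corrections:
\begin{itemize}
\item Drop $f_\varepsilon$. It destroys the comparison $u\le u_\varepsilon$ you invoke, because the reactions $f(q-\varepsilon)$ and $f(q)$ are not ordered. It is also unnecessary: since $f\equiv0$ on $[0,\theta]$, each constant $\varepsilon\le\theta$ is already a stationary solution.
\item The Lipschitz regularity of $u^{m-1}$ you use at the end is not available for a general weak solution as in the statement, and it is not needed. Once the inequality holds from a.e.\ $t_0$ to every $t_1$, it yields $\nabla u^m(t_1)\in L^2$ for every $t_1>0$, and you can then rerun the argument from any $t_0>0$.
\end{itemize}
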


\begin{proof}
For a smooth positive regularization, $(z^m)_t=mz^{m-1}z_t$ and $F'(z)=mz^{m-1}f(z)$ give
\[
 \frac d{dt}E_D[z(t)]
 =-m\int_Dz^{m-1}z_t^2\,dx
 +\int_{\partial D}\partial_\nu z^m\,(z^m)_t\,dS.
\]
The boundary term is absent for a compactly supported whole-space approximation and vanishes on a bounded domain because the Dirichlet trace is independent of time. Integrating and passing to a standard uniformly parabolic approximation gives \eqref{eq:energy-global}--\eqref{eq:energy-bounded} by weak lower semicontinuity. Only the resulting monotonicity is used below. This is the standard energy inequality for reaction porous-medium equations; see \cite{Vazquez2007,LouZhou2024}.
\end{proof}

\section{Radial stationary states}

Let $U=U(r)$ be a bounded nonnegative radial weak stationary solution and set
\[
 W=U^m,\qquad h(W)=f(W^{1/m}).
\]
Because $h$ is locally Lipschitz (it vanishes identically on $[0,\theta^m]$), elliptic regularity gives $W\in C^{1,\alpha}_{\rm loc}$, and on every interval with positive radius
\begin{equation}\label{eq:W}
 W''+\frac{N-1}{r}W'+h(W)=0,
 \qquad W'(0)=0.
\end{equation}

\begin{lemma}\label{lem:stationarygeometry}
A nonzero nonconstant bounded radial weak stationary state $W$ is positive at the origin and at every finite radius, and its central value $W(0)$ must belong to $(\theta,1)$. Furthermore, $W$ does not have compact support.
\end{lemma}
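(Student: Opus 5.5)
The plan is to argue entirely at the level of the radial ODE \eqref{eq:W}, written in divergence form
\[
 \bigl(r^{N-1}W'\bigr)'=-r^{N-1}h(W),\qquad W'(0)=0,
\]
using that $W\in C^{1,\alpha}_{\rm loc}$ (so $W'$ is continuous on $[0,\infty)$ and this identity holds on the whole half-line, including across any zero set), that $W\ge0$ is bounded, and the sign structure of $h$ inherited from (F1): $h=0$ on $[0,\theta^m]$, $h>0$ on $(\theta^m,1)$, $h(1)=0$, $h<0$ on $(1,\infty)$, with $h$ Lipschitz near $1$. Throughout, "central value in $(\theta,1)$" is read as $U(0)\in(\theta,1)$, i.e. $W(0)\in(\theta^m,1)$.

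First I pin down $W(0)$ by excluding the other ranges. If $W(0)\le\theta^m$, I would show $W\le\theta^m$ for all $r$. Indeed, on any initial interval where $W\le\theta^m$ we have $h(W)=0$, hence $r^{N-1}W'\equiv0$ and $W\equiv W(0)$, so $W$ cannot leave $[0,\theta^m]$; thus $W$ is constant, a contradiction. If $W(0)=1$, then $W\equiv1$ by uniqueness for the initial value problem: $h$ is Lipschitz near $1$, and the singular point $r=0$ is handled by the standard integral-equation contraction. This again contradicts nonconstancy. If $W(0)>1$, then $h(W)<0$ near $r=0$, so $r^{N-1}W'$ is increasing and $W$ increases. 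As long as $W>1$ this persists, and in fact it persists for all $r$. With $c:=-h(W(0))>0$ and $-h(W)\ge c$ on the relevant range (using monotonicity of $W$ and continuity of $h$ on $[W(0),\sup W]$), we get $r^{N-1}W'\ge c r^N/N$. Hence $W'\ge cr/N$ and $W\to\infty$, contradicting boundedness. Therefore $W(0)\in(\theta^m,1)$.

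Next, with $W(0)\in(\theta^m,1)$, I get monotonicity. Since $h(W(0))>0$, the quantity $r^{N-1}W'$ starts strictly decreasing from $0$. As long as $W\in[0,W(0)]$ we have $h(W)\ge0$, so $r^{N-1}W'$ is nonincreasing and negative for $r>0$. A continuation argument then gives $W'<0$ on $(0,\infty)$, and in particular $W<1$ everywhere.

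For positivity and non-compact support, suppose $W$ reaches $\theta^m$ at a first radius $R$ (if it never does, $W>\theta^m>0$ and we are done). Beyond $R$ we have $h(W)=0$ for as long as $W\ge0$, so
\[
 r^{N-1}W'(r)=-\alpha R^{N-1}\quad\text{with }\alpha:=-W'(R)>0.
\]
Now suppose $W(\rho)=0$ at some finite $\rho>R$, taking the first such $\rho$. Then $\rho$ is a minimum point of the nonnegative $C^1$ function $W$ on $(0,\infty)$, so $W'(\rho)=0$. But the identity above gives $W'(\rho)=-\alpha R^{N-1}\rho^{1-N}<0$, a contradiction. Hence $W>0$ at every finite radius, which also shows that the support is not compact.

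The main obstacle is only regularity bookkeeping: making sure the ODE and the $C^1$ matching hold across the level $\theta^m$ and at a putative zero of $W$, where the original $U$-equation degenerates. This is exactly why I work with $W=U^m$ and use the $C^{1,\alpha}$ regularity stated before the lemma. A further point to check is that the $W(0)>1$ case really stays above $1$, which follows since $W$ is increasing there.
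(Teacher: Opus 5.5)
Your proof is correct. The exclusion of the central values $W(0)\le\theta^m$, $W(0)=1$ and $W(0)>1$ is essentially the paper's argument: the paper uses ODE uniqueness for both constant cases and the same linear-growth contradiction above $1$.

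Where you differ is the positivity step. The paper does not prove monotonicity at this point. It takes an endpoint $r_*>0$ of a positive component and reads off $W(r_*)=W'(r_*)=0$ from $C^1$ regularity and $W\ge0$. Uniqueness for the regular ODE at $r_*$ (with $h$ Lipschitz and $h(0)=0$) then forces $W\equiv0$ on that component.

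You instead combine the integrated flux identity $r^{N-1}W'(r)=-\int_0^r s^{N-1}h(W(s))\,ds$ with the sign $h\ge0$ on $[0,1]$. This gives $W'<0$ on $(0,\infty)$, and a first zero then contradicts the interior-minimum condition $W'(\rho)=0$.

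The two routes trade off as follows. The paper's is shorter and needs only that $h$ is Lipschitz with $h(0)=0$. Yours uses the sign structure of $h$ below $1$, but it also yields strict monotonicity, the bound $W<1$, and the harmonic exterior flux. Those are exactly the facts the paper re-derives later in Lemma~\ref{lem:noGS2} and Proposition~\ref{prop:stationary}. Once you have $W'<0$ everywhere, the explicit harmonic formula is not needed: a zero at a finite radius with negative slope already violates $W\ge0$.

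Two small repairs are needed.

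\begin{enumerate}
\item In the case $W(0)>1$, the constant $c=-h(W(0))$ is not a lower bound for $-h(W)$ in general, because $h$ need not be monotone on $(1,\infty)$. Take instead $c=\min_{[W(0),\sup W]}(-h)>0$. This is positive by boundedness and continuity, which is what your parenthetical remark actually supports.
\item In the edge case $W(0)=\theta^m$, knowing $W\equiv\theta^m$ on the maximal interval where $W\le\theta^m$ does not by itself stop $W$ from exceeding $\theta^m$ just past its endpoint. Close this either by ODE uniqueness at that regular point, since the constant $\theta^m$ solves the initial value problem there. Alternatively, note that $h(W)\ge0$ nearby forces $W'\le0$ just beyond the endpoint.
\end{enumerate}
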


\begin{proof}
If $U(0)>1$, then while $W>1$,
\[
 (r^{N-1}W')'=-r^{N-1}h(W)>0,
\]
so $W'>0$. A bounded increasing profile would converge to a number $L>1$; since $h(L)<0$, the same identity would force $W'$ to grow at least linearly, a contradiction. If $U(0)=1$, ODE uniqueness gives $U\equiv1$. If $0\le U(0)\le\theta$, the regular radial harmonic solution is locally constant, and ODE uniqueness continues it globally.

If a positive component had an endpoint $r_*>0$ adjacent to a zero interval, the global $C^1$ regularity would give $W(r_*)=W'(r_*)=0$. Uniqueness for \eqref{eq:W} at the regular point $r_*>0$ would force $W\equiv0$ on the adjoining component, a contradiction. This excludes an inner boundary of an annular component and a finite outer boundary. The lemma follows.
\end{proof}

\subsection{The case $N=2$}

\begin{lemma}\label{lem:noGS2}
If $N=2$, every bounded nonnegative radial whole-space weak stationary solution is one of the constants in $[0,\theta]\cup\{1\}$. In particular, no positive radial stationary state decays to zero.
\end{lemma}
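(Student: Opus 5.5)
The plan is to reduce to the nonconstant case via Lemma~\ref{lem:stationarygeometry} and then rule that case out by a two-dimensional logarithmic-growth argument applied to $W=U^m$. If $W$ is constant, the value $c$ satisfies $h(c)=0$, so $c^{1/m}\in[0,\theta]\cup\{1\}$ by (F1). Now suppose $W$ is nonconstant. Lemma~\ref{lem:stationarygeometry} gives that $W>0$ everywhere and $W(0)\in(\theta^m,1)$ (central value $U(0)\in(\theta,1)$), so $h(W(0))>0$. Near the origin this gives $(rW')'=-rh(W)<0$, hence $W'<0$ for small $r>0$.

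The second step shows that $W$ stays monotone decreasing and drops below $\theta^m$. On any interval where $W\in(\theta^m,1)$ we have $(rW')'<0$, so $rW'$ is strictly decreasing, negative, and $W'<0$. If $W$ stayed above $\theta^m$ for all $r$, then $rW'(r)\le r_1W'(r_1)=:-a<0$ for $r\ge r_1$. This gives $W(r)\le W(r_1)-a\log(r/r_1)\to-\infty$, contradicting positivity. So there is a first radius $R$ with $W(R)=\theta^m$ and $W'(R)=:-\alpha<0$; here $\alpha>0$ because $rW'$ was strictly decreasing from $0$.

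The third step uses the harmonic region. For $r\ge R$, as long as $W\le\theta^m$, we have $h(W)=0$, so $(rW')'=0$ and
\[
W(r)=\theta^m-\alpha R\log(r/R).
\]
This is still decreasing, so it stays below $\theta^m$ and the formula persists. It reaches $0$ at the finite radius $R e^{\theta^m/(\alpha R)}$. This contradicts the positivity of $W$ at every finite radius given by Lemma~\ref{lem:stationarygeometry}; equivalently, $W$ would be compactly supported, which that lemma also excludes. Hence no nonconstant state exists. The final sentence of the statement follows, since a decaying positive state would be nonconstant.

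The main subtlety is the persistence of monotonicity. One must make sure $W$ cannot exceed $1$ or turn back upward before reaching $\theta^m$. Both are handled by the sign of $(rW')'$ on $(\theta^m,1)$ together with the start $W(0)<1$. One must also justify using the ODE \eqref{eq:W} across $r=R$, which holds because $W\in C^{1,\alpha}$ and the ODE is valid wherever $W>0$. I expect no other obstacle; the whole argument is the logarithmic (recurrence) character of the planar Laplacian.
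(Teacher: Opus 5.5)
Your proposal is correct and follows the paper's proof essentially step by step. Both use Lemma~\ref{lem:stationarygeometry} to reduce to a central value in $(\theta,1)$, get monotone decrease from $(rW')'<0$, force a finite first crossing of $\theta^m$ by logarithmic integration, and write the explicit harmonic continuation $W=\theta^m-\alpha R\log(r/R)$, which hits zero with nonzero slope. The paper states the final contradiction as the zero extension carrying a surface measure in $\Delta W$, whereas you invoke the positivity (equivalently, $C^1$ nonnegativity) from Lemma~\ref{lem:stationarygeometry}; this is the same obstruction phrased differently.
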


\begin{proof}
By Lemma \ref{lem:stationarygeometry}, only a central value $q\in(\theta,1)$ could produce a nonconstant state. While $W>\theta^m$,
\[
 (rW')'=-rh(W)<0,
\]
so $W'<0$. The profile reaches $\theta^m$ at a finite radius: otherwise, for any $r_0>0$,
\[
 rW'(r)\le-\int_0^{r_0}sh(W(s))\,ds=-c_0<0
 \qquad(r\ge r_0),
\]
and logarithmic integration would force $W$ below $\theta^m$. At the first crossing $R$, one has $W'(R)<0$. For $r\ge R$ the equation is harmonic and
\[
 W(r)=\theta^m+RW'(R)\log\frac rR.
\]
It reaches zero at a finite radius with nonzero derivative. Its zero extension therefore carries a nonzero surface measure in $\Delta W$ and is not a whole-space weak stationary solution, contradicting Lemma \ref{lem:stationarygeometry}.
\end{proof}

\subsection{The case $N\ge3$}

Fix $q\in(\theta,1)$ and let $W(\cdot;q)$ solve \eqref{eq:W} with $W(0;q)=q^m$. Let $R(q)$ be its first crossing of $\theta^m$ and put
\[
 \alpha(q)=-W_r(R(q);q)>0.
\]
For $r\ge R(q)$ the harmonic continuation is
\begin{equation}\label{eq:exterior}
 W(r)=\theta^m-\frac{\alpha(q)R(q)}{N-2}
 \left[1-\left(\frac{R(q)}r\right)^{N-2}\right].
\end{equation}
It decays to zero precisely when
\begin{equation}\label{eq:matching}
 R(q)\alpha(q)=(N-2)\theta^m.
\end{equation}

\begin{proposition}\label{prop:stationary}
The set $\cS$ is nonempty. Distinct elements of $\cS$ are not pointwise ordered: if $U_1,U_2\in\cS$ and $U_1\le U_2$, then $U_1=U_2$.
\end{proposition}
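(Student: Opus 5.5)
Existence will come from a shooting argument in the central value $q$, using continuity of $Q(q):=R(q)\alpha(q)$. The target is \eqref{eq:matching}, $Q(q)=(N-2)\theta^m$.

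\emph{Continuity.} First check that $R(q)<\infty$ for every $q\in(\theta,1)$. This is part of (F3), and it can also be seen directly: while $W>\theta^m$ we have $(r^{N-1}W')'=-r^{N-1}h(W)<0$, so $W$ is decreasing. At $r=R(q)$ one has $W'(R(q))<0$, since otherwise $W\equiv\theta^m$ nearby, which contradicts ODE uniqueness combined with $h>0$ above $\theta^m$. The crossing is therefore transversal. Continuous dependence for \eqref{eq:W} (the singular point $r=0$ is handled by the standard integral formulation) then gives that $R(q)$ and $\alpha(q)$ are continuous in $q$.

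\emph{Lower endpoint.} By (F3), $Q(q)\to0<(N-2)\theta^m$ as $q\downarrow\theta$.

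\emph{Upper endpoint.} I claim $Q(q)\to\infty$ as $q\uparrow1$. As $q\to1$, the solution stays close to $1$ on ever larger balls. On such a ball the linearization $\phi''+\frac{N-1}{r}\phi'+h'(1)\phi=0$ with $h'(1)<0$ has exponentially growing solutions. The deviation $1-W$ therefore grows like $\epsilon I_0$-type Bessel functions until it reaches order one, and this happens at a radius $\rho(q)\to\infty$.

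After that radius the profile is a translate, in the large-$r$ regime, of the one-dimensional heteroclinic-type solution of $W''+h(W)=0$ leaving $1$. It reaches $\theta^m$ within a bounded further distance, with slope close to $\sqrt{2\int_{\theta^m}^{1}h}>0$. The $\frac{N-1}{r}W'$ term is $O(1/\rho)$ and is negligible there. Hence $R(q)\to\infty$ while $\alpha(q)$ stays bounded below, so $Q(q)\to\infty$.

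By the intermediate value theorem some $q_*$ satisfies \eqref{eq:matching}. By \eqref{eq:exterior}, $W(\cdot;q_*)$ is positive, strictly decreasing and tends to $0$, so $U=W^{1/m}\in\cS$. The one delicate step is the claim $Q(q)\to\infty$ as $q\uparrow1$; I would prove it by an energy argument. Set $H(W)=\int_W^{1}h$ (up to normalization, so that $H\ge0$) and define the energy $\mathcal E=\frac12 W'^2-H(W)$. This energy changes only through the friction term $\frac{N-1}{r}W'^2$. The total friction loss is small once the transition happens at large $r$, because the transition layer has bounded width. That yields $\alpha(q)^2\ge2\int_{\theta^m}^{1}h-o(1)$.

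\emph{Non-ordering.} Suppose $U_1\le U_2$ in $\cS$, and set $W_i=U_i^m$. For large $r$ both profiles are harmonic, so $W_i=c_i r^{2-N}$. The difference $W_2-W_1\ge0$ satisfies
\[
\Delta(W_2-W_1)+\big(h(W_2)-h(W_1)\big)=0,
\]
where $h(W_2)-h(W_1)=c(x)(W_2-W_1)$ with $c$ bounded. By the strong maximum principle, either $W_1\equiv W_2$ or $W_2>W_1$ everywhere.

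To rule out strict ordering I would use a Pohozaev/Wronskian-type identity. Multiply the equation for $W_1$ by $r^{N-1}W_2$, multiply the equation for $W_2$ by $r^{N-1}W_1$, subtract, and integrate over $(0,r)$:
\[
r^{N-1}(W_1'W_2-W_2'W_1)(r)=\int_0^r s^{N-1}\big(W_1h(W_2)-W_2h(W_1)\big)\,ds .
\]
The left side tends to $0$ as $r\to\infty$, because $W_i\sim c_ir^{2-N}$ and $W_i'\sim -(N-2)c_ir^{1-N}$. This gives
\[
\int_0^\infty s^{N-1}\big(W_1h(W_2)-W_2h(W_1)\big)\,ds=0 .
\]

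The sign of this integrand requires monotonicity of $h(W)/W$, which is not available in general. I would therefore instead use the scaling structure. With $W_1<W_2$, compare the central values: $q_1<q_2$. Use the zero-number idea. The difference $W_2-W_1$ is positive at $r=0$ and positive at infinity with tail coefficient $c_2-c_1$. Examine the region $W_2>\theta^m\ge W_1$. There $\Delta(W_2-W_1)=-h(W_2)<0$, so $W_2-W_1$ is superharmonic. On the harmonic exterior $r\ge R(q_2)$ it is harmonic, equal to $(c_2-c_1)r^{2-N}$ with $c_i=\theta^mR_i^{N-2}\cdot\frac{N-2}{N-2}$, where the matching gives $c_i=\theta^m R(q_i)^{N-2}\cdot$const.

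I expect this step to be the main obstacle, and I would first attempt it via the identity above applied over the region $r\ge R(q_1)$, using the explicit exterior forms \eqref{eq:exterior}. I have not yet found the decisive inequality.
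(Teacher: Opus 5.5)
\textbf{Genuine gap: the non-comparability part is not proved.} Your Wronskian identity is correct. On the harmonic exterior, where $W_i=A_ir^{2-N}$, the boundary term vanishes identically. But the resulting relation $\int_0^\infty s^{N-1}(W_1h(W_2)-W_2h(W_1))\,ds=0$ has no sign without monotonicity of $h(W)/W$, and your zero-number and superharmonicity remarks do not produce a contradiction either.

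The missing idea is the \emph{sliding method}. It uses only the translation invariance of $\Delta W+h(W)=0$ on $\R^N$ and the exact tails, not any structure of $h$. The argument runs as follows.
\begin{enumerate}
\item If $W_1\le W_2$ and $W_1\not\equiv W_2$, the strong maximum principle gives $W_1<W_2$, hence $R_1<R_2$ and $A_1<A_2$.
\item For $s$ in a bounded interval, $W_1(|x-se_1|)-W_2(|x|)=(A_1-A_2)|x|^{2-N}+o(|x|^{2-N})<0$ uniformly outside a large ball. So strict order survives small translations.
\item Strict order fails for large $s$: at $x=se_1$ the translate equals $W_1(0)>\theta^m$, while $W_2(s)\to0$.
\item At the critical translation $s_*>0$ a contact must therefore occur at a finite point. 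The function $D(x)=W_2(|x|)-W_1(|x-s_*e_1|)\ge0$ solves $\Delta D+c(x)D=0$ with $c$ bounded, since $h$ is locally Lipschitz.
\item The strong maximum principle forces $D\equiv0$. This is impossible, because the two profiles are strictly radially decreasing about different centres.
\end{enumerate}
This is how the paper concludes.

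\textbf{Existence part.} Your argument is the paper's shooting scheme, but the step $q\uparrow1$ is only heuristic. The bounded width of the transition layer, which is what makes the friction loss small in your energy estimate, is asserted rather than proved. The paper obtains exactly what is needed, $\liminf_{q\uparrow1}\alpha(q)>0$, by a compactness argument:
\begin{enumerate}
\item Fix $\eta\in(\theta^m,1)$ and translate to the first radius $r_\eta(q)\to\infty$ where $W=\eta$. The energy bound keeps $W_r(r_\eta(q))$ bounded.
\item Pass to the limit $Z''+h(Z)=0$, $Z(0)=\eta$, $Z'(0)=p_0\le0$. Its conserved energy shows that $Z$ crosses $\theta^m$ at a finite $s_*$ with slope at least $\sqrt{2\int_{\theta^m}^{\eta}h(\zeta)\,d\zeta}$.
\item Continuous dependence at this transversal crossing transfers the bound to $\alpha(q)$.
\end{enumerate}
This also supplies the layer-width bound your sharper estimate would require.

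Finally, your aside that finiteness of $R(q)$ follows directly from monotonicity is wrong for $N\ge3$. The bound $r^{N-1}W'\le-c_0$ only gives $W'\le-c_0r^{1-N}$, which is integrable, so $W$ could decrease to $\theta^m$ without crossing. That is why finiteness is built into (F3); simply invoke it.
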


\begin{proof}
Transversality at the first crossing follows from
\[
 r^{N-1}W_r(r;q)=-\int_0^r s^{N-1}h(W(s;q))\,ds<0
 \qquad(W>\theta^m).
\]
Hence $q\mapsto R(q)$ and $q\mapsto\alpha(q)$ are continuous. Assumption \ref{ass:reaction}(F3) gives $R(q)\alpha(q)\to0$ as $q\downarrow\theta$.

We prove $R(q)\alpha(q)\to\infty$ as $q\uparrow1$. Continuous dependence on $W\equiv1$ gives $R(q)\to\infty$. Fix $\eta\in(\theta^m,1)$, let $r_\eta(q)$ be the first radius where $W=\eta$, and set
\[
 H(s)=\int_{\theta^m}^s h(\zeta)\,d\zeta.
\]
Then $r_\eta(q)\to\infty$ and
\[
 \frac d{dr}\left(\frac12W_r^2+H(W)\right)
 =-\frac{N-1}{r}W_r^2\le0,
\]
so the slopes at $r_\eta(q)$ are bounded. Along a sequence $q_n\uparrow1$, let these slopes converge to $p_0\le0$. The translates $Z_n(s)=W(r_\eta(q_n)+s;q_n)$ converge on compact intervals before the first crossing to
\[
 Z''+h(Z)=0,\qquad Z(0)=\eta,\qquad Z'(0)=p_0.
\]
Its energy is
\[
 \frac12Z'^2+H(Z)=E_0:=\frac12p_0^2+H(\eta)>0.
\]
The orbit is strictly decreasing while $Z>\theta^m$. Its crossing time is
\[
 s_* =\int_{\theta^m}^{\eta}
 \frac{d\zeta}{\sqrt{2(E_0-H(\zeta))}}<\infty.
\]
When $p_0=0$, the possible square-root singularity at $\zeta=\eta$ is integrable; when $p_0<0$, the integrand is bounded there. At the crossing,
\[
 -Z'(s_*)=\sqrt{p_0^2+2H(\eta)}\ge\sqrt{2H(\eta)}>0.
\]
Continuous dependence at this transversal crossing yields
\[
 \liminf_{q\uparrow1}\alpha(q)\ge\sqrt{2H(\eta)}>0.
\]
Thus $R(q)\alpha(q)\to\infty$, and the intermediate value theorem gives a parameter satisfying \eqref{eq:matching}.

For non-comparability, put $W_i=U_i^m$ and assume $W_1\le W_2$ but not identically. The strong maximum principle gives $W_1<W_2$. If $R_i$ is the crossing of $\theta^m$, then $R_1<R_2$ and
\[
 W_i(r)=A_ir^{2-N},\qquad A_i=\theta^mR_i^{N-2},
\]
so $A_1<A_2$. Translate $W_1$ by $se_1$. For $s$ in a bounded interval,
\[
 W_1(|x-se_1|)-W_2(|x|)
 =(A_1-A_2)|x|^{2-N}+o(|x|^{2-N})<0
\]
uniformly at infinity. The order holds at $s=0$ and fails for some finite $s_1$. Define
\[
 s_*:=\sup\{s\in[0,s_1]:W_1(|x-\sigma e_1|)<W_2(|x|)
 \text{ for all }x\in\R^N,\ 0\le\sigma<s\}.
\]
The uniform tail inequality and compact uniform continuity show that $s_*>0$, that $W_1(|x-s_*e_1|)\le W_2(|x|)$, and that equality occurs at a finite point; otherwise the translation could be increased. The nonnegative difference
\[
 D(x)=W_2(|x|)-W_1(|x-s_*e_1|)
\]
solves $\Delta D+c(x)D=0$ with $c\in L^\infty$. The local Harnack inequality for nonnegative solutions with bounded zeroth-order coefficient implies that an interior zero forces $D\equiv0$, impossible because the centers and tail coefficients differ. Hence $U_1=U_2$.
\end{proof}

\section{Radial zero number and quasiconvergence}

To establish the convergence result above, our main tool is the so-called \textit{zero-number argument} for the porous medium equation, which describes the intersection points of two solutions. Let $u_1, u_2$ be radial, bounded, nonnegative, global-in-time solutions of \eqref{eq:PDE} with initial data $u_{10}, u_{20}\in \mathscr{X}$. Since $u_1$ and $u_2$ both vanish outside their supports, the intersection number is ambiguous in regions where both functions are zero. Thus, let us denote by $\mathcal{Z}_0(t)$ the number of positive intersection points between them, that is,
\begin{equation}\label{def-Z0}
\mathcal{Z}_0(t):= \#\{x\in \R \mid u_1(x,t)=u_2(x,t)>0\},
\end{equation}
where $\# S$ denotes the number of elements in a set $S$.

\begin{theorem}\label{thm:radialzero}
Assume Assumption \ref{ass:reaction}. Let $u_1, u_2$ be radial, bounded, nonnegative, global-in-time solutions of \eqref{eq:PDE} with initial data in $\mathscr{X}$ such that $\mathcal{Z}_0(0)<\infty$. Then $\mathcal{Z}_0(t)$ is uniformly bounded for $t>0$. There exists a time $T\geq0$ such that 
$$
\mathcal{Z}_0(t) \mbox{ is nonincreasing for } t>T.
$$
More specifically, there exists $\{t_j\}_{0\leq j\leq k}$ with $1\leq k \in \mathbb{N}$ and $0=t_0<t_1<\cdots< T:= t_{k-1} < t_k = +\infty$ such that
\begin{enumerate}
\item $\mathcal{Z}_0(t)$ decreases if $t_j<t<t_{j+1}\ (j=0,1,\cdots,k-1)$;
\item $\mathcal{Z}_0(t)$ strictly increases at $t=t_j$ in the following sense: 
$$
\lim_{t\rightarrow t_j + 0}\mathcal{Z}_0(t)>\lim_{t\rightarrow t_j -0}\mathcal{Z}_0(t).
$$
\end{enumerate}
\end{theorem}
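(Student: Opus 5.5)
The plan is to reduce Theorem \ref{thm:radialzero} to the classical Angenent/Chen zero-number theory. That theory applies on the common positive set, where both $u_1$ and $u_2$ solve a uniformly parabolic equation. The only new events are those produced at the free boundaries. Write $b_1(t),b_2(t)$ for the two free-boundary radii from Proposition \ref{prop:basic}, and set $\rho(t)=\min\{b_1(t),b_2(t)\}$. On $\{0\le r<\rho(t)\}$ both solutions are positive and smooth. The difference $w=u_1^m-u_2^m$ (equivalently $u_1-u_2$) satisfies a linear radial equation
\[
 w_t = a(r,t)\Big(w_{rr}+\tfrac{N-1}{r}w_r\Big)+c(r,t)w ,
\]
where $a=m\,\xi^{m-1}>0$ for an intermediate value $\xi$, and $c$ is locally bounded because $f$ is locally Lipschitz. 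On any compact subset of the positive region, Angenent's theorem (in the radial form with the regular singular point $r=0$) gives the standard conclusions. Zeros of $w$ in the open interval $(0,\rho(t))$ are finite for $t>0$. Their number is nonincreasing. It drops strictly whenever a multiple zero occurs. The positive intersections counted in $\mathcal Z_0(t)$ are exactly the zeros of $w$ in $[0,\rho(t))$. Intersections at radii where one function is positive and the other vanishes are impossible.

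The second step analyses the boundary of the common positive interval, i.e.\ the point $r=\rho(t)$. I will use the pressure $v_i=\frac{m}{m-1}u_i^{m-1}$ together with Darcy's law \eqref{Darcy}. Near the free boundary, $v_i$ is $C^1$ up to $b_i$ with $(v_i)_r(b_i-,t)<0$ (nondegeneracy inherited from $\mathscr X$). Consequently, near $r=\rho(t)$ the sign of $w$ is fixed in two situations: when $b_1(t)\ne b_2(t)$, and when $b_1=b_2$ but the slopes $(v_i)_r$ differ. In that regime no zero can enter or leave through $r=\rho(t)$. The only way $\mathcal Z_0$ can increase is a crossing time $t_j$ at which $b_1(t_j)=b_2(t_j)$, with the order of the supports switching. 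At such a time one new intersection can be born from the free boundary, and in general only a bounded number. Between such times the count is nonincreasing, which gives item (1). Item (2) follows by defining the $t_j$ as the times where the interfaces meet with a jump of the limit count. This is the structure already proved in \cite{LouZhou2024} for the radial degenerate zero-number theorem, and I would follow it.

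The third step, which I expect to be the main obstacle, is to show that the crossing times $\{t: b_1(t)=b_2(t)\}$ at which the count jumps are finitely many, so that $T=t_{k-1}<\infty$ and $\mathcal Z_0$ is uniformly bounded. My first approach is the following. Each jump at $t_j$ produces an extra sign change near the boundary. By the strict Angenent drop, any zero that is later lost must be lost through a degenerate zero or through the boundary. I would then relate the number of interface crossings to a zero number for the interfaces themselves. Consider $\beta(t)=b_1(t)-b_2(t)$, which is $C^1$ by Proposition \ref{prop:basic}. Comparing $\beta$ with the boundary intersection structure, each sign change of $\beta$ must be compensated by the loss of an interior zero of $w$ that was created earlier. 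Combined with $\mathcal Z_0(0)<\infty$, this yields a bound of the form $\mathcal Z_0(t)\le \mathcal Z_0(0)+1$ and finitely many switches. I would also use the outer monotonicity \eqref{eq:outermono}, since for $r>R_0$ both profiles are decreasing near their fronts. This makes the local picture at a crossing of the interfaces a transversal crossing of two decreasing pressure profiles, which can add at most one intersection. Finally, the eventual monotonicity for $t>T$ is immediate once there are no further jumps.
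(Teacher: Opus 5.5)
The paper gives no proof of its own here: the statement is imported directly from \cite[Theorem~1.2]{LouZhou2024}. Where you defer to that reference you agree with the paper. However, the parts you argue yourself contain a genuine gap, and the boundary mechanism they rest on is misidentified.

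First, your Step 3 does not yield finitely many jump times. Pairing each increase of $\mathcal Z_0$ with an earlier loss of an intersection ``created earlier'' is circular. The intersection lost may be the one created at the previous jump, so an infinite alternation jump, loss, jump, loss, \dots\ is fully compatible with your bookkeeping. At best this bounds $\mathcal Z_0$, and even $\mathcal Z_0(t)\le\mathcal Z_0(0)+1$ is asserted, not derived. It gives neither a finite sequence $t_1<\dots<t_{k-1}$ nor the time $T$. You would need a quantity that is monotone and drops strictly at each jump, or a structural reason why jumps cannot recur.

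Second, the picture in Step 2 is wrong, in the following ways.
\begin{itemize}
\item Suppose that at time $t_0$ one has $u_1\le u_2$ on $\{r\ge r_*\}$ with $u_1(r_*,t_0)<u_2(r_*,t_0)$. Comparison on the exterior domain $\{r>r_*\}$ keeps $u_1\le u_2$ there, hence $b_1\le b_2$, for a short time.
\item So the interfaces can switch to $b_1>b_2$ at $t_0$ only if $u_1(\cdot,t_0)>u_2(\cdot,t_0)$ at points arbitrarily close to the common interface point. This forces positive intersections to converge to that point as $t\uparrow t_0$.
\item Granted $\mathcal Z_0(t_0)<\infty$, the same argument with $u_1,u_2$ exchanged shows that no intersection is born there afterwards. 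A switch therefore costs intersections; it is not the birth event you describe.
\item Likewise, $b_1=b_2$ with unequal pressure slopes is, by Darcy's law \eqref{Darcy}, a transversal crossing of the interfaces. At such a crossing the sign of $u_1-u_2$ next to the interface flips, so an intersection does leave through $r=\rho(t)$, contrary to your claim.
\end{itemize}

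What the proposal never treats is the genuinely degenerate part:
\begin{itemize}
\item finiteness of $\mathcal Z_0(t)$ up to the free boundary. Your Step 1 only gives it on compact subsets of the common positive phase, because $mu^{m-1}$ vanishes at the interface;
\item the tangential contact $b_1=b_2$ with equal pressure slopes;
\item the nondegeneracy $(v_i)_r(b_i-,t)<0$ for all $t>0$, which you only assert.
\end{itemize}
This boundary analysis is exactly what \cite[Theorem~1.2]{LouZhou2024} supplies. Either cite it as the paper does, or provide it.
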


\begin{remark}
Theorem \ref{thm:radialzero} follows directly from \cite[Theorem~1.2]{LouZhou2024}. Note that the conclusion also holds between global-in-time solutions and stationary states of \eqref{eq:PDE}, provided they are radially symmetric.
\end{remark}
\begin{proposition}\label{prop:quasi}
Let Assumption \ref{ass:reaction} hold. Let $u(x,t)$ be a radial, bounded, nonnegative, global-in-time solution of \eqref{eq:PDE} with initial datum $u_0\in\mathscr{X}$. Then $u(\cdot,t)$ converges as $t\to \infty$ to a stationary solution of \eqref{eq:PDE} in the $C^2_{loc}([0,\infty))$ topology, and the limit is of one of the following types:
\begin{enumerate}
\item a nonnegative zero of $f$;
\item a ground state $U_*\in\cS$.
\end{enumerate}
\end{proposition}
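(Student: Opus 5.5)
The plan is to combine the quasiconvergence theory of \cite{LouZhou2024} with the classification of radial stationary states from Section 3, and then use the radial zero-number Theorem \ref{thm:radialzero} to reduce the $\omega$-limit set to a single element.

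\textbf{Step 1: compactness and the structure of the $\omega$-limit set.} By Proposition \ref{prop:basic}, $u$ is bounded by $\max\{1,\|u_0\|_\infty\}$. Hence local H\"older estimates for the PME with Lipschitz reaction make the orbit $\{u(\cdot,t)\}_{t\ge1}$ relatively compact in $C_{loc}$. The $\omega$-limit set $\omega(u)$ is therefore nonempty, compact and connected.

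\textbf{Step 2: every element of $\omega(u)$ is stationary.} I would use the energy inequality of Proposition \ref{prop:energy}. Since $u(\cdot,t)$ has compact support for each $t$, $E[u(t)]$ is finite and nonincreasing. It is bounded below on the region where the solution lives, because $F$ is bounded on bounded ranges. The delicate point is that the support may grow, so I would instead use the local energies $E_D$ on balls, or follow \cite{LouZhou2024}. This gives $\int_t^{t+1}\int_{B_R}u^{m-1}u_t^2\to0$. Any limit along $t_n+s$ is then independent of $s$ in the positive phase. By the degenerate parabolic regularity, it is independent of $s$ everywhere, hence a bounded nonnegative radial weak stationary solution.

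\textbf{Step 3: identify the stationary limits.} By Lemma \ref{lem:stationarygeometry} and Lemma \ref{lem:noGS2}, every such limit is either a constant in $[0,\theta]\cup\{1\}$ or, when $N\ge3$, a nonconstant state with central value in $(\theta,1)$. Such a nonconstant state is decreasing with the harmonic exterior \eqref{eq:exterior}. Boundedness and nonnegativity force the matching condition \eqref{eq:matching}, since otherwise the exterior profile becomes negative. The state is therefore positive and decays to $0$, i.e.\ it lies in $\cS$. The $C^2_{loc}$ convergence follows from interior smoothness in the positive phase; near the support edge, $C^2$ is to be understood for $U^m$ or in the paper's local sense.

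\textbf{Step 4: convergence of the whole orbit (the main obstacle).} Constants in $[0,\theta]$ form a continuum, and when $N\ge3$ the set $\Upsilon$ need not be a single point, so connectedness alone does not give convergence. I would argue by contradiction with the zero number. Suppose $\omega(u)$ contains two distinct elements. Connectedness, together with \eqref{eq:TD} in the ground-state family, then yields a stationary state $V$ whose central value lies strictly between $\liminf_{t\to\infty}u(0,t)$ and $\limsup_{t\to\infty}u(0,t)$. Here $V$ is either a constant $c\in(0,\theta]$ or, in the totally disconnected case, connectedness forces the central values to sweep through an interval of constants. Then $u(0,t)-V(0)$ changes sign infinitely often.

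By Theorem \ref{thm:radialzero} applied to $u$ and $V$, $\mathcal Z_0(t)$ is eventually nonincreasing, so it is eventually constant. Moreover, each time $u(0,t)=V(0)$ with a degenerate intersection at the origin, the count drops strictly, as in the Matano/Du--Matano argument. This gives only finitely many such crossings, which is a contradiction.

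The genuine difficulty is making the multiple-zero-at-the-origin drop rigorous in the degenerate setting, and handling $V=c$ with $c>0$ constant, whose intersections with compactly supported $u$ are all positive. For this I would rely on the strict decrease statement of \cite[Theorem~1.2]{LouZhou2024} and the remark extending it to stationary comparisons.
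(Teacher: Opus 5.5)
\paragraph{Main gap: Step 3.} Step 3 contains a false claim, and it is exactly where the proposition needs a dynamical ingredient. In $N\ge3$, boundedness and nonnegativity do \emph{not} force the matching condition \eqref{eq:matching}. By \eqref{eq:exterior}, the harmonic continuation tends to $\theta^m-\alpha(q)R(q)/(N-2)$ as $r\to\infty$.

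\begin{itemize}
\item Only the case $R(q)\alpha(q)>(N-2)\theta^m$ produces a zero. That zero has nonzero flux, so it is excluded by Lemma \ref{lem:stationarygeometry}.
\item When $R(q)\alpha(q)<(N-2)\theta^m$, the profile stays positive and decreases to $q_\infty^m$ with $q_\infty\in(0,\theta)$. By (F3) this happens for every $q$ sufficiently close to $\theta$.
\end{itemize}

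These plateau states are genuine bounded, positive, radial whole-space stationary solutions. They are neither zeros of $f$ nor members of $\cS$, so no static classification can remove them.

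You must rule them out as $\omega$-limits of a solution with data in $\mathscr X$. One way is the capacity/energy argument of Lemma \ref{lem:excludeplateau}:
\begin{itemize}
\item Outer monotonicity keeps $u(\cdot,t_n)<\theta$ beyond a fixed radius, so $F(u)=0$ there.
\item The radius $R_n$ where $u(\cdot,t_n)=q_\infty/2$ tends to infinity.
\item Cauchy--Schwarz then forces $\int|\nabla u^m|^2\ge cR_n^{N-2}$.
\item Hence $E[u(t_n)]\to+\infty$, contradicting the monotonicity in Proposition \ref{prop:energy}.
\end{itemize}
(In $N=2$ the issue does not arise, by Lemma \ref{lem:noGS2}.)

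\paragraph{Step 4: the appeal to \eqref{eq:TD}.} You invoke \eqref{eq:TD}, which is not a hypothesis of the proposition. It is also unnecessary. Suppose the central-value image of $\omega(u)$ is a nondegenerate interval. Take $V$ to be the element of $\omega(u)$ with an intermediate central value; it may be a constant, a plateau state, or a ground state. The remark after Theorem \ref{thm:radialzero} allows any radial stationary comparison function, so your degenerate-zero-at-the-origin argument goes through unchanged. Note also that your reduction ``the interval consists of constants'' rests on the incorrect list from Step 3: plateau central values fill an interval $(\theta,\theta+\epsilon)$.

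\paragraph{Step 2: the energy route to stationarity.} The global energy is not bounded below for spreading solutions, since $-\int F(u)\to-\infty$ when $u\to1$ on growing balls. Nor is $E_D$ on a fixed ball monotone, because the trace of $u$ on $\partial D$ changes in time. So this route does not yield stationarity of $\omega$-limit points. That step should instead come from the intersection-number argument of \cite{LouZhou2024}, which is what the paper's one-line proof relies on.
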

This proposition follows from a slight modification of the proof of Theorem 1.1 in \cite{LouZhou2024}.

\begin{lemma}[Exclusion of positive sub-ignition limits]\label{lem:excludeplateau}
No constant $q\in(0,\theta)$ is an $\omega$-limit. If $N\ge3$, no nonconstant radial stationary profile tending to a number in $(0,\theta)$ is an $\omega$-limit.
\end{lemma}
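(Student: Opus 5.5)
The plan is to use the outer monotonicity \eqref{eq:outermono} to move the information given by local convergence out to the whole exterior region. Once the solution lies below $\theta$ there, it solves the pure porous medium equation in that region, and we compare it with an explicit barrier that contradicts the assumed $\omega$-limit. By Proposition \ref{prop:quasi}, $u(\cdot,t)$ converges in $C^2_{loc}([0,\infty))$ to a single stationary state $V$. Suppose $V$ is either a constant $q\in(0,\theta)$ or, when $N\ge3$, a nonconstant radial profile with $V(r)\to\ell\in(0,\theta)$. In both cases there is $\theta'<\theta$ with $V\le\theta'$ on $[R_1,\infty)$ for some $R_1>R_0$. Here $R_1$ is arbitrary in the constant case; in the nonconstant case it is chosen large using the exterior formula \eqref{eq:exterior}. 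Local uniform convergence then gives $u(R_1,t)\le\theta''<\theta$ for $t\ge T$. Since $u_r<0$ for $R_0<r<b(t)$ by \eqref{eq:outermono}, and $u=0$ for $r\ge b(t)$, we get $u(r,t)\le\theta''$ for all $r\ge R_1$ and $t\ge T$. Hence $f(u)=0$ on $\{r>R_1\}\times(T,\infty)$, so $u$ solves $u_t=\Delta u^m$ there.

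\emph{Constant case.} If $V\equiv q$, then $R_1$ may be taken to be $R_0$ and the bound also covers $[0,R_0]$. So $\|u(\cdot,t)\|_\infty\le\theta''<\theta$ for $t\ge T$, and $u$ is a compactly supported solution of the pure porous medium equation in all of $\R^N$ from time $T$ on. The standard $L^1$--$L^\infty$ smoothing estimate (comparison with a Barenblatt solution) gives
\[
\|u(\cdot,t)\|_\infty\le C\,(t-T)^{-N/(N(m-1)+2)}\longrightarrow0 .
\]
This contradicts $u(0,t)\to q>0$, and the argument works in every dimension $N\ge2$.

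\emph{Nonconstant case, $N\ge3$.} Here I would use a harmonic stationary barrier on the exterior domain $\{|x|>R_1\}$. Set $\bar U(r)=(Ar^{2-N})^{1/m}$. Then $\bar U^m$ is harmonic, so $\bar U$ is an exact stationary solution of $u_t=\Delta u^m$ there. Choose $A$ so large that $AR_1^{2-N}\ge\max\{1,\|u_0\|_\infty\}^m$, using Proposition \ref{prop:basic}. Then $\bar U$ dominates $u$ on the lateral boundary $r=R_1$ for all $t\ge T$. Because $u(\cdot,T)$ is supported in $B_{b(T)}$ and $\bar U>0$ there, enlarging $A$ further makes $\bar U\ge u(\cdot,T)$ on the whole exterior region. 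The comparison principle for the porous medium equation on the exterior domain, applied to bounded weak solutions with the lateral boundary ordered, then gives $u\le\bar U$ for $r\ge R_1$ and $t\ge T$. Letting $t\to\infty$ at a fixed $r$ yields $V(r)\le A^{1/m}r^{(2-N)/m}$ for all $r\ge R_1$. The right-hand side tends to $0$ while $V(r)\to\ell>0$, which is a contradiction.

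The main point needing care is the exterior comparison. It must be justified for weak solutions with a moving free boundary and with $u$ merely dominated on $r=R_1$. I would handle it through the usual $L^1$-contraction argument with a smooth positive regularization, as in \cite{Vazquez2007}. Alternatively, one could note that $\bar U$ is positive and smooth on the exterior domain, so the contact-point argument is only needed in the positive phase of $u$. The use of \eqref{eq:outermono} to control the whole exterior from the single radius $R_1$ is what makes the rest routine.
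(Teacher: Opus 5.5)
\textbf{Gap: the proof assumes the whole orbit converges.} Your opening step reads Proposition \ref{prop:quasi} as giving convergence of the whole orbit, and the nonconstant case then uses $u(R_1,t)<\theta$ for \emph{all} $t\ge T$. That is not available at this point. The lemma concerns $\omega$-limit points, i.e.\ limits along some sequence $t_n\to\infty$. It is also an input to Theorem \ref{thm:conv}, where convergence is obtained only afterwards, through the total-disconnectedness argument. That argument, and hypothesis \eqref{eq:TD} itself, would be pointless if convergence were already known. Taken literally, the list in Proposition \ref{prop:quasi} already omits plateau profiles, so quoting it here is circular.

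A priori the $\omega$-limit set is only a connected set of stationary states. Between the times $t_n$, nothing prevents the orbit from lingering near other members of it, such as the constant $\theta$ or profiles whose ignition radius exceeds $R_1$. The reaction can then switch on in $\{r>R_1\}$. There $\bar U^m=Ar^{2-N}$ is not a supersolution of the full equation: it fails wherever $\theta^m<Ar^{2-N}<1$. So your exterior comparison breaks down.

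The constant case is unaffected in substance. At a single time $t_n$ with $u<\theta$ on $B_{R_0}$, \eqref{eq:outermono} gives $u(\cdot,t_n)<\theta$ everywhere. Comparison with the stationary solution $\theta$ then keeps $u\le\theta$ for all later times, and your smoothing argument finishes. This is exactly the paper's proof.

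\textbf{What the nonconstant case needs.} You need an argument that uses information only at the times $t_n$; the paper uses the energy as a Lyapunov functional. Let $V(r)\to\ell\in(0,\theta)$, fix $R_1$ as you did, and let $R_n>R_1$ be the radius with $u(R_n,t_n)=\ell/2$. Since $V>\ell$ everywhere, local convergence forces $R_n\to\infty$. Put $W=u^m(\cdot,t_n)$ and apply Cauchy--Schwarz to $(\ell/2)^m=-\int_{R_n}^{b(t_n)}W_r\,dr$; this gives $\int_{R_n}^{b(t_n)}W_r^2r^{N-1}\,dr\ge cR_n^{N-2}$. Meanwhile $F(u(\cdot,t_n))$ is supported in the fixed ball $B_{R_1}$. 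Hence $E[u(t_n)]\to+\infty$, contradicting Proposition \ref{prop:energy}.

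Your harmonic-barrier argument is correct, and quite elementary, once the whole orbit is known to converge. It cannot, however, play the role this lemma has in the paper.
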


\begin{proof}
Suppose first that $u(\cdot,t_n)\to q\in(0,\theta)$ locally. If the initial support is contained in $B_{R_0}$, then for large $n$ the solution is below $\theta$ on $B_{R_0}$, and \eqref{eq:outermono} gives the same bound globally. Since the constant $\theta$ is a stationary supersolution, comparison yields $u(\cdot,t)\le\theta$ for all $t\ge t_n$. Hence $f(u)\equiv0$ thereafter, and finite-mass pure-PME smoothing forces uniform convergence to zero, a contradiction.

Now let $N\ge3$ and suppose $u(\cdot,t_n)\to Q$ locally, where $Q(r)\to q_\infty\in(0,\theta)$. Choose $R_0$ beyond the initial support with $q_\infty/2<Q(R_0)<\theta$. For large $n$, outer monotonicity gives $u(r,t_n)<\theta$ for $r\ge R_0$. Let $R_n>R_0$ be the unique radius where $u(R_n,t_n)=q_\infty/2$. Then $R_n\to\infty$. With $W=u^m(\cdot,t_n)$, Cauchy--Schwarz gives
\[
 \int_{R_n}^{b(t_n)}W_r^2r^{N-1}\,dr
 \ge \frac{(q_\infty/2)^{2m}}{\int_{R_n}^{b(t_n)}r^{1-N}\,dr}
 \ge cR_n^{N-2}.
\]
The potential vanishes in the exterior and the energy on the fixed inner ball is bounded below. Thus $E[u(t_n)]\to+\infty$, contradicting Proposition \ref{prop:energy}.
\end{proof}

\begin{theorem}\label{thm:conv}
Assume Assumption \ref{ass:reaction}.
\begin{enumerate}[label=(\roman*)]
\item If $N=2$, every bounded orbit with datum in $\mathscr{X}$ converges locally uniformly to one of $0,\theta,1$.
\item If $N\ge3$ and \eqref{eq:TD} holds, every bounded orbit with datum in $\mathscr{X}$ converges locally uniformly to one of $0,\theta,1$, or a member of $\cS$.
\end{enumerate}
\end{theorem}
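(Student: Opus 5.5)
The natural route to Theorem~\ref{thm:conv} is to combine the quasiconvergence of Proposition~\ref{prop:quasi} with the classification of stationary states in Section~3 and the exclusions in Lemma~\ref{lem:excludeplateau}. By Proposition~\ref{prop:basic} the orbit is bounded. Following \cite{LouZhou2024}, its $\omega$-limit set $\omega(u)$ in $C^2_{loc}([0,\infty))$ is nonempty, compact and connected, and every element is a bounded nonnegative radial whole-space stationary solution. We first list the candidates. Lemma~\ref{lem:stationarygeometry} shows that a nonconstant element has central value in $(\theta,1)$ and is positive everywhere. Constant elements are zeros of $f$, namely points of $[0,\theta]\cup\{1\}$. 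Lemma~\ref{lem:excludeplateau} removes the constants in $(0,\theta)$, leaving only $0$, $\theta$ and $1$.

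For $N=2$, Lemma~\ref{lem:noGS2} shows there are no nonconstant elements. Hence $\omega(u)\subset\{0,\theta,1\}$. This is a discrete set, and $\omega(u)$ is connected, so it is a single point. Locally uniform convergence then follows from compactness of the orbit in $C_{loc}$.

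For $N\ge3$, the plan is to show that a nonconstant element $W$ of $\omega(u)$ has central value in $(\theta,1)$ and is decreasing while $W>\theta^m$. Beyond its first crossing $R$ of $\theta^m$ it is given by \eqref{eq:exterior}, so it tends to a limit $q_\infty^m\in[\theta^m-\alpha R/(N-2),\theta^m)$. If this limit is negative, $W$ reaches zero with nonzero slope, which is impossible by Lemma~\ref{lem:stationarygeometry}. If $q_\infty\in(0,\theta)$, Lemma~\ref{lem:excludeplateau} excludes it. Hence $q_\infty=0$, and \eqref{eq:matching} holds, i.e.\ $W^{1/m}\in\cS$. Thus
\[
\omega(u)\subset\{0,\theta,1\}\cup\cS .
\]

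Next consider the continuous map $\Phi:\omega(u)\to[0,1]$, $\Phi(V)=V(0)$, which is continuous in the $C_{loc}$ topology. Its image is connected and contained in $\{0,\theta,1\}\cup\Upsilon$. Because $\Upsilon\subset(\theta,1)$ is totally disconnected by \eqref{eq:TD}, the whole set $\{0,\theta,1\}\cup\Upsilon$ is totally disconnected. Hence $\Phi(\omega(u))$ is a single point $q$. Radial stationary states are determined by their central value through ODE uniqueness for \eqref{eq:W} with $W'(0)=0$; this is valid since $h$ is locally Lipschitz. Therefore $\omega(u)$ is a single state, and convergence along the full time sequence follows.

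The main obstacle is justifying that the elements of $\omega(u)$ are genuine whole-space weak stationary solutions, so that Section~3 applies. One must also check that Lemma~\ref{lem:excludeplateau} applies to limits along subsequences rather than only to full limits. Its proof already uses only subsequences $t_n$ together with energy monotonicity, so the main point is to import the quasiconvergence statement of \cite{LouZhou2024} (Proposition~\ref{prop:quasi}) in the form: ``every $\omega$-limit point is stationary and $\omega(u)$ is connected.''
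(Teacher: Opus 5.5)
Your proposal is correct and follows essentially the same route as the paper. Both arguments run through the quasiconvergence of Proposition~\ref{prop:quasi}, the stationary-state classification of Section~3 with Lemma~\ref{lem:excludeplateau} removing the sub-ignition limits, and evaluation at the origin, which is injective and maps $\omega(u)$ into the totally disconnected set $\{0,\theta,1\}\cup\Upsilon$. The one step you leave implicit is the existence of a finite first crossing of $\theta^m$ when $N\ge3$. The paper takes this directly from (F3); without it, a profile decreasing to $\theta^m$ from above would not be excluded.
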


\begin{proof}
Proposition \ref{prop:quasi} reduces the problem to whole-space stationary states. Lemma \ref{lem:noGS2} and Lemma \ref{lem:excludeplateau} give the two-dimensional list. In dimensions $N\ge3$, Lemma \ref{lem:stationarygeometry} reduces a nonconstant profile to a positive central branch with central value in $(\theta,1)$. Assumption \ref{ass:reaction}(F3) forces a finite crossing of $\theta$; the harmonic continuation then either tends to a positive sub-ignition constant, decays to zero, or reaches zero with nonzero flux. Lemma \ref{lem:excludeplateau} excludes the first case, and the weak stationary formulation excludes the third. Thus only $\cS$ remains.

Evaluation at the origin is continuous on the $\omega$-limit set and injective on radial stationary states by ODE uniqueness. Its image is compact and connected. In $N=2$ it lies in $\{0,\theta,1\}$; in $N\ge3$ it lies in $\{0,\theta,1\}\cup\Upsilon$, which is totally disconnected. Hence the image, and therefore the $\omega$-limit set, is a singleton.
\end{proof}

\section{Vanishing and spreading parameter sets}

\begin{proposition}[Compact spreading nucleus]\label{prop:nucleus}
There exists a smooth compactly supported function $\eta$, with $0\le\eta<1$, such that every solution whose initial datum dominates a spatial translate of $\eta$ converges locally uniformly to $1$.
\end{proposition}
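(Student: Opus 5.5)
We build the nucleus $\eta$ as a compactly supported stationary subsolution that lies strictly below $1$. Monotonicity then forces the solution started from $\eta$ to increase in time. Theorem~\ref{thm:conv} together with the energy bound identifies the limit as $1$, and comparison transfers the conclusion to any datum lying above a translate of $\eta$.

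\textbf{Construction.} Work with $W=U^m$ and $h(W)=f(W^{1/m})$. Fix $q$ close to $1$ and a large radius $L$, and consider the Dirichlet problem
\[
\Delta W+h(W)=0 \ \text{in } B_L,\qquad W=0 \ \text{on } \partial B_L .
\]
Since $h\ge0$, we can minimise the energy
\[
\int_{B_L}\Big(\tfrac12|\nabla W|^2-H_0(W)\Big),\qquad H_0(s)=\int_0^s h,
\]
over $0\le W\le 1$, $W\in H^1_0(B_L)$. For $L$ large, a trial function equal to $q^m$ on $B_{L-1}$ with a linear cutoff has negative energy. This holds because $H_0(q^m)>0$ while the gradient cost is $O(L^{N-1})=o(L^N)$. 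Hence the minimiser $W_L$ is nontrivial and radially decreasing. By the strong maximum principle, $0\le W_L<1$.

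Set $\eta_0=W_L^{1/m}$. Its zero extension is a weak subsolution of \eqref{eq:PDE} on $\R^N$, because the outward normal derivative of $W_L$ at $\partial B_L$ is $\le0$ and the jump therefore contributes the right sign. Finally, replace $\eta_0$ by a smooth compactly supported $\eta\le\eta_0$ that is still a subsolution, for instance $(W_L-\varepsilon)_+^{1/m}$ mollified. Alternatively, we simply note that the proposition only needs comparison, so the continuous subsolution $\eta_0$ can serve directly.

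\textbf{Monotonicity and identification of the limit.} Let $\underline u$ be the solution with datum $\eta$. Comparison of weak solutions of the PME with Lipschitz reaction (Proposition~\ref{prop:basic}) gives $\underline u(\cdot,t)\ge\eta$, and hence $\underline u_t\ge0$. The solution is bounded by $1$, so $\underline u(\cdot,t)$ increases to a bounded stationary solution $V\ge\eta$, locally uniformly. Theorem~\ref{thm:conv} covers this case, since $\eta$ can be taken radial, and more generally Proposition~\ref{prop:quasi} and Lemma~\ref{lem:noGS2} apply. By these results $V\in\{0,\theta,1\}\cup\cS$.

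We now exclude the alternatives.
\begin{itemize}
\item $V\ne 0$, because $V\ge\eta\not\equiv0$.
\item $V\ne\theta$, because we may arrange $\max\eta=W_L(0)^{1/m}>\theta$ by taking $L$ large. Indeed, as $L\to\infty$, $W_L$ converges locally to $1$; the negative-energy argument shows that $\sup W_L\to1$.
\item $V\notin\cS$ in $N\ge3$. Every $U\in\cS$ has finite energy bounded below, whereas the energy $E[\underline u(t)]$ is nonincreasing by Proposition~\ref{prop:energy} and starts at a very negative value. Local convergence to $U$ together with tail control would give $E[U]\le E[\eta]$. We can choose $L$ so that $E[\eta]<\inf_{U\in\cS}E[U]$. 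This infimum is finite because $\cS$ is compact: central values lie in $[\theta,1]$ and the tails are uniformly harmonic.
\end{itemize}
Hence $V=1$.

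\textbf{Comparison for general data.} If $u_0\ge\eta(\cdot-x_0)$, then by comparison and translation invariance $u\ge\underline u(\cdot-x_0,\cdot)$. Therefore $\liminf u\ge1$ locally uniformly. The bound $\limsup u\le1$ follows from comparison with the ODE solution $\dot y=f(y)$, $y(0)=\max\{1,\|u_0\|_\infty\}$, which tends to $1$. Together these give locally uniform convergence to $1$.

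\textbf{Main obstacle.} The hardest step is excluding $V\in\cS$ in $N\ge3$, specifically passing energy lower semicontinuity through a merely local limit. I would try first to avoid it entirely. Ground states satisfy $U(0)<1$ with $U$ decreasing, while $\sup\eta$ can be made arbitrarily close to $1$ on arbitrarily large balls. So I would choose $L$ such that $\eta>U$ on a large ball for every $U\in\cS$, using compactness of $\Upsilon$ away from $1$: the bound $R(q)\alpha(q)\to\infty$ as $q\uparrow1$ keeps $\Upsilon$ away from $1$. Then $V\ge\eta$ cannot equal any $U\in\cS$ if $\eta(0)>\sup\Upsilon$, which settles the case by direct pointwise comparison.
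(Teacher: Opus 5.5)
Your proposal is correct in substance, but it takes a genuinely different route from the paper.

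\textbf{Comparison with the paper.} The paper builds no stationary object. It takes the one-dimensional travelling front of the degenerate combustion equation, truncates radially a front moving at a slightly smaller speed, and lets the speed deficit absorb the curvature term, relying on the constructions of Garriz and of Lou--Zhou. That expanding subsolution yields spreading, with a positive propagation speed, directly. It never uses the structure of stationary states, so it is insensitive to the dimension, to (F3), and to $\cS$. Its cost is the existence theory for sharp degenerate fronts and a careful truncation near their free boundary.

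Your nucleus is of Aronson--Weinberger/Du--Matano type: a Dirichlet minimiser on a large ball, extended by zero. It is elementary and self-contained, and monotonicity in time replaces quasiconvergence. The price is that identifying the limit uses the paper's radial classification: Lemmas~\ref{lem:stationarygeometry} and~\ref{lem:noGS2}, and for $N\ge3$ assumption (F3) together with $R(q)\alpha(q)\to\infty$ as $q\uparrow1$ from the proof of Proposition~\ref{prop:stationary}. It also gives only local convergence to $1$, with no speed information.

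\textbf{First repair: the smooth nucleus.} Do not try to make the smooth $\eta$ a subsolution. The function $(W_L-\varepsilon)_+$ is not evidently one, since $h(W_L-\varepsilon)\ge h(W_L)$ fails where $h$ increases, e.g.\ just above $\theta^m$. The statement only requires that data above a translate of $\eta$ spread, so take a smooth radial majorant instead, e.g.\ $\eta=(\max\eta_0)\psi$ with $\psi$ a smooth cutoff equal to $1$ on $B_L$. Data dominating a translate of $\eta$ dominate the same translate of $\eta_0$, and your argument runs with $\eta_0$.

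\textbf{Second repair: identifying the limit.} Do not invoke Theorem~\ref{thm:conv}. First, $\eta_0\notin\mathscr X$: Hopf gives $W_L'(L)<0$, so $(\eta_0^{m-1})'$ is infinite at $r=L$. Second, part (ii) of that theorem assumes \eqref{eq:TD}. You do not need it. The monotone limit $V$ is itself a bounded radial weak stationary state with $V(0)\ge\eta_0(0)>\theta$, so either $V\equiv1$ or $V^m=W_q$ with $q=V(0)\in(\theta,1)$.
\begin{itemize}
\item In $N=2$, the second option is excluded by Lemma~\ref{lem:noGS2}.
\item In $N\ge3$, positivity of the continuation \eqref{eq:exterior} forces $R(q)\alpha(q)\le(N-2)\theta^m$. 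Since $R\alpha\to\infty$ as $q\uparrow1$, this confines $q$ to $(\theta,q_0]$ for some $q_0<1$.
\end{itemize}
Choose $L$ with $W_L(0)^{1/m}>q_0$. This is possible because the minimal energy is at most $-H_0(q^m)|B_{L-1}|+CL^{N-1}$ for every $q<1$ and at least $-H_0(\max W_L)|B_L|$, which forces $\max W_L\to1$. This choice excludes ground states and plateau profiles simultaneously. It is exactly the pointwise argument of your final paragraph, and it makes both your energy bullet and any appeal to Lemma~\ref{lem:excludeplateau} unnecessary.
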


\begin{proof}
Combustion porous-medium equations possess the unique positive-speed one-dimensional travelling front. A sufficiently large radial truncation of a front travelling at a slightly smaller speed is a compactly supported subsolution; the strict speed deficit absorbs the radial curvature term. This standard nucleus construction and its comparison consequence are proved in \cite{Garriz2020,LouZhou2024}. We use the resulting compact nucleus as a standard propagation input.
\end{proof}

For an ordered family define
\[
 \Sigma_0=\{\sigma:u_\sigma(t)\to0\},\qquad
 \Sigma_1=\{\sigma:u_\sigma(t)\to1\ \hbox{locally}\}.
\]

\begin{lemma}\label{lem:open}
There are $0<\sigma_-\le\sigma_+\le\infty$ such that
\[
 \Sigma_0=(0,\sigma_-),\qquad \Sigma_1=(\sigma_+,\infty),
\]
where the second interval may be empty. Every parameter in $[\sigma_-,\sigma_+]$ is a transition parameter. The same conclusion holds for a continuous ordered amplitude family in $\mathscr{X}$ whenever it contains both a vanishing member and a spreading member.
\end{lemma}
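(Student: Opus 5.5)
Since $\sigma\mapsto\sigma\phi$ is strictly increasing, the first step is to show that $\Sigma_0$ and $\Sigma_1$ are open intervals adjacent to $0$ and to $\infty$ respectively; the rest then follows. The comparison principle (Proposition~\ref{prop:basic}) gives $u_\sigma\le u_{\sigma'}$ whenever $\sigma<\sigma'$. Hence $\sigma'\in\Sigma_0$ implies $\sigma\in\Sigma_0$, and $\sigma\in\Sigma_1$ implies $\sigma'\in\Sigma_1$. So $\Sigma_0$ is an initial segment of $(0,\infty)$ and $\Sigma_1$ is a final segment.

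Nonemptiness of $\Sigma_0$ comes from small data. If $\sigma\|\phi\|_\infty<\theta$, the constant $\theta$ is a stationary supersolution, so $u_\sigma\le\theta$ for all time. Then $f(u_\sigma)\equiv0$, and the pure PME with finite mass decays uniformly to $0$, as in the proof of Lemma~\ref{lem:excludeplateau}. Thus $(0,\theta/\|\phi\|_\infty)\subset\Sigma_0$. Set $\sigma_-=\sup\Sigma_0$ and $\sigma_+=\inf\Sigma_1$, with $\sigma_+=\infty$ if $\Sigma_1=\emptyset$. Ordering gives $\Sigma_0\cap\Sigma_1=\emptyset$, hence $\sigma_-\le\sigma_+$.

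Openness of $\Sigma_0$ is the step I expect to need the most care. If $\sigma\in\Sigma_0$, uniform convergence to $0$ gives a time $T$ with $\|u_\sigma(T)\|_\infty<\theta/2$. Continuous dependence on initial data in $L^\infty$ on the finite interval $[0,T]$ then gives $\|u_{\sigma'}(T)\|_\infty<\theta$ for $\sigma'$ near $\sigma$. Continuous dependence holds here because the nonlinearity is Lipschitz and, within this class, solutions have uniformly bounded support up to time $T$. From time $T$ the supersolution argument applies again, so $\sigma'\in\Sigma_0$.

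Openness of $\Sigma_1$ uses the nucleus of Proposition~\ref{prop:nucleus}. If $u_\sigma\to1$ locally uniformly, then since $\eta<1$ has compact support, some time $T$ satisfies $u_\sigma(T)>\eta$ on $\operatorname{supp}\eta$. The strict inequality is available because $\max\eta<1$ and the convergence is uniform on that compact set. By continuous dependence, $u_{\sigma'}(T)\ge\eta$ for $\sigma'$ close to $\sigma$, and Proposition~\ref{prop:nucleus} gives $\sigma'\in\Sigma_1$. Hence $\Sigma_0=(0,\sigma_-)$ and $\Sigma_1=(\sigma_+,\infty)$, so $\sigma_-\notin\Sigma_0$ and $\sigma_+\notin\Sigma_1$. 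Every parameter in $[\sigma_-,\sigma_+]$ therefore lies in neither set, which is what ``transition parameter'' means.

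The final sentence of the lemma, about a general continuous ordered amplitude family, uses the same argument. Ordering gives monotonicity, and continuity of the family in $L^\infty$, together with a uniform bound on the supports, replaces the explicit scaling in the continuous-dependence steps. The assumed vanishing and spreading members make both sets nonempty, so the small-data step is not needed.
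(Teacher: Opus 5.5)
Your proposal is correct and follows essentially the same route as the paper: data below $\theta$ vanish, $\Sigma_0$ is open by fixed-time continuous dependence at a time when the supremum is below $\theta$, $\Sigma_1$ is open by strict domination of the compact nucleus of Proposition~\ref{prop:nucleus}, and comparison gives the interval structure. Your write-up is slightly more explicit than the paper's, for instance in using the uniform margin $1-\max\eta$ and in deriving $\sigma_-\le\sigma_+$ from disjointness.
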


\begin{proof}
For small $\sigma$, the datum lies below $\theta$, so the solution is a finite-mass pure PME solution and vanishes uniformly. If a solution vanishes, then at some positive time its global supremum is strictly below $\theta$; fixed-time continuous dependence preserves this inequality, so $\Sigma_0$ is open. Comparison makes it a lower interval.

If a solution spreads, then at a sufficiently large time it is strictly above a translate of the compact nucleus from Proposition \ref{prop:nucleus}. Fixed-time continuous dependence preserves this strict domination, so $\Sigma_1$ is open. Comparison makes it an upper interval. If no member spreads, set $\sigma_+=\infty$. The stated interval structure follows, and parameters in the closed gap are neither vanishing nor spreading.
\end{proof}

\section{Exclusion of the ignition state $\theta$ in dimensions $N\ge3$ }
\label{sec:notheta-normalized}

First we give a sufficient condition for a transition solution converging to some $U\in \cS$ rather than to $\theta$.

\begin{lemma}\label{lem:to V not to theta}
Let $u(t,r)$ be a transition solution of \eqref{eq:PDE}. If there exist $t_1 \geq 0$ and $r_1 >0$ such that
$$
u(t_1, r)> \theta \mbox{ for } r\in [0,r_1],\quad u(t_1, r)<\theta \mbox{ for } r>r_1,
$$
then $u$ converges as $t\to \infty$ to some $U\in \cS$ rather than to $\theta$.
\end{lemma}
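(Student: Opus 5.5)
The plan is to use the classification already proved and reduce everything to excluding the constant $\theta$. By Theorem~\ref{thm:conv}(ii), the transition solution $u$ converges locally uniformly to one of $0$, $\theta$, $1$, or some $U\in\cS$. A transition solution neither vanishes nor spreads, so by Lemma~\ref{lem:open} the limits $0$ and $1$ are excluded. It therefore suffices to show that $u(\cdot,t)\not\to\theta$.

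\textbf{Step 1: a single crossing of $\theta$ persists.} Compare $u$ with the stationary constant $\theta$ using Theorem~\ref{thm:radialzero} together with the remark after it.
\begin{itemize}
\item Since $\theta>0$ everywhere and $u$ has compact support, $u-\theta<0$ near and beyond $b(t)$. No positive intersection can therefore enter through the free boundary.
\item Hence the increase times $t_j$ of Theorem~\ref{thm:radialzero} do not occur after $t_1$, and $\mathcal Z_0(t)\le 1$ for $t>t_1$.
\item If $\mathcal Z_0(t)=0$ at some $t$, then $u(\cdot,t)<\theta$ everywhere. By the argument of Lemma~\ref{lem:excludeplateau}, $u$ would then vanish, which is impossible for a transition solution.
\end{itemize}
So for every $t\ge t_1$ there is a unique $\rho(t)$ with $u>\theta$ on $[0,\rho(t))$ and $u<\theta$ on $(\rho(t),\infty)$. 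In particular $u(0,t)>\theta$ for all $t\ge t_1$.

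\textbf{Step 2: rule out $u\to\theta$ by comparison with near-threshold stationary profiles.} Suppose $u(\cdot,t)\to\theta$ locally uniformly. For $q\downarrow\theta$, let $P_q=W_q^{1/m}$, extended harmonically beyond $R(q)$ by \eqref{eq:exterior}.
\begin{itemize}
\item By (F3), $R(q)\alpha(q)\to0$, so $P_q$ decreases from $q$ and levels off at a constant $q_\infty<\theta$ with $\theta-q_\infty=O(R(q)\alpha(q))$.
\item $P_q$ crosses $\theta$ exactly once, at $R(q)$.
\end{itemize}
I would count intersections of $u(\cdot,t)$ with $P_q$, or with the family of such profiles as $q$ varies, as $t\to\infty$.
\begin{itemize}
\item Near the origin, $u\approx\theta<q$, so $u<P_q$ there.
\item Far out, $u\to0$ while $P_q\to q_\infty>0$, so $u<P_q$ there too.
\item By Step 1, $u>\theta$ on $[0,\rho(t))$, while $P_q<\theta$ for $r>R(q)$. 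So if $\rho(t)>R(q)$ at some large time, $u>P_q$ just inside $\rho(t)$. The difference $u-P_q$ then has at least two sign changes.
\end{itemize}
The aim is to contradict the eventual monotonicity of $\mathcal Z_0$ relative to a one-parameter family of such profiles, chosen at time $t_1$ so that the intersection count is minimal. Alternatively, I would show $\rho(t)\to\infty$ and then use the energy inequality, Proposition~\ref{prop:energy}. On the growing region $\{u>\theta\}$ the potential $F(u)>0$, while the gradient cost stays controlled. This should make $E[u(t)]$ eventually negative and decreasing, which is incompatible with the limit $\theta$: the energy of the limiting configuration near $\theta$ tends to $0$ from the relevant side.

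\textbf{Main obstacle.} The hard point is Step 2, and specifically controlling $\rho(t)$ so that the comparison with $P_q$ produces a genuine contradiction. I would try the zero-number route first. The single-crossing structure from Step 1, combined with (F3), should force $u$ eventually to lie above some $P_q$ on a ball. Comparison then keeps $u\ge P_q$, so $u(0,t)\ge q>\theta$ for all large $t$, which contradicts convergence to $\theta$.
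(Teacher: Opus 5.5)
Your reduction to excluding $\theta$ via Theorem~\ref{thm:conv} and Lemma~\ref{lem:open} is fine. There is, however, a genuine gap in Step 2: the argument is never closed, and the route you settle on at the end fails. Since $u(\cdot,t)$ has compact support and $P_q\ge q_\infty>0$ everywhere, $u\ge P_q$ can never hold globally. ``Comparison keeps $u\ge P_q$'' on a ball would need that ordering on the boundary sphere for all later times, which is exactly what is unknown. The energy alternative is only a heuristic. Most importantly, the phrase ``chosen at time $t_1$ so that the intersection count is minimal'' hides the one fact the proof needs: the count can be made \emph{exactly one}, with $u>P_q$ near the origin.

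This is the paper's argument; its $U_-$ is your $P_q$. First shift $t_1$ slightly so that the crossing at $r_1$ is transversal, $u_r(t_1,r_1)<0$; otherwise a degenerate crossing could split into several intersections with $P_q$. Then $|u(t_1,\cdot)-\theta|$ is bounded below off a neighbourhood of $r_1$, and $-u_r(t_1,\cdot)$ is bounded below on it. The profile $P_q$ decreases from $q$ to $q_\infty$, with $q-\theta\to0$ and, by (F3), $\theta-q_\infty\to0$. By continuous dependence, $P_q\to\theta$ in $C^1$ on compact sets. Hence for $q$ close to $\theta$ the difference $u(t_1,\cdot)-P_q$ has a single transversal zero, positive inside and negative outside. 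As $P_q>0$, no intersection can enter through the free boundary, so the number of intersections stays at most one for $t>t_1$; this is your Step 1 reasoning with $P_q$ in place of $\theta$. Two alternatives remain. Either the crossing persists, and $u(0,t)>q>\theta$ for all $t$. Or it exits through $r=0$, after which $u<P_q$ for all later times and $\limsup_{t\to\infty}u(r,t)\le P_q(r)<\theta$ for $r>R(q)$. Either way $u\not\to\theta$.

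With this choice your own sign observations finish the proof at once. If $u\to\theta$, then at a large time $u-P_q$ is negative at $r=0$, positive at any fixed $r>R(q)$, and negative near $b(t)$, so it has two sign changes. You do not even need $\rho(t)>R(q)$, and Step 1 becomes superfluous.
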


\begin{proof}
By replacing $t_1$ with $t_1+\epsilon$ for a sufficiently small $\epsilon>0$, if necessary, we may assume that $u_r(t_1,r_1)<0$. Then $u(t_1,\cdot)-\theta$ has exactly one nondegenerate zero, namely $r_1$. By (F3) in Assumption \ref{ass:reaction}, there exists $U_-\in\cS$ sufficiently close to $\theta$ such that $u(t_1,\cdot)-U_-(\cdot)$ also has exactly one nondegenerate zero.

If $u(t,0)\geq U_-(0)$ for all $t>t_1$, then $u(t,\cdot)\not\to \theta$ since $U_-(0)>\theta$.

If $u(t, 0)\geq U_-(0)$ for $t\in [t_1, t_2)$ and $u(t_2, 0) = U_-(0)$ for some $t_2 >t_1$,
then by the zero-number property, $r=0$ is the unique degenerate zero of $u(t_2, \cdot) -U_-(\cdot)$,
and $u(t,r) < U_-(r)$ for all $t>t_2$ and $r>0$. Hence $\lim_{t\to \infty} u(t,r)\leq U_-(r) $.
This also implies that $u(t,\cdot)\not\to \theta$ since $U_-( \infty) <\theta$.
\end{proof}

\begin{lemma}\label{lem:u to theta theta(t) to infty}
If $u(t,r)\to \theta$ as $t\to \infty$, then $\theta(t):= \max\{r>0 \mid u(t,r) =\theta \}$ tends to infinity as $t\to \infty$.
\end{lemma}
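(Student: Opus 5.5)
We argue by contradiction, using the locally uniform convergence $u(\cdot,t)\to\theta$ together with the outer monotonicity \eqref{eq:outermono}. First we fix what $\theta(t)$ means. Let $R_0$ be such that the initial support lies in $B_{R_0}$. For each $t>0$, the function $u(t,\cdot)$ is strictly decreasing on $(R_0,b(t))$ and vanishes for $r\ge b(t)$. Hence the set $\{r>0: u(t,r)=\theta\}$ is bounded by $b(t)$, and $\theta(t)$ is well defined whenever this set is nonempty. The local convergence to $\theta$ gives, for large $t$, that $u(t,\cdot)$ is close to $\theta$ on $[0,R]$ for any fixed $R$. This alone does not guarantee that the value $\theta$ is attained, so we must treat this point as well.

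Suppose the conclusion fails. Then there are $t_n\to\infty$ and a bound $K$ such that either $\theta(t_n)\le K$, or the level $\theta$ is not attained at $t_n$. Pick $R>\max\{K,R_0\}$.

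\emph{Case of values above $\theta$ near $R$.} If $u(t_n,r)>\theta$ for some $r\ge R$, then, since $u(t_n,\cdot)$ decreases beyond $R_0$ and eventually reaches $0$, the level $\theta$ is crossed beyond $r$, so $\theta(t_n)>R>K$. This is a contradiction.

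\emph{Remaining case.} Therefore $u(t_n,r)\le\theta$ for all $r\ge R$. Strict monotonicity beyond $R_0$ then gives $u(t_n,r)<\theta$ for $r>R$. Meanwhile, $u(t_n,\cdot)\to\theta$ uniformly on $[0,R]$.

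Two sub-cases now arise.

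(a) For some $n$, $u(t_n,\cdot)\le\theta$ everywhere. Since the constant $\theta$ is a stationary supersolution, comparison gives $u\le\theta$ for all $t\ge t_n$. Then $f(u)\equiv0$, and as in the proof of Lemma \ref{lem:excludeplateau}, finite-mass PME decay forces $u\to0$ uniformly. This contradicts $u\to\theta$.

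(b) Otherwise $\max u(t_n,\cdot)>\theta$ for every $n$. Combined with the previous paragraph, $u(t_n,\cdot)-\theta$ is positive somewhere in $[0,R]$ and negative on $(R,\infty)$. To reach a contradiction we want one crossing structure, namely $u(t_1,\cdot)>\theta$ on $[0,r_1]$ and $u(t_1,\cdot)<\theta$ for $r>r_1$, at some time $t_1$. Lemma \ref{lem:to V not to theta} then says $u$ converges to some $U\in\cS$ and not to $\theta$, which is the desired contradiction.

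The main obstacle is that inside $[0,R]$ the difference $u(t_n,\cdot)-\theta$ may oscillate, so the set where $u>\theta$ need not be a single ball. To handle this, apply the zero-number Theorem \ref{thm:radialzero} to $u$ and the stationary solution $\theta$. For $t>T$ the number of positive intersections is finite and nonincreasing, hence eventually constant, say equal to $k$. Then:

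\begin{itemize}
\item If $k=0$, then eventually either $u\ge\theta$ on the positive phase, which is impossible since $u$ has compact support and is small near $b(t)$, or $u\le\theta$, which is case (a).
\item If $k\ge1$, use Lemma \ref{lem:to V not to theta} in its natural generalization. Let $r_1$ be the innermost crossing. A ground state $U_-$ with $U_-(0)$ close to $\theta$, supplied by (F3), meets $u(t,\cdot)$ near $r_1$. Running the zero-number argument between $u$ and $U_-$ as in that lemma either keeps $u(t,0)\ge U_-(0)>\theta$ for all large $t$, or traps $u$ below $U_-$, whose limit at infinity is $0<\theta$. Either outcome contradicts $u\to\theta$ locally uniformly.
\end{itemize}

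Alternatively, the energy argument of Lemma \ref{lem:excludeplateau} applies. With $\theta(t_n)$ bounded, $u(t_n,\cdot)$ drops from about $\theta$ near $R$ to $0$ at $b(t_n)$. The Cauchy--Schwarz estimate then gives a gradient energy of at least $c\,R^{N-2}$ in dimension $N\ge3$. Letting $R$ grow along the sequence, using the local convergence to $\theta$, pushes $E[u(t_n)]\to+\infty$, which contradicts the monotonicity in Proposition \ref{prop:energy}. We would try this energy route first, since it avoids the fine crossing analysis.
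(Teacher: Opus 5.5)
\textbf{The zero-number branch has a genuine gap.} This is case (b), bullet $k\ge1$. Lemma~\ref{lem:to V not to theta} works only because of its specific sign pattern. There $u(t_1,\cdot)>\theta$ on $[0,r_1]$ and $u(t_1,\cdot)<\theta$ beyond, so $u(t_1,\cdot)-U_-$ has exactly one nondegenerate zero, with $u$ above $U_-$ at the centre. That zero can only disappear through $r=0$, which leaves $u<U_-$ for $r>0$.

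No such dichotomy holds for a general crossing pattern. In the situation where the present lemma is actually used (data in $\mathscr X_2$, with $u<\theta$ on $[0,\theta_1(t))$), you have $u(t,0)<\theta<U_-(0)$ from the outset, so neither alternative is forced. The difference $u-U_-$ may change sign twice, and a drop of its zero number does not place $u$ below $U_-$.

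A telling sign is that this bullet never uses the boundedness of $\theta(t_n)$. If it were valid, it would rule out $u\to\theta$ for every orbit in $N\ge3$. That would prove Theorem~\ref{thm:general trans sol not to theta} in a few lines. It would also make superfluous the rescaled heat-equation analysis of Lemma~\ref{lem:trans sol not to theta}, which takes the present lemma as input.

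A smaller point: (F3) does not supply a ground state near $\theta$. It says the opposite: profiles with central value slightly above $\theta$ level off at a constant in $(0,\theta)$. This is harmless for the contradiction, but it should be stated correctly.

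\textbf{Your fallback energy route is correct.} It is a genuinely different proof from the paper's, which compares $u$ with a stationary supersolution having the harmonic tail $\theta(R/r)^{N-2}$. You should write out the one line where the hypothesis enters. Since $u(t_n,\cdot)<\theta$ for $r>\theta(t_n)$ (or for all $r>0$ if the level is not attained), $F(u(t_n))$ vanishes outside $B_K$. With $M=\max\{1,\|u_0\|_\infty\}$, Cauchy--Schwarz then gives
\[
 E[u(t_n)]\ \ge\ \tfrac12(N-2)\,|S^{N-1}|\,u(t_n,R)^{2m}R^{N-2}-|B_K|\max_{[0,M]}F .
\]
This exceeds $E[u(t_0)]$ once $R$ is large and $n\ge n(R)$, contradicting Proposition~\ref{prop:energy}.

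Both proofs rest on transience in $N\ge3$, namely $\int_R^\infty r^{1-N}\,dr<\infty$. In $N=2$ the analogue is false when $f'_+(\theta)>0$; compare the bounded ignition core in Lemma~\ref{lem:coreseparation2}.

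What your route buys is that it is evaluated only at the isolated times $t_n$ supplied by the negation of $\theta(t)\to\infty$. A barrier comparison on an exterior domain needs the boundary inequality $u(t,R)\le\theta$ at all later times. It therefore most naturally controls $\limsup_{t\to\infty}\theta(t)$ rather than $\liminf_{t\to\infty}\theta(t)$.

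The preliminary case analysis and the zero-number branch can simply be dropped.
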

\begin{proof}
This lemma follows directly from the supersolution method using $\overline{U}(r) = \theta \Big(\frac{R}{r}\Big)^{N-2}$ for suitably large $R$.
\end{proof}

We now consider the following class of special initial data:

$$\mathscr{X}_2:=\{\phi\in\mathscr{X}:\phi\mbox{ has two nondegenerate intersection points with }\theta\mbox{ for }r>0.\}$$ The following lemma contains the key idea of this paper. 

\begin{lemma}\label{lem:trans sol not to theta}
For any $a>0$, a transition solution $u$ with initial datum in $\mathscr{X}_2$ converges as $t\to \infty$ to some $U\in \cS$
rather than to $\theta$.
\end{lemma}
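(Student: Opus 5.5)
We argue by contradiction: suppose $u$ is a transition solution with $u_0\in\mathscr X_2$ and $u(\cdot,t)\to\theta$ locally uniformly. The aim is to reach, at some finite time $t_1$, the one-crossing configuration of Lemma \ref{lem:to V not to theta}. That lemma then forces convergence to some $U\in\cS$, which is the desired contradiction.

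\textbf{Counting crossings with $\theta$.} Since $u_0\in\mathscr X_2$, the function $u_0-\theta$ has exactly two nondegenerate positive zeros $r_a<r_b$. For $r<r_a$ the datum lies below $\theta$, on $(r_a,r_b)$ it lies above, and beyond $r_b$ it lies below. The constant $\theta$ is a stationary solution, so Theorem \ref{thm:radialzero} (with the remark allowing stationary states) applies. It shows that the number of positive intersections of $u(\cdot,t)$ with $\theta$ is at most two for all $t>0$, and that it drops whenever a degenerate intersection occurs. By Lemma \ref{lem:u to theta theta(t) to infty}, the outer crossing $\theta(t)\to\infty$, so at least one crossing survives for all large $t$.

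\textbf{Reduction to the persistent two-crossing case.} If at some time the inner crossing disappears, two things can happen. It may merge into the origin, so that $u(t,0)>\theta$. It may instead annihilate with the outer crossing, but that would put $u<\theta$ everywhere, and then the solution vanishes as in the proof of Lemma \ref{lem:excludeplateau}, contradicting transition. In the first case we obtain exactly the profile of Lemma \ref{lem:to V not to theta}, after a small time shift to make the crossing nondegenerate. So we may assume that for all $t$ there are exactly two crossings $\rho(t)<\theta(t)$, with $u(t,0)<\theta$ and $u>\theta$ on the annulus $(\rho(t),\theta(t))$.

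\textbf{Normalized annular perturbation (main obstacle).} We must show that an annulus of supra-ignition values, with $u\to\theta$ locally, cannot persist forever when $N\ge3$.

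On the inner region $\{u<\theta\}$ near the origin, the reaction vanishes. Writing $u=\theta(1+w)$, the equation there becomes, to leading order, $w_t\approx m\theta^{m-1}\Delta w$, which is the heat equation after rescaling time by $m\theta^{m-1}$. The reaction is active only on the annulus, and it has a sign: $f\ge0$ and, by (F2), $f$ is nondecreasing near $\theta$. It therefore acts as a nonnegative source for $w$ there.

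The plan is to normalize by $\varepsilon(t)=\theta-u(t,0)\to0$ and study $w/\varepsilon$ on balls $B_{\rho(t)}$. I expect $\rho(t)\to\infty$, or at least that the dip stays in a region where $w<0$. Using the heat kernel representation, the nonnegative source on the annulus feeds positive mass towards the center. In $N\ge3$ the Green's function is integrable at infinity, so this contribution decays only like $|x|^{2-N}$. The negative dip, being a sub-ignition region, has no source and disperses. I expect the comparison of these two effects to show that $u(t,0)$ must eventually exceed $\theta$, i.e.\ the inner crossing is absorbed at the origin in finite time, which is the reduction above.

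Concretely, I would try to construct a subsolution of the form $\theta+\varepsilon\,(\text{heat-kernel-based radial bump})$ placed on the annulus, and invoke the zero-number count to rule out a third crossing. The hard part will be controlling the degenerate diffusion coefficient and the error terms in this linearization uniformly, using $\varepsilon\to0$ together with the local convergence to $\theta$.
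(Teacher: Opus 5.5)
Your reduction is sound and matches the paper's. It uses the zero-number count against the stationary solution $\theta$. If both crossings annihilate, then $u<\theta$ everywhere, which forces vanishing. If the inner crossing is absorbed at the origin, Lemma~\ref{lem:to V not to theta} applies. So one may assume $0<\theta_1(t)<\theta_2(t)$ for all $t$.

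The gap is the central step, which you only describe as an expectation, and the version you sketch would not work as stated. Normalizing by $\varepsilon(t)=\theta-u(t,0)$ on $B_{\rho(t)}$ gives no usable limit. The supra-ignition bump may be much larger than $\varepsilon(t)$, so $(u-\theta)/\varepsilon(t)$ need not stay bounded on the annulus. Restricting to $B_{\rho(t)}$ also discards exactly the positive part that is supposed to compete with the dip. Your expectation $\rho(t)\to\infty$ is false: the inner crossing precedes the maximum point $\xi(t)$, which stays in $[0,b(0)]$ by \eqref{eq:outermono}.

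The paper instead normalizes by $H(t_n)=\|u(t_n,\cdot)-\theta\|_{L^\infty([0,\theta_2(t_n)])}$. This is attained in $[0,b(0)]$ and sees both the dip and the peak. Using $f(\theta+s)\le(f'(\theta)+\delta_n)s$, it obtains $-1\le\eta_n\le0$ on the inner region and $0\le\eta_n\le e^{K_nt}$ on the annulus. Parabolic estimates with a diagonal argument then give a nontrivial limit $\tilde\eta$. After scaling $m\theta^{m-1}$ to $1$, it solves the heat equation where $\tilde\eta\le0$ and $\tilde\eta_t=\Delta\tilde\eta+f'(\theta)\tilde\eta$ where $\tilde\eta\ge0$. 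Every conclusion is read off for $\tilde\eta$ at a fixed finite time and transferred to $u(t_n+\cdot,\cdot)$ by local uniform convergence. So the uniform-in-time control of linearization errors that you single out as the hard part is never needed.

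Second, the outcome you aim for, that $u(t,0)$ must eventually exceed $\theta$, cannot be proved in general. Also, $N\ge3$ does not enter through the Green's function of the source: the heat-kernel competition in the paper works in any dimension, and the dimension is used through Lemma~\ref{lem:to V not to theta}. For the limit problem, the large-time sign at the centre is governed by the total mass, via $(4\pi t)^{N/2}z(t,0)=\int_{\R^N}e^{-|y|^2/4t}\tilde\eta(\tau,|y|)\,dy$.

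Positive mass yields $u(t_n+\tau+T,0)>\theta$ for large $n$, contradicting Lemma~\ref{lem:to V not to theta}. Negative mass yields $\tilde\eta<0$ at the maximum point, hence $u<\theta$ everywhere at a finite time, contradicting the transition property. Both branches have to be closed.

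If $f'(\theta)>0$, exponential growth of the positive mass against the bounded negative mass forces the first branch. If $f'(\theta)=0$, the source disappears in the limit and the dip may win. The exactly balanced case $\int_{\R^N}\tilde\eta(0,|x|)\,dx=0$ then needs a separate second-moment argument. It gives $\tilde\eta(t,0)\le-ct^{-N/2-1}$, which contradicts the bound $\tilde\eta(t,0)\ge-Me^{-\delta t}$ from the cosine subsolution on $B_{b(0)}$. Neither this mass trichotomy nor the balanced case appears in your plan, so as it stands it does not prove the lemma.
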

\begin{proof}
It follows directly that $0<u(t,\cdot)<\theta$ in $[0,\theta_1(t)) \cup (\theta_2(t), \infty)$ and $\theta < u(t,\cdot; \chi_{[a, a+l_a]})<1$ in
$(\theta_1(t), \theta_2(t))$ for $0<t<\epsilon$. We argue by contradiction and assume that $u$ converges to $\theta$.

We claim that $\theta_1(t)>0$ for every $t>0$; otherwise, Lemma \ref{lem:to V not to theta} would imply that $u$ does not converge to $\theta$. Furthermore, the maximum point satisfies $\xi(t)< b(0)$ for all $t>0$ by the reflection method.

We now study $u$ for $r\in I(t):= [0, \theta_2(t)] = I_-(t) \cup I_+(t)$, where
$I_-(t) := [0, \theta_1(t)]$, $I_+(t):= [\theta_1(t), \theta_2(t)]$.

We have $\theta_2(t)\to\infty$ by Lemma \ref{lem:u to theta theta(t) to infty}, and there exists a sequence of times
$\{t_n\}$ with $t_n\to \infty$ such that
\begin{equation}\label{u-theta < 1/n}
\|u(t,\cdot)-\theta\|_{L^\infty (I(t))} = \|u(t,\cdot)-\theta\|_{L^\infty ([0, b(0)])} \leq  \frac{1}{n},
\quad \mbox{when } t\geq t_n.
\end{equation}
For each $n$, define
$$
H(t_n) := \|u(t_n, \cdot)-\theta\|_{L^\infty (I(t_n))},\quad \eta_n (t,r) := \frac{u(t_n +t, r) -\theta}{H(t_n)} \mbox{ for } t\geq 0,\ r\in  I(t_n +t).
$$
Then $\|\eta_n(0,\cdot)\|_{L^\infty (I(t_n))} =1$ and $\eta_n$ solves
$$
(\eta_n)_t =\frac{1}{H(t_n)} \Delta (H(t_n) \eta_n +\theta)^m + \frac{1}{H(t_n)} f(H(t_n) \eta_n +\theta),\quad r\in I(t_n +t),\ t\geq 0.
$$

By \eqref{u-theta < 1/n} there exists a sequence $\{\delta_n\}\subset (0,1)$ with $\delta_n\to 0\ (n\to \infty)$
such that
$$
f \big( H(t_n) \eta_n (t, r) +\theta \big) \left\{
 \begin{array}{ll}
   = 0, & r\in I_-(t_n +t),\ t\geq 0,\\
  \leq K_n H(t_n) \eta_n (t,r) , & r\in I_+(t_n +t),\ t \geq 0,
  \end{array}
  \right.
$$
with $K_n := f'(\theta) +\delta_n$.
Hence, when $r\in I_+ (t_n +t)$ and $t\geq 0$ we have $\eta_n (t,r)>0$ and
$(\eta_n)_t \leq \Delta \eta_n + K_n \eta_n$. By comparison we have
$$
0\leq \eta_n (t,r) \leq e^{K_n t} \|\eta_n(0,\cdot)\|_{L^\infty (I(t_n))} = e^{K_n t} \quad \mbox{ for }
r\in I_+ (t_n +t), \ t\geq 0.
$$
Similarly, we have $0\geq \eta_n (t,r) \geq -1$ for $r\in I_- (t_n +t)$ and $t\geq 0$.
Therefore, for any given $T>0,\ M> b(0)$, there exists $C$ depending on $T$ but not on $n$ and $M$ such that
$\|\eta_n(t,r)\|_{L^\infty([0,T]\times [0,M])} \leq C$. Using Schauder estimates, we have
$\|\eta_n(t,r)\|_{C^{1+\nu/2, 2+\nu} ([0,T]\times [0,M])} \leq \widetilde{C}$ for any $\nu\in (0,1)$.
By a diagonal argument, there exist a subsequence, still denoted by $\{n\}$, two $C^1$ functions $\tilde{\theta}_1(t)$ and $\tilde{\xi}(t)\leq b(0)$, and a function $\tilde{\eta}(t,r)\in C^{1,2}([0,\infty)\times[0,\infty))$ such that, as $n\to\infty$,
$$
\theta_1(t_n +t) \to \tilde{\theta}_1(t),\ \ \xi(t_n +t) \to \tilde{\xi}(t) \ \ \mbox{ in the topology of } C^1_{loc}([0,\infty)),
$$
$$
\eta_n(t,r) \to \tilde{\eta} (t,r)  \mbox{ in the topology of } C^{1,2}_{loc} ([0,\infty) \times [0,\infty)).
$$
Therefore, $\|\tilde{\eta}(0,\cdot)\|_{L^\infty([0,\tilde{\xi}(0)])}=1$, and
$$
\tilde{\eta}(t,r) \leq 0 \mbox{ and } \tilde{\eta}_t = m\theta^{m-1}\Delta \tilde{\eta},\quad \mbox{ when } r\in [0,\tilde{\theta}_1(t)],\ t\geq 0,
$$
$$
\tilde{\eta}(t,r) \geq 0 \mbox{ and } \tilde{\eta}_t =  m\theta^{m-1}\Delta \tilde{\eta} +f'(\theta)\tilde{\eta},\quad \mbox{ when } r\in [\tilde{\theta}_1(t), \infty),\ t\geq 0.
$$
Without loss of generality, we may assume that $m\theta^{m-1}=1$ and simplify the problem to

$$
\tilde{\eta}(t,r) \leq 0 \mbox{ and } \tilde{\eta}_t = \Delta \tilde{\eta},\quad \mbox{ when } r\in [0,\tilde{\theta}_1(t)],\ t\geq 0,
$$
$$
\tilde{\eta}(t,r) \geq 0 \mbox{ and } \tilde{\eta}_t =  \Delta \tilde{\eta} +f'(\theta)\tilde{\eta},\quad \mbox{ when } r\in [\tilde{\theta}_1(t), \infty),\ t\geq 0,
$$
otherwise, the same normalization follows by a suitable rescaling.
\medskip

{\it Case 1}. We first consider the case where $f'(\theta)>0$.

We claim that $\tilde{\eta}(0,\tilde{\xi}(0)) >0$. Indeed, otherwise, by the monotonicity of $\eta_n (0,r)$
we have $\tilde{\eta}(0,r) \leq 0$ for all $r\geq 0$ and $\tilde{\eta}(0,0)<0 $. Then $\tilde{\theta}_1(0)=\infty$,
$\tilde{\eta}(t,r)$ satisfies the heat equation and by the strong comparison principle we have
$\tilde{\eta}(1,r)<-\varepsilon$ for $r\in [0,b(0)]$ and some $\varepsilon>0$.
Since $\eta_n (1,\cdot ) \to \tilde{\eta}(1, \cdot)$ uniformly in $[0,b(0)]$ as $n\to \infty$, we have
$\eta_n (1,\cdot)<0$ in $[0,b(0)]$ for sufficiently large $n$; hence $u(t_n+1,\cdot)-\theta<0$ there as well.
Therefore, $u(t,r)<\theta$ for all $r\geq 0$ and $t>t_n +1$, contradicting our assumption $u\to \theta$.
This proves $\tilde{\eta}(0,\tilde{\xi}(0)) >0$.

Combining this claim with the fact that $\tilde{\eta}(0,r)\geq 0$ for all $r>\tilde{\xi}(0)$ and using
the maximum principle we have $\tilde{\eta}(1,r)>0$ for all $r\geq b(0) \geq \tilde{\xi}(1)$.
Thus we have $\tilde{\eta}(1,r)> \psi(r)$ for $r\geq b(0)$, where $\psi(r)$ is a positive function defined by
$$
\psi(r)=
\left\{
 \begin{array}{ll}
 \tilde{\eta}(1,b(0) +2), & b(0)\leq r\leq b(0) +1,\\
 \mbox{smooth decreasing function}, & b(0) +1 \leq r \leq b(0) +3,\\
 \tilde{\eta}(1,r), & r\geq b(0) +3.
 \end{array}
 \right.
$$
Consider the problem
$$
\left\{
 \begin{array}{ll}
 w_t = \Delta w + f'(\theta) w, & r>b(0), \ t>0,\\
 w_r (t,b(0)) =0, & t>0,\\
 w(0,r) =\psi(r), & r\geq b(0).
 \end{array}
 \right.
$$
Clearly, $w e^{-f'(\theta)t}$ satisfies the heat equation outside the ball $\{|x|<b(0)\}$ with a Neumann boundary condition. Hence
$$
\frac{d}{dt} \int_{|x|>b(0)} w(t,|x|)e^{-f'(\theta)t} dx = \int_{|x|>b(0)}
\Big( w(t,|x|)e^{-f'(\theta)t}\Big)_t dx = \int_{|x|>b(0)} \Delta \Big(w(t,|x|)e^{-f'(\theta)t}\Big) dx = 0
$$
Therefore, we have 
$$
\Psi:= \int_{|x|> b(0)}  \psi (|x|)  dx=\int_{|x|> b(0)}  w(t,|x|)e^{-f'(\theta)t} dx.
$$
By comparison we have $\tilde{\eta}(t+1, r)>w(t,r)$ for $t>0,\ r\geq b(0)$. Therefore,
$$
\int_{|x|> \tilde{\theta}_1 (t)}  \tilde{\eta} (t,|x|) dx >
\int_{|x|> b(0)}  w(t-1,|x|)dx  = \Psi e^{f'(\theta)(t-1)} > -\int_{|x| \leq  \tilde{\theta}_1 (t) }  \tilde{\eta} (t,|x|)dx
$$
when $t\geq \tau := \{\ln [b(0)/ \Psi +1]\} / f'(\theta) +1 $. Consequently we have $\int_{\R^N} \tilde{\eta}(\tau,|x|) dx >0$.

We now derive a contradiction. Let $l:=\tilde{\theta}_1(\tau)$. Since $\int_{\R^N}\tilde{\eta}(\tau,|x|)\,dx>0$, we have
$$
\int_l^\infty \tilde{\eta}(\tau , r) r^{N-1} dr > \int_0^l - \tilde{\eta}(\tau , r) r^{N-1} dr + \varepsilon,\quad
\mbox{ for some } \varepsilon >0.
$$
Hence for sufficiently large $R$ and sufficiently small $\delta>0$ we have
\begin{equation}\label{R-delta}
\int_l^R (1-\delta) \tilde{\eta} (\tau, r) r^{N-1} dr > \int_0^l - \tilde{\eta}(\tau, r) r^{N-1} dr.
\end{equation}
Consider the following problem
\begin{equation}\label{eq-z}
 \left\{
  \begin{array}{ll}
  z_t =\Delta z, & r>0,\ t>0,\\
  z_r (t,0)=0, & t>0,\\
  z(0,r) =\tilde{\eta} (\tau, r),\ & r\geq 0.
  \end{array}
  \right.
\end{equation}
The solution of this problem can be represented by the fundamental solution; hence
\begin{eqnarray*}
(4\pi t)^{N/2} z(t,0) & = &  \int_{\R^N} e^{-\frac{|y|^2}{4t}} \tilde{\eta}(\tau, |y|) dy
 =  \int_0^{\infty}  e^{-\frac{r^2 }{4t}} \tilde{\eta}(\tau, r) r^{N-1} dr\\
 & \geq & \int_0^l \tilde{\eta} (\tau, r) r^{N-1} dr+ \int_l^R e^{-\frac{R^2}{4t}} \tilde{\eta}(\tau, r) r^{N-1} dr,\quad t>0,
\end{eqnarray*}
for large $R$ as in \eqref{R-delta}. Therefore, when $t\geq T := R^2 /[-4 \ln (1-\delta)] +1$ with
$\delta$ given in \eqref{R-delta}, we have $z(t,0)>0$.
Clearly $\tilde{\eta}$ is an upper solution of the problem \eqref{eq-z} and hence
$\tilde{\eta}(t+\tau ,0)\geq z(t,0)>0$ for $t\geq T$.

Since $\eta_n\to\tilde{\eta}$ uniformly on $[0,\tau+T]\times[0,1]$ as $n\to\infty$, we have $\eta_n(\tau + T, 0)>0$ for sufficiently large $n$, that is,
$u(t_n + \tau +T, 0)>\theta$ for such $n$. This implies that $\theta_1(t)$ moves to $r=0$ and disappears before
$t=t_n +\tau +T$. By Lemma \ref{lem:to V not to theta}, $u\not\to \theta$, contradicting our assumption.

\medskip
{\it Case }2. $f'(\theta)=0$. In this case $\tilde{\eta}(t,r)$ satisfies the heat equation $\tilde{\eta}_t =
\Delta \tilde{\eta}$ for all $t>0$ and $r\geq 0$. We derive contradictions for the following three subcases:
$$
Subcase\ 1. \int_{\R^N} \phi(|x|)dx >0;\quad
Subcase\ 2. \int_{\R^N} \phi(|x|)dx <0;\quad
Subcase\ 3. \int_{\R^N} \phi(|x|)dx =0,
$$
where $\phi(r)=\tilde{\eta}(0,r)$. 

{\it Subcase }1. In this case, we obtain a contradiction as in Case~1.

{\it Subcase }2. Set $l_0:=\tilde{\theta}_1(0)$. For sufficiently small $\delta>0$, we have
\begin{equation}\label{int<0}
\int_{|x|\geq l_0} \phi(|x|) dx  < (1-\delta) \int_{|x|\geq l_0} -\phi (|x|) dx.
\end{equation}
Note that $\tilde{\eta}$ can be expressed using the fundamental solution. Since $\tilde{\xi}(t)$ is uniformly bounded, we have
\begin{eqnarray*}
(4\pi t)^{N/2} \tilde{\eta}(t,\tilde{\xi}(t)) & = &
 \Big( \int_{|y|\leq l_0} + \int_{|y|\geq l_0} \Big) e^{-\frac{|\tilde{\xi}(t) -y|^2}{4t}} \phi(|y|) dy  \\
 & \leq & \int_{|y|\leq l_0} e^{-\frac{b^2(0)}{4t}} \phi(|y|) dy  + \int_{|y|\geq l_0} \phi (|y|) dy.
\end{eqnarray*}
By \eqref{int<0} the right-hand side is negative for $t \geq T_1 := R^2_a / [-4\ln (1-\delta)]$.
Hence $\tilde{\eta}(t,\tilde{\xi}(t))<0$ for such $t$.
As in the beginning of the proof of Case 1, this again reduces to $u(t_n + t, r)<\theta$ for all large $n$, $t\geq T_1$ and
$r\geq 0$. Therefore, $u\not\to \theta$, contradicting our assumption.

{\it Subcase }3. Since $\phi(r)$ is negative in $[0, \tilde{\theta}_1 (0))$ and positive
in $(\tilde{\theta}_1 (0), \infty)$, from $\int_{\R^N} \phi(|x|)dx = \int_0^\infty \phi(r) r^{N-1} dr =0$ we know that
\begin{equation}\label{int <0 >2A}
\tilde{\eta}(t,0):=\int_0^\infty e^{-\frac{r^2}{4t}} \phi(r) r^{N-1} dr <0 \mbox{ for any } t>0, \mbox{ and }
\int_0^\infty \phi(r) r^{N+1} dr > 3A \mbox{ for some } A>0.
\end{equation}
So there exists a large $R>b(0)$ such that
$\int_0^R \phi(r) r^{N+1} dr >2A$. Consequently, for all sufficiently large $t$ (say, $t>T_2$), we have
\begin{equation}\label{int 0R >A}
\int_0^R e^{-\frac{r^2}{4t}} \phi(r) r^{N+1} dr >A.
\end{equation}
$\tilde{\eta}$ solves the heat equation and so it can be expressed by the fundamental solution as
$$
\tilde{\eta}(t,0) = \frac{1}{(4\pi t)^{N/2}} \int_{\R^N} e^{-\frac{r^2}{4t}} \phi(r) r^{N-1}dr.
$$
Differentiating with respect to $t$ and using \eqref{int <0 >2A}, \eqref{int 0R >A} we have
\begin{eqnarray*}
\tilde{\eta}_t (t,0) & =& \frac{1}{(4\pi t)^{N/2}} \Big[ -\frac{N}{2t} \int_0^\infty e^{-\frac{r^2}{4t}} \phi(r) r^{N-1} dr
                           + \frac{1}{4t^2} \int_0^\infty  e^{-\frac{r^2}{4t}} \phi(r) r^{N+1} dr \Big]\\
& > &   \frac{C_1}{t^{N/2 +2}} \int_0^\infty  e^{-\frac{r^2}{4t}} \phi(r) r^{N+1} dr >
\frac{C_1}{t^{N/2 +2}} \int_0^R  e^{-\frac{r^2}{4t}} \phi(r) r^{N+1} dr \\
& > & \frac{C_1 A}{t^{N/2 +2}},\quad t>T_2,
\end{eqnarray*}
where $C_1$ is a positive constant depending only on $N$. For any $t>T_2$, integrating this inequality over
$[t, \infty)$ and using the fact that $\tilde{\eta}(t,0)\to 0\ (t\to \infty)$ we have
\begin{equation}\label{decay rate1}
\tilde{\eta}(t,0)< - \frac{C_1 A}{N/2 +1} \cdot \frac{1}{t^{N/2 +1}},\quad t>T_2.
\end{equation}

As before, $\tilde{\eta}(t,0)<0$ for all $t\geq0$. We next consider the auxiliary problem
$$
\left\{
 \begin{array}{ll}
 \tilde{z}_t = \tilde{z}_{rr}, & 0<r<b(0),\ t>0,\\
 \tilde{z}_r (t,0)= \tilde{z}(t,b(0))=0,\ & t>0,\\
 \tilde{z}(0,r) = -M\cos \frac{\pi r}{2b(0)}, & 0\leq r \leq b(0).
 \end{array}
 \right.
$$
Clearly, $\tilde{z}(t,r)=-M \cos \frac{\pi r}{2b(0)} e^{-\delta t}$ for $\delta:= \pi^2 /(4R^2_a)$
and $\tilde{\eta}$ is an upper solution of this problem. Thus, when $M>0$ is sufficiently large, we have
$$
\tilde{\eta}(t,0)>\tilde{z}(t,0)= -M e^{-\delta t}, \quad t>0,
$$
contradicting the decay rate in \eqref{decay rate1}.

Thus all three subcases are impossible, yielding a contradiction in Case~2.
This proves the lemma.
\end{proof}

Using this lemma, we can exclude $\theta$ from the list of possible $\omega$-limits of any transition solution.

\begin{theorem}\label{thm:general trans sol not to theta}
Assume that $u_0 \in \mathscr{X}$ with ${\rm spt}(u_0)\subset [0,R_0]$ for some $R_0>0$. If $u(t,r;u_0)$
is a transition solution, then $u$ converges as $t\to \infty$ to some $U\in \cS$ rather than to $\theta$.
\end{theorem}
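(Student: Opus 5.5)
Since $N\ge3$ here, I would argue by contradiction: suppose $u\to\theta$ locally uniformly, and reduce to the special class $\mathscr X_2$ treated in Lemma~\ref{lem:trans sol not to theta}.

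\textbf{Step 1: simplify the crossing structure with $\theta$.} By Proposition~\ref{prop:basic}, $u_r<0$ for $r>R_0$, so $u(t,\cdot)-\theta$ has at most one zero beyond $R_0$. By the zero-number Theorem~\ref{thm:radialzero}, applied to $u$ and the stationary state $\theta$, the number of positive zeros of $u(t,\cdot)-\theta$ is bounded and eventually nonincreasing. For large $t$ these zeros are nondegenerate, except at a discrete set of times. Since $u(t,\cdot)\to\theta$ while $\theta_2(t)\to\infty$ (Lemma~\ref{lem:u to theta theta(t) to infty}), the profile is eventually above $\theta$ somewhere and below $\theta$ far out. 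The case of a single crossing, with $u>\theta$ near $0$, is excluded by Lemma~\ref{lem:to V not to theta}. So at some large time $t_1$ there are at least two nondegenerate crossings.

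\textbf{Step 2: reduce to exactly two crossings.} I would rerun the normalized blow-up of Lemma~\ref{lem:trans sol not to theta} with $I_\pm(t)$ replaced by the finitely many alternating sign intervals on $[0,\theta_2(t)]$. After normalization by $H(t_n)$, the limit $\tilde\eta$ is a radial solution of $\tilde\eta_t=\Delta\tilde\eta+f'(\theta)\tilde\eta^+$. Its sign changes occur only inside the fixed ball $B_{R_0}$ (equivalently $B_{b(0)}$), and $\tilde\eta\ge0$ on the outermost interval. The arguments of Case~1 and Case~2 only used three facts:
\begin{itemize}[label={}]
\item $\tilde\eta\le0$ near the origin region, which holds up to sign on the innermost interval;
\item $\tilde\eta\ge0$ outside a bounded set;
\item the mass and heat-kernel estimates are insensitive to interior oscillation.
\end{itemize}
Each branch would therefore yield either $\tilde\eta<0$ on $[0,b(0)]$, so $u<\theta$ eventually and $u\to0$, or $\tilde\eta(t,0)>0$ with the outer structure giving a single crossing, which is excluded by Lemma~\ref{lem:to V not to theta}.

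In Case~1 ($f'(\theta)>0$), the Neumann exterior comparison makes the mass outside $b(0)$ grow exponentially. Hence the total mass becomes positive, and the heat-kernel argument forces $\tilde\eta(\cdot,0)>0$. Subcases~2 and~3 need the innermost sign to be negative. If instead $u>\theta$ at the origin, I would pair the inner positive region with Lemma~\ref{lem:to V not to theta} using the comparison with $U_-$ from (F3). That is, replacing $\theta$ by $U_-\in\mathcal S$ close to $\theta$ still gives finitely many crossings, and a decrease of the zero number forces $u\le U_-$ eventually, which contradicts $u\to\theta$.

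\textbf{Main obstacle.} I expect the hardest step to be Subcase~3 (zero mass, $f'(\theta)=0$) with multiple sign changes. There the second-moment argument $\int\phi r^{N+1}dr>0$ relied on $\phi$ having a single sign change from negative to positive. With more changes this moment could have either sign. I would first try to iterate: reduce the zero number by one at a time via the nondegenerate-crossing drop. If zero number drops are only finitely many, then eventually (for $t\ge t_1$) the crossing count is constant, say $k$. Passing to the blow-up limit preserves at most $k$ sign changes. Then I would choose the moment $\int\phi\,r^{N-1+2j}dr$ of the lowest nonvanishing order $j\le k$. Its sign and the heat-kernel expansion at $r=0$ give a decay rate $t^{-N/2-j}$. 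For $\tilde\eta(t,0)<0$ this rate contradicts the Dirichlet exponential lower bound as in~\eqref{decay rate1}. If instead $\tilde\eta(t,0)>0$, we fall back on Lemma~\ref{lem:to V not to theta}.
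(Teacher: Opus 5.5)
Your outline is the paper's route: argue by contradiction, note that the number $k$ of crossings of $u(t,\cdot)$ with $\theta$ is eventually constant, use Lemmas~\ref{lem:to V not to theta} and~\ref{lem:trans sol not to theta} when $k\le 2$, and rerun the normalized blow-up when $k\ge3$. The paper only calls that last step a ``slight modification''. There is, however, a genuine gap in how you leave the blow-up when $k\ge3$.

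\textbf{The failing step.} Both in Case~1 and in your moment argument you conclude from $\tilde\eta(t,0)>0$ that the outer structure leaves a single crossing. With several sign changes inside $B_{R_0}$ this is false. A limit with sign pattern $+,-,+$ on $[0,R_0]$ has $\tilde\eta(t,0)>0$, yet $u-\theta$ still has three crossings.

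\textbf{The $U_-$ fallback does not repair it.} Lemma~\ref{lem:to V not to theta} works only because $u-U_-$ has exactly one crossing, so a contact $u(t_2,0)=U_-(0)$ empties the whole zero set. If $u-U_-$ has pattern $+,-,+,-$, a contact at the origin removes only the innermost crossing and leaves $-,+,-$. Nothing then forces $u\le U_-$.

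\textbf{The fix.} You need a strict sign of $\tilde\eta(t,\cdot)$ on the whole ball $\overline{B_{R_0}}$, not just at $r=0$. All inner crossings lie in this ball, because the last crossing escapes to infinity and $u_r<0$ beyond $R_0$. Such a sign is available:
\begin{enumerate}
\item[(a)] In Case~1, the truncated heat-kernel lower bound for $z$ is uniform in $|x|\le R_0$.
\item[(b)] In Case~2, let $\mu_j=\int_0^\infty\phi(r)r^{N-1+2j}\,dr$ be the first nonvanishing moment. Then $t^{N/2+j}\tilde\eta(t,x)\to c_{N,j}(-1)^j\mu_j$ with $c_{N,j}>0$, uniformly for $|x|\le R_0$. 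This holds because, for radial $\phi$, the expansion of $\int e^{-|x-y|^2/4t}\phi(y)\,dy$ involves $|x|$ only through lower moments, which vanish. Since $\phi$ need not have compact support, use a Taylor-remainder bound rather than the full series.
\end{enumerate}
A positive limit gives $\tilde\eta>0$ on $\overline{B_{R_0}}$. Then, for large $n$, $u(t_n+t,\cdot)-\theta$ has only the outer crossing, which contradicts the constancy of $k\ge2$ (or invokes Lemma~\ref{lem:to V not to theta}). A negative limit contradicts $\tilde\eta(t,R_0)\ge0$, which follows from outer monotonicity; your Dirichlet bound $\tilde\eta(t,0)>-Me^{-\delta t}$ also works. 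With this, the innermost sign plays no role and the $U_-$ detour can be dropped.

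\textbf{A missing justification.} You must show that some $\mu_j$ with $j\le k-1$ is nonzero. In the variable $s=r^2$, if $\mu_0,\dots,\mu_{J-1}$ all vanish, then $\phi$ is orthogonal to every polynomial of degree $<J$ (with respect to the weight $s^{(N-2)/2}$). Pairing with $\prod_i(s-s_i)$ over the sign-change points then forces at least $J$ sign changes, while $\phi$ has at most $k-1$. A moment equal to $+\infty$ is harmless, since $\phi\ge0$ on $[R_0,\infty)$ and such a moment only fixes the sign of the limit.
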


\begin{proof}
The cases in which $u$ and $\theta$ have one or two intersection points for all sufficiently large $t$ have already been treated in Lemmas \ref{lem:to V not to theta} and \ref{lem:trans sol not to theta}. By the zero-number argument, we know that the number of intersection points between $u$ and $\theta$ must remain finite for $t>0$. For the remaining cases with more intersection points, this theorem follows from a slight modification of the proof of Lemma \ref{lem:trans sol not to theta}.
\end{proof}
\begin{proposition}\label{prop:uniform}
If $u(\cdot,t)\to0$ locally, the convergence is uniform. If $N\ge3$ and $u(\cdot,t)\to U\in\cS$ locally, the convergence is uniform. Every transition free boundary diverges.
\end{proposition}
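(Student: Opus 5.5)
We treat the three assertions of Proposition~\ref{prop:uniform} separately. All three rest on the outer monotonicity \eqref{eq:outermono}, which says $u_r<0$ for $R_0<r<b(t)$. Combined with local convergence, this controls the solution on the whole exterior region.

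\emph{Vanishing.} Suppose $u(\cdot,t)\to0$ locally uniformly, and fix $R_1>R_0$. Choose $T$ so that $\sup_{[0,R_1]}u(\cdot,t)<\theta/2$ for $t\ge T$. Outer monotonicity then gives $\sup_{\R^N}u(\cdot,T)\le u(R_1,T)<\theta/2$. The constant $\theta$ is a stationary supersolution, so $u\le\theta$ for all $t\ge T$, hence $f(u)\equiv0$ and $u$ solves the pure PME with finite mass. The smoothing estimate $\|u(t)\|_\infty\le C\|u(T)\|_{L^1}^{2\beta/N}(t-T)^{-\beta}$, with $\beta=N/(N(m-1)+2)$, then gives uniform convergence to $0$. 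This repeats the first paragraph of the proof of Lemma~\ref{lem:excludeplateau}.

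\emph{Convergence to a ground state for $N\ge3$.} Let $U\in\cS$ and $\varepsilon>0$. Since $U(r)=\theta(R_U/r)^{N-2}$ beyond its crossing radius, we can pick $R_1>R_0$ with $U(R_1)<\varepsilon/2$. Local uniform convergence on $[0,R_1]$ gives $|u-U|<\varepsilon/2$ there for large $t$. For $r\ge R_1$, monotonicity yields $0\le u(r,t)\le u(R_1,t)<\varepsilon$, and also $0<U(r)<\varepsilon/2$. Hence $\|u(t)-U\|_\infty<\varepsilon$ for large $t$, which is uniform convergence.

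\emph{Divergence of the free boundary.} Let $u$ be a transition solution. By Theorem~\ref{thm:conv}, together with Theorem~\ref{thm:general trans sol not to theta} when $N\ge3$, the limit is $\theta$ for $N=2$ and some $U\in\cS$ for $N\ge3$. Both limits are positive on every ball. Fix $\rho>0$. Local uniform convergence gives $u(\cdot,t)>0$ on $\overline{B_\rho}$ for large $t$, so $b(t)>\rho$ for large $t$. Hence $\liminf_{t\to\infty} b(t)\ge\rho$ for every $\rho$, i.e.\ $b(t)\to\infty$. It would also be natural to show that $b$ is eventually nondecreasing, though divergence does not require it. For this we would use Darcy's law \eqref{Darcy}: $v_r(b(t)-,t)\le0$ by outer monotonicity, so $b'\ge0$ once $b(t)>R_0$.

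\emph{Main obstacle.} The delicate step is the vanishing case, specifically justifying that the pure-PME smoothing estimate applies once $u\le\theta$. In the radial setting this is standard, since $f(u)=0$ exactly for $u\le\theta$. The ground-state case is routine once the explicit harmonic tail of $U$ is used. The divergence claim is immediate, but it depends on the already established exclusion of $0$ and $1$ as limits of transition solutions and on the positivity of the possible limits.
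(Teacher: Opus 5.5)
Your proposal is correct and follows essentially the same route as the paper: outer monotonicity \eqref{eq:outermono} plus local convergence to push the whole solution below $\theta$ and then invoke pure-PME smoothing; the bound $\sup_{r\ge R}|u-U|\le u(R,t)+U(R)$ for the ground-state case; and positivity of the transition limit on every fixed ball for the divergence of $b(t)$. There are two harmless slips. First, beyond $R_U$ it is $U^m$, not $U$, that equals $\theta^m(R_U/r)^{N-2}$. Second, outer monotonicity bounds only $\sup_{r\ge R_1}u(r,T)$ by $u(R_1,T)$, not the supremum over all of $\R^N$. Neither affects the argument: you only need $U(r)\to0$, and $\sup_{[0,R_1]}u<\theta/2$ already covers the inner ball.
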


\begin{proof}
For the zero limit, local convergence on the initial-support ball and \eqref{eq:outermono} place the whole solution strictly below $\theta$ at a large time. Comparison with the stationary state $\theta$ then keeps the reaction zero, and pure-PME smoothing gives uniform decay.

For a ground-state limit, choose $R$ beyond the initial support. Outer monotonicity gives
\[
 \sup_{r\ge R}|u(r,t)-U(r)|\le u(R,t)+U(R).
\]
Taking $t\to\infty$ and then $R\to\infty$ proves uniform convergence. Finally, a transition limit is positive on every fixed ball, whereas the solution vanishes outside its finite free boundary at each finite time; hence the boundary tends to infinity.
\end{proof}

\section{Uniqueness of the threshold }
In higher dimensions, we have the following result.
\begin{proposition}\label{prop:uniqueN}
If $N\ge3$, then $\sigma_-=\sigma_+$.
\end{proposition}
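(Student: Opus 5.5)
We argue by contradiction: assume $\sigma_-<\sigma_+$ and pick two parameters $\sigma_-\le\sigma_1<\sigma_2\le\sigma_+$, both transition parameters by Lemma \ref{lem:open}. By Theorem \ref{thm:general trans sol not to theta} and Proposition \ref{prop:uniform}, there are ground states $U_1,U_2\in\cS$ with $u_{\sigma_i}(\cdot,t)\to U_i$ uniformly in $\R^N$. Since $\sigma_1\phi<\sigma_2\phi$ on the support, comparison gives $u_{\sigma_1}\le u_{\sigma_2}$, hence $U_1\le U_2$. Proposition \ref{prop:stationary} then forces $U_1=U_2=:U$. The goal is to show that two strictly ordered solutions cannot converge to the same ground state.

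The plan is to exploit a scaling/shift of the smaller solution. Because $u_{\sigma_1}<u_{\sigma_2}$ strictly in the positive phase of $u_{\sigma_1}$ at some fixed time $t_0>0$, with $b_{\sigma_1}(t_0)<b_{\sigma_2}(t_0)$ and transversal free boundaries (Darcy's law), we can fit a perturbed copy between them. The choice I would try first is the time shift: for small $\tau>0$, $u_{\sigma_1}(\cdot,t_0)\le u_{\sigma_2}(\cdot,t_0+\tau)$ by continuity of $u_{\sigma_2}$ in $t$ and the strict gap. A cleaner choice is a spatial dilation $w(x,t)=u_{\sigma_1}(\lambda x,\lambda^2 t)$ with $\lambda<1$ close to $1$, which is a subsolution because $f\ge0$ on $[0,\theta]$ and $f$ is Lipschitz, combined with a small amplitude adjustment. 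Either way we obtain a solution or subsolution $\underline u$ lying below $u_{\sigma_2}$ whose limit is a stationary state strictly different from $U$.

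The most robust route uses the zero number (Theorem \ref{thm:radialzero}) against $U$ itself. Since $u_{\sigma_1}<u_{\sigma_2}$ and both tend to $U$, we consider $\mathcal Z_0(t)$ for $u_{\sigma_i}-U$. The tail of $U$ behaves like $A r^{2-N}$, while the compact support of $u_{\sigma_i}$ makes $u_{\sigma_i}<U$ near $b_{\sigma_i}(t)$. After the last jump time, both intersection counts are constant and the intersection points converge. Near $r=0$, the linearized problem about $U$ (nondegenerate in any fixed ball, since $U>0$) gives the following: the normalized difference $(u_{\sigma_2}-u_{\sigma_1})/\|u_{\sigma_2}-u_{\sigma_1}\|_{L^\infty(B_R)}$ converges, as in the proof of Lemma \ref{lem:trans sol not to theta}, to a positive solution of $\eta_t=\Delta(mU^{m-1}\eta)+f'(U)\eta$ on $B_R$. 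Using the energy monotonicity (Proposition \ref{prop:energy}) with $U$ as a mountain-pass-type critical point, I would show that positive perturbations push toward $1$. Concretely, I would compare $u_{\sigma_2}$ at a large time with the solution from datum $\max\{U,\ u_{\sigma_1}+\epsilon\psi\}$ restricted to a large ball, where $\psi$ is the principal Dirichlet eigenfunction of the linearization on $B_R$. For $R$ large the principal eigenvalue is positive, because $U$ is an unstable radial ground state: differentiate in $q$ at the matching point, or use $\partial_r U$, which changes sign nowhere and so is not a principal eigenfunction. That sign then makes $U+\epsilon\psi$ a strict subsolution on $B_R$ which, by the monotone iteration, drives the solution to $1$, so $\sigma_2\in\Sigma_1$, a contradiction.

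The main obstacle is establishing the instability of $U$ on a bounded ball, namely the positive principal eigenvalue of $\Delta(mU^{m-1}\cdot)+f'(U)$ in $B_R$ for large $R$, when $f'$ may be discontinuous at $\theta$. I would try to obtain it from the Rayleigh quotient, using the test function $\partial_r W$ extended by zero outside a node, or the variation $\partial_q W(\cdot;q)$, which has a zero by the non-comparability in Proposition \ref{prop:stationary}. A second delicate point is ensuring that $u_{\sigma_2}(\cdot,t)\ge U+\epsilon\psi$ on $B_R$ at some time. This needs $u_{\sigma_2}>U$ on $B_R$ eventually, which I would get from the zero-number limit of $u_{\sigma_2}-U$ together with $u_{\sigma_2}>u_{\sigma_1}$ and the Hopf lemma applied to the normalized difference.
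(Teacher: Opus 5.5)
Your reduction to $U_1=U_2=:U$ via comparison and Proposition~\ref{prop:stationary} is correct. The route you then commit to, however, leaves two essential steps unproved.

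First, the positive principal eigenvalue on $B_R$ is only asserted. Your remark about $\partial_rU$ does not help: a one-signed function is the natural candidate for a principal eigenfunction, not evidence against it. The classical argument uses the sign-changing kernel element $\partial_{x_1}U$ on all of $\R^N$. Transferring that to a Dirichlet problem on a ball, when $\Delta(mU^{m-1}\cdot)$ degenerates at infinity and $f'(U)$ may jump at $r=R_U$, is exactly the spectral input the paper avoids, and you do not supply it.

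Second, even granting instability, you need $u_{\sigma_2}(\cdot,t)\ge U+\epsilon\psi$ on $B_R$ at some time, i.e.\ $u_{\sigma_2}>U$ on a large ball. Nothing you cite yields this. The strict order $u_{\sigma_2}>u_{\sigma_1}$, and the positivity of the normalized difference $(u_{\sigma_2}-u_{\sigma_1})/\|u_{\sigma_2}-u_{\sigma_1}\|$, carry no information about the sign of $u_{\sigma_2}-U$. A priori both solutions may stay below $U$ near the origin for all time. That case would need a mirror argument with $U-\epsilon\psi$, which you do not give.

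The missing idea is elementary, and it is what the paper uses: the whole-space problem is translation invariant. For small $\gamma>0$ one has $u_{\sigma_1}(\cdot,\gamma)<u_{\sigma_2}(\cdot,\gamma)$ on $\overline{B_{b_1(\gamma)}}$ and $b_1(\gamma)<b_2(\gamma)$. This is where $(\phi^{m-1})'(r_0)<0$ and Darcy's law enter. Hence a slightly translated copy $u_{\sigma_1}(\cdot+\epsilon e_1,\gamma)$ still lies below $u_{\sigma_2}(\cdot,\gamma)$. Comparison propagates the order, and letting $t\to\infty$ gives $U(\cdot+\epsilon e_1)\le U$. This fails at $x=-\epsilon e_1$ because $U_r<0$.

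Your dilation idea could be completed the same way. The function $w(x,t)=u_{\sigma_1}(\lambda x,\lambda^2t)$ solves $w_t=\Delta w^m+\lambda^2f(w)$, so for $\lambda<1$ it is a subsolution wherever $w\le1$. Its limit $U(\lambda\cdot)$ is not a stationary state, as you claimed, but it satisfies $U(\lambda x)>U(x)$ for $x\ne0$. So the resulting inequality $U(\lambda\cdot)\le U$ is itself the contradiction, and no spectral information is needed. The time shift, by contrast, gives nothing, because the shifted solution has the same limit $U$.
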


\begin{proof}
Suppose $\sigma_-<\sigma_+$.  Choose
\[
 \sigma_-\le\sigma_1<\sigma_2\le\sigma_+.
\]
The solutions $u_1$ and $u_2$, with initial data $\sigma_1\phi$ and $\sigma_2\phi$, respectively, are both transition solutions. Since $\phi\in\mathscr{X}$, we have $(\phi^{m-1})'(b(0))<0$. It is straightforward to show that, for sufficiently small $\gamma>0$,
$$u_1(\gamma,r)<u_2(\gamma,r)\mbox{ for }0<r<b_1(\gamma) \mbox{ and } b_1(\gamma)<b_2(\gamma).$$ It follows that there exists a small $\epsilon>0$ such that $u_1(\gamma,\cdot+\epsilon \vec{e}_1)\leq u_2(\gamma,\cdot)$ holds in $\mathbb{R}^N$. Since
\[
 u_{\sigma_i}(t)\to U_i\in\cS
\]
comparison gives $U_1(\cdot+\epsilon \vec{e}_1)\le U_2(\cdot)$.  Since $\epsilon>0$, we have $U_1<U_2$. However, Proposition \ref{prop:stationary} then implies $U_1=U_2$, a contradiction. Therefore the transition interval is a singleton for $N\geq 3$.
\end{proof}
Theorem \ref{thm:Nge3} follows from Theorem \ref{thm:conv}, Lemma \ref{lem:open}, Theorem \ref{thm:general trans sol not to theta}, and Proposition \ref{prop:uniqueN}.

If $N=2$, choose $\gamma_0\in(0,1-\theta)$ such that
\begin{equation}\label{eq:monotonef}
 f\ \hbox{is nondecreasing on }[\theta,\theta+\gamma_0].
\end{equation}

\begin{lemma}\label{lem:globaltheta}
If $u(\cdot,t)\to\theta$ locally uniformly, then
\[
 \limsup_{t\to\infty}\|u(\cdot,t)\|_\infty\le\theta.
\]
\end{lemma}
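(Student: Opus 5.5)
The idea is to combine local uniform convergence on a fixed ball with the outer radial monotonicity \eqref{eq:outermono}. Fix $R_0$ with the initial support contained in $B_{R_0}$. By Proposition \ref{prop:basic}, for every $t>0$ the profile $u(\cdot,t)$ is strictly decreasing in $r$ on $(R_0,b(t))$ and vanishes beyond $b(t)$. Therefore
\[
\sup_{r\ge R_0}u(r,t)\le u(R_0,t),
\]
and hence
\[
\|u(\cdot,t)\|_\infty=\max\Bigl\{\max_{0\le r\le R_0}u(r,t),\ \sup_{r\ge R_0}u(r,t)\Bigr\}\le\max_{0\le r\le R_0}u(r,t).
\]

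The assumption $u(\cdot,t)\to\theta$ locally uniformly, applied on the compact ball $\overline{B_{R_0}}$, gives $\max_{|x|\le R_0}u(x,t)\le\theta+o(1)$ as $t\to\infty$. Combining this with the inequality above yields $\|u(\cdot,t)\|_\infty\le\theta+o(1)$, so $\limsup_{t\to\infty}\|u(\cdot,t)\|_\infty\le\theta$. Nothing here depends on $N=2$ or on \eqref{eq:monotonef}; those hypotheses are presumably needed only for the later uniqueness argument.

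The one point to check is the radial reflection statement \eqref{eq:outermono}: it must hold for all $t>0$ with $R_0$ fixed, which is exactly how Proposition \ref{prop:basic} is stated. It must also cover the boundary case $r=R_0$, which follows by continuity. No further obstacle is expected. If one wants the sharper fact that the supremum is attained inside $B_{R_0}$, the same monotonicity gives it directly.
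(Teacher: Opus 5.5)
Your proposal is correct and matches the paper's proof: \eqref{eq:outermono} reduces the global supremum to the supremum over the fixed ball $B_{R_0}$, and local uniform convergence to $\theta$ on that ball gives the bound. Your remark that neither $N=2$ nor \eqref{eq:monotonef} is used here is also accurate.
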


\begin{proof}
If the initial support lies in $B_{R_0}$, \eqref{eq:outermono} gives
\[
 \sup_{r\ge0}u(r,t)=\sup_{0\le r\le R_0}u(r,t).
\]
Local convergence on this fixed ball proves the assertion.
\end{proof}

\begin{lemma}\label{lem:scaledDom}
Let $u_1<u_2$ be defined as in Proposition \ref{prop:uniqueN} and small $\gamma>0$. There are $A>1$, arbitrarily close to one, and $\tau>0$ such that
\begin{equation}\label{eq:scaledinitial}
 u_2(x,\gamma+\tau)\ge A u_1(A^{-m/2}x,\gamma)
 \qquad(x\in\R^N).
\end{equation}
\end{lemma}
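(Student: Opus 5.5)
The inequality \eqref{eq:scaledinitial} is purely static: it compares two fixed profiles. The plan is to exploit two strict orderings that hold at time $\gamma$ for small $\gamma>0$, as in the proof of Proposition \ref{prop:uniqueN}:
\[
u_1(\cdot,\gamma)<u_2(\cdot,\gamma)\ \text{on } \overline{B_{b_1(\gamma)}}\setminus\partial B_{b_1(\gamma)},
\qquad b_1(\gamma)<b_2(\gamma).
\]
Both are stable under small perturbations. Changing $A$ near $1$ is one such perturbation, and moving from $\gamma$ to $\gamma+\tau$ with $\tau$ small is another. Write $b_i=b_i(\gamma)$ and $w_A(x):=Au_1(A^{-m/2}x,\gamma)$. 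Then $w_A$ is supported in $\overline{B_{A^{m/2}b_1}}$, and $w_A\to u_1(\cdot,\gamma)$ uniformly on $\R^N$ as $A\downarrow1$, because $u_1(\cdot,\gamma)$ is continuous and compactly supported.

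\textbf{Step 1 (choice of $\tau$).} Set $d:=b_2-b_1>0$. By Proposition \ref{prop:basic}, $b_2$ is continuous on $(0,\infty)$. By the outer monotonicity \eqref{eq:outermono}, together with positivity of $u_2$ inside its support, we also have
\[
m_0:=\min_{|x|\le b_1+d/2}u_2(x,\gamma)>0 .
\]
Continuous dependence in time, which is uniform on compact subsets of the positive phase where $u_2$ is smooth, then gives $\tau>0$ with two properties:
\[
b_2(\gamma+\tau)>b_1+d/2,
\qquad
\|u_2(\cdot,\gamma+\tau)-u_2(\cdot,\gamma)\|_{L^\infty(B_{b_1+d/2})}<\varepsilon .
\]
Here $\varepsilon$ is a small number fixed in Step 2. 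In particular $u_2(\cdot,\gamma+\tau)\ge m_0/2$ on $B_{b_1+d/2}$.

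\textbf{Step 2 (splitting the ball).} Fix $\delta\in(0,b_1)$ so small that $u_1(r,\gamma)<m_0/8$ for $r\in[b_1-\delta,b_1]$; this uses continuity and $u_1(b_1,\gamma)=0$. On the compact ball $\overline{B_{b_1-\delta}}$ we have $u_1<u_2$ strictly at time $\gamma$, so there is a margin
\[
\kappa:=\min_{\overline{B_{b_1-\delta}}}\big(u_2(\cdot,\gamma)-u_1(\cdot,\gamma)\big)>0 .
\]
Take $\varepsilon<\kappa/3$ in Step 1. Then choose $A>1$, as close to $1$ as desired, so that
\[
A^{m/2}b_1<b_1+d/2,
\qquad
\|w_A-u_1(\cdot,\gamma)\|_\infty<\min\{\kappa/3,\ m_0/8\}.
\]
The inequality now follows region by region.
\begin{itemize}
\item On $\overline{B_{b_1-\delta}}$: $w_A<u_1+\kappa/3\le u_2(\cdot,\gamma)-2\kappa/3<u_2(\cdot,\gamma+\tau)$.
\item On $b_1-\delta\le|x|\le A^{m/2}b_1$: here $w_A<m_0/8+m_0/8<m_0/2\le u_2(\cdot,\gamma+\tau)$. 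The bound $w_A<m_0/8+m_0/8$ holds because $u_1(\cdot,\gamma)<m_0/8$ on $[b_1-\delta,\infty)$, since $u_1$ vanishes beyond $b_1$.
\item On $|x|>A^{m/2}b_1$: $w_A=0\le u_2$.
\end{itemize}
Together these give \eqref{eq:scaledinitial}.

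\textbf{Main obstacle.} The delicate point is the neighbourhood of the free boundary $r=b_1$. There the ratio $u_2/u_1$ is not controlled, and the dilation $A^{-m/2}x$ pushes the support of $w_A$ outward. The separation $b_1<b_2$ removes this difficulty, because $u_2$ is bounded below by $m_0>0$ on a neighbourhood of $\partial B_{b_1}$ while $w_A$ is uniformly small there. This separation is guaranteed by the nondegeneracy $(\phi^{m-1})'(r_0)<0$ for data in $\mathscr X$, which is used in Proposition \ref{prop:uniqueN} to obtain $b_1(\gamma)<b_2(\gamma)$. The only remaining point is that $\tau$ must be small enough that $b_2(\gamma+\tau)$ and $u_2$ on $B_{b_1+d/2}$ do not degrade. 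This follows from continuity of $b_2$ and of $u_2$ in the positive phase, which Proposition \ref{prop:basic} provides.
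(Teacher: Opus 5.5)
Your argument is correct and is essentially the paper's. The paper argues the same way, phrased with the pressures $A^{m-1}v_1(A^{-m/2}x,\gamma)$, which is equivalent. The gap $b_1(\gamma)<b_2(\gamma)$ keeps the dilated support $\overline{B_{A^{m/2}b_1(\gamma)}}$ inside a compact subset of $B_{b_2(\gamma)}$, where $u_2(\cdot,\gamma)-u_1(\cdot,\gamma)$ has a positive lower bound. Uniform convergence of $Au_1(A^{-m/2}\cdot,\gamma)$ as $A\downarrow1$ then gives the static inequality, and continuity of $u_2$ in time carries it to $\gamma+\tau$.

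There is one bookkeeping slip. The middle region $b_1-\delta\le|x|\le A^{m/2}b_1$ needs $\varepsilon\le m_0/2$, and this does not follow from $\varepsilon<\kappa/3$, so you should choose $\varepsilon<\min\{\kappa/3,m_0/2\}$. Your three-region split is also unnecessary, since $u_2-u_1>0$ already holds on all of $\overline{B_{b_1+d/2}}$ at time $\gamma$.
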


\begin{proof}
It is straightforward to verify that
$$u_1(\gamma,r)<u_2(\gamma,r)\mbox{ for }0<r<b_1(\gamma) \mbox{ and } b_1(\gamma)<b_2(\gamma)$$
for some small $\gamma>0$. Let $v_i$ be the pressures and
\[
 Q_A(x)=A^{m-1}v_1(A^{-m/2}x,\gamma).
\]
Its support radius is $A^{m/2}b_1(\gamma)$. For $b_1(\gamma)<b_2(\gamma)$, choose $A>1$ close enough that this support is compactly contained in $B_{b_2(\gamma)}$. Since $Q_A\to v_1(\cdot,\gamma)$ uniformly on bounded sets and $v_2(\cdot,\gamma)>v_1(\cdot,\gamma)$ on $B_{b_1(\gamma)}$, compactness gives $v_2(\cdot,\gamma)>Q_A$ on the full scaled support. The inequality persists at $\gamma+\tau$ for small $\tau$.
\end{proof}

\begin{lemma}[Sharpness at the constant transition state]\label{lem:thetaSharp}
If $u_1<u_2$ and $u_1(\cdot,t)\to\theta$ locally, then $u_2$ cannot also converge to $\theta$.
\end{lemma}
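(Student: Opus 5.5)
We argue by contradiction: assume $u_1<u_2$ are both transition solutions in $N=2$ with $u_1(\cdot,t)\to\theta$ and $u_2(\cdot,t)\to\theta$ locally uniformly. The tool is the scaling invariance of \eqref{eq:PDE} on the sub-ignition range. For $A>1$ set
\[
 w(x,t):=A\,u_1(A^{-m/2}x,\,t).
\]
Where $w\le\theta$ we have $f(w)=0$, and $w$ solves the pure PME $w_t=\Delta w^m$, since $\Delta_x(A^m u_1^m(A^{-m/2}x))=A\,(\Delta u_1^m)(A^{-m/2}x)$. Where $w>\theta$ we have $u_1>\theta/A$, and the equation for $w$ carries the reaction term $A f(u_1)$, to be compared with $f(w)=f(Au_1)$. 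Assumption (F2), in the form \eqref{eq:monotonef}, will be used to order these.

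\textbf{Step 1 (initial ordering).} By Lemma \ref{lem:scaledDom}, choose $A>1$ close to $1$ and $\tau>0$ with
\[
 u_2(x,\gamma+\tau)\ge w(x,\gamma)=A\,u_1(A^{-m/2}x,\gamma).
\]

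\textbf{Step 2 ($w$ is a subsolution at late times).} By Lemma \ref{lem:globaltheta}, $\|u_1(\cdot,t)\|_\infty\le\theta+\varepsilon$ for $t\ge T_0$. Choose $A$ with $A(\theta+\varepsilon)\le\theta+\gamma_0$. Where $u_1\le\theta$ we have $Af(u_1)=0\le f(w)$, because $w\le A\theta<1$ lies in a range where $f\ge0$. Where $\theta<u_1\le\theta+\varepsilon$, monotonicity of $f$ on $[\theta,\theta+\gamma_0]$ gives $f(w)=f(Au_1)\ge f(u_1)$. We still need $f(Au_1)\ge Af(u_1)$, which is not automatic. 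We plan to secure it by first choosing $\varepsilon$ small and then $A$ so that $f(Au_1)\ge Af(u_1)$ on the relevant range. If that fails, we would instead work with the pressure and a time dilation $w=A u_1(A^{-m/2}x, A^{-1}t)$-type rescaling, for which the reaction enters with factor $A^{-1}\cdot A$ and monotonicity alone suffices. Either way, $w$ is a subsolution on $t\ge T_0$. Step 1 is then applied with $\gamma$ replaced by a time $\ge T_0$. This requires redoing Lemma \ref{lem:scaledDom} at a later time, which works because $u_1<u_2$ with $b_1<b_2$ persists by comparison and by strict order of the free boundaries.

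\textbf{Step 3 (contradiction).} Comparison for the degenerate equation gives
\[
 u_2(x,t+\tau)\ge A\,u_1(A^{-m/2}x,t)\quad\text{for } t\ge T_0.
\]
Letting $t\to\infty$ locally uniformly yields $\theta\ge A\theta>\theta$, a contradiction. Hence $u_2\not\to\theta$.

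\textbf{Main obstacle.} Step 2 is the delicate point. The scaling produces the reaction factor $A$ in front of $f(u_1)$, while (F2) only gives monotonicity. The fallback is to scale time as well, so that the factor becomes $1$ (or less), and to check that the time-rescaled comparison still delivers $A\theta$ in the limit. This holds because local convergence to $\theta$ is insensitive to reparametrizing time.
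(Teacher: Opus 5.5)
Your fallback is the argument that works, and it is the paper's proof. With $w(x,s)=A\,u_1(A^{-m/2}x,T+A^{-1}s)$ one has $w_s=\Delta w^m+f(w/A)$, with no prefactor on the reaction. Hence $f(w/A)\le f(w)$ follows from monotonicity of $f$ on $[\theta,\theta+\gamma_0]$ once $A(\theta+\varepsilon)\le\theta+\gamma_0$. The ordering of Lemma~\ref{lem:scaledDom} is used at a large time $T$; you rightly note the lemma must be redone there, which the paper does implicitly. Letting $s\to\infty$ then gives $\theta\ge A\theta$, and the time reparametrization is harmless, as you say.

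The main line of your Step 2, however, is wrong. It should be dropped, not presented as an equivalent alternative.

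\emph{The scaling identity fails.} For $w(x,t)=A\,u_1(A^{-m/2}x,t)$, the factor $A^m$ in $w^m$ is cancelled by the factor $A^{-m}$ from two spatial derivatives. So $\Delta_x w^m=(\Delta u_1^m)(A^{-m/2}x,t)$, not $A(\Delta u_1^m)(A^{-m/2}x,t)$. Thus $w_t=A\,\Delta w^m+Af(w/A)$. Even on the sub-ignition set, $w_t-\Delta w^m=(A-1)\Delta w^m$ has no sign, so this $w$ is not a subsolution. The time-independent rescaling that does preserve the PME is $A\,u_1(A^{-(m-1)/2}x,t)$, and it is this one that produces the reaction $Af(w/A)$ you describe.

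\emph{The corrected rescaling still needs an inequality (F2) does not give.} You would need $f(Au)\ge Af(u)$ on $(\theta,\theta+\varepsilon]$. Monotonicity permits $f$ to be constant and positive on small subintervals arbitrarily close to $\theta$. There $f(Au)=f(u)<Af(u)$ for $A$ near one. You also have no control on how far from one the admissible $A$ in Lemma~\ref{lem:scaledDom} may be taken. So ``choose $\varepsilon$, then $A$'' cannot be carried out in general.

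The time dilation by $A^{-1}$ is therefore the essential idea, not an optional repair, because it is what removes the factor $A$ in front of the reaction. Write the proof with the dilated $w$ from the start.
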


\begin{proof}
Assume both converge to $\theta$. Their free boundaries diverge. Choose a large regular positive-speed time $T$ for $u_2$ such that, by Lemma \ref{lem:globaltheta},
\[
 0\le u_1(x,t)\le\theta+\delta_0/4\qquad(t\ge T).
\]
Apply Lemma \ref{lem:scaledDom} with $A>1$ so close to one that $A(\theta+\delta_0/4)<\theta+\delta_0$. Define
\[
 w(x,s)=A u_1(A^{-m/2}x,T+A^{-1}s).
\]
A direct calculation gives
\[
 w_s=\Delta w^m+f(w/A).
\]
If $w\le\theta$, both reaction terms vanish. If $w>\theta$, then either $w/A\le\theta$, or both $w/A$ and $w$ lie in the monotonicity interval \eqref{eq:monotonef}. Hence $f(w/A)\le f(w)$ and $w$ is a subsolution. Starting from \eqref{eq:scaledinitial}, comparison gives
\[
 u_2(x,T+\tau+s)\ge A u_1(A^{-m/2}x,T+A^{-1}s).
\]
Letting $s\to\infty$ at fixed $x$ yields $\theta\ge A\theta$, a contradiction.
\end{proof}

\begin{proposition}\label{prop:unique2}
In dimension two, $\sigma_-=\sigma_+$, and the unique finite-threshold solution converges to $\theta$.
\end{proposition}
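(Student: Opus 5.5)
Suppose $\sigma_-<\sigma_+$. The plan is to produce two ordered transition solutions that both converge to $\theta$ and then invoke Lemma~\ref{lem:thetaSharp}. The last claim, that the finite threshold converges to $\theta$, will then follow from the classification of limits.

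First choose $\sigma_-\le\sigma_1<\sigma_2\le\sigma_+$. By Lemma~\ref{lem:open}, both $u_1=u(\cdot;\sigma_1\phi)$ and $u_2=u(\cdot;\sigma_2\phi)$ are transition solutions: they neither vanish nor spread. By Theorem~\ref{thm:conv}(i), each converges locally uniformly to one of $0$, $\theta$, $1$. The value $0$ is impossible, since by Proposition~\ref{prop:uniform} local convergence to $0$ upgrades to uniform convergence, i.e.\ vanishing. The value $1$ is excluded by the definition of $\Sigma_1$. Hence both $u_1\to\theta$ and $u_2\to\theta$ locally uniformly.

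The solutions are strictly ordered. Since $\sigma_1\phi\le\sigma_2\phi$ with strict inequality on the support of $\phi$, comparison and the argument at the start of Proposition~\ref{prop:uniqueN} give, for small $\gamma>0$, $u_1(\cdot,\gamma)<u_2(\cdot,\gamma)$ on $B_{b_1(\gamma)}$ and $b_1(\gamma)<b_2(\gamma)$. This is exactly the input needed for Lemma~\ref{lem:scaledDom}, and hence for Lemma~\ref{lem:thetaSharp}. That lemma states that $u_2$ cannot also converge to $\theta$, which is a contradiction. Therefore $\sigma_-=\sigma_+$.

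For the second assertion, let $\sigma_*:=\sigma_-=\sigma_+<\infty$. Then $\sigma_*\notin\Sigma_0\cup\Sigma_1$, since both sets are open intervals that exclude their endpoints. So $u_{\sigma_*}$ is a transition solution, and the same trichotomy argument shows that its limit is $\theta$.

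The main point to check is the applicability of Lemma~\ref{lem:thetaSharp}. Its proof uses a scaled subsolution $w=Au_1(A^{-m/2}x,T+A^{-1}s)$ started at a large time $T$, whereas Lemma~\ref{lem:scaledDom} provides the scaled domination only near $t=\gamma$.

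If the time reference needs adjusting, I would first try the following. Take $T=\gamma$ in the construction, so that the subsolution is started from \eqref{eq:scaledinitial}. The required bound $u_1\le\theta+\delta_0/4$ need only hold along the subsolution's evolution, so I would handle it as follows:
\begin{itemize}
\item[(a)] apply Lemma~\ref{lem:globaltheta} to get the bound for large times;
\item[(b)] alternatively, rerun Lemma~\ref{lem:scaledDom} at a large time $T$, using that $u_1<u_2$ with $b_1(T)<b_2(T)$ persists for all $t$ by strict comparison and Darcy's law.
\end{itemize}

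Option (b) is the cleaner route: the strict ordering at time $T$, together with separation of the free boundaries, is all that Lemma~\ref{lem:scaledDom}'s compactness argument uses.
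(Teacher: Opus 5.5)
Your argument is correct and is essentially the paper's proof. Theorem~\ref{thm:conv} forces every transition solution to converge to $\theta$, once Proposition~\ref{prop:uniform} and the definition of $\Sigma_1$ rule out $0$ and $1$; two distinct ordered transition parameters then contradict Lemma~\ref{lem:thetaSharp}, and $\sigma_*$ is itself a transition parameter by Lemma~\ref{lem:open}.

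Your remark about the time reference is apt: the paper's proof of Lemma~\ref{lem:thetaSharp} in effect takes your option (b), applying Lemma~\ref{lem:scaledDom} at the large time $T$. Option (a) would not work, because $w$ must be a subsolution from its starting time onward. Note, however, that Darcy's law alone does not give persistence of $b_1(T)<b_2(T)$, since at a first contact it only yields $b_1'=b_2'$.
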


\begin{proof}
Every transition solution converges to $\theta$ by Theorem \ref{thm:conv}. Two distinct transition parameters would give two strictly ordered solutions with the same limit, contradicting Lemma \ref{lem:thetaSharp}.
\end{proof}

Theorem \ref{thm:N2} follows from Theorem \ref{thm:conv}, Lemma \ref{lem:open}, and Proposition \ref{prop:unique2}.

\section{Propagation speed of transition solutions in dimension $N=2$}

Let $u$ be the two-dimensional transition solution and write
\[
 \{u(\cdot,t)>0\}=B_{b(t)},\qquad
 r_\theta(t):=\sup\{r\in(0,b(t)):u(r,t)\ge\theta\}.
\]
By Theorem~\ref{thm:conv},
\[
 u(\cdot,t)\longrightarrow\theta
 \qquad\hbox{locally uniformly in }\R^2,
\]
and consequently $b(t)\to\infty$.  We assume that there are
$\lambda_\theta>0$ and $p\ge1$ such that
\begin{equation}\label{eq:Ftheta-power}
 f(\theta+s)=\lambda_\theta s^p[1+o(1)]
 \qquad(s\downarrow0).
\end{equation}
This hypothesis contains the following two cases:
\begin{enumerate}[label=(\roman*)]
\item if $f'_+(\theta)>0$, then \eqref{eq:Ftheta-power} holds with
      $p=1$ and $\lambda_\theta=f'_+(\theta)$;
\item if $f'_+(\theta)=0$, we impose \eqref{eq:Ftheta-power} for some
      $p>1$.
\end{enumerate}
Put
\[
 \alpha_m:=\frac{m-1}{m}.
\]

The argument combines the stationary-cap and zero-number method of
Du--Lou--Zhou \cite[Section~4.3]{DuLouZhou2015}, the degenerate
intersection-number framework of Lou--Zhou
\cite[Sections~3 and~5]{LouZhou2024}, and the two-dimensional
exterior-domain asymptotics of Quir\'os--V\'azquez
\cite[Theorems~8.1 and~8.5]{QuirosVazquez1999}.

\subsection{The exterior porous-medium input}

\begin{lemma}\label{lem:QVscale}
Let $A,R>0$, and let $z=z(r,s)$ be a nonnegative radial solution of
\begin{equation}\label{eq:exteriorPME}
 \begin{cases}
 z_s=(z^m)_{rr}+\dfrac1r(z^m)_r,
       &r>R,\ s>0,\\[1mm]
 z(R,s)=A,
       &s>0,
 \end{cases}
\end{equation}
with bounded, nontrivial, compactly supported initial datum compatible
with the boundary value.  If $\beta(s)$ is its outer free boundary, then
there are $0<c<C<\infty$ and $s_0>0$ such that
\begin{equation}\label{eq:QVtwosided}
 c\frac{\sqrt s}{(\log s)^{\alpha_m/2}}
 \le \beta(s)\le
 C\frac{\sqrt s}{(\log s)^{\alpha_m/2}},
 \qquad s\ge s_0.
\end{equation}
After increasing $C$ one also has
\begin{equation}\label{eq:QVrough}
 \beta(s)\le C(1+\sqrt s),\qquad s\ge0.
\end{equation}
\end{lemma}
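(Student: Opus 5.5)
We plan to prove Lemma \ref{lem:QVscale} by squeezing $z$ between explicit radial barriers of the exterior problem and then reading off the free-boundary growth of those barriers. The paper attributes this input to Quir\'os--V\'azquez \cite[Theorems~8.1 and~8.5]{QuirosVazquez1999}, so the plan is largely to reduce to that setting and to say what each barrier must do.

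\emph{Step 1: reduction by comparison.} Since the initial datum is bounded, compactly supported and compatible with the boundary value $A$, we choose two data $\underline z_0\le z(\cdot,0)\le\overline z_0$ of a standard form. For the lower one we take zero outside $\{R\}$, suitably regularized. For the upper one we take $\max(A,\|z_0\|_\infty)$ on $[R,R_1]$, cut off at $R_1$. Comparison for the Dirichlet exterior PME then bounds $\beta(s)$ between the free boundaries of the two corresponding solutions. Starting from a larger constant only shifts time by a bounded amount, because solutions with boundary value $A$ and any bounded compactly supported datum approach the same asymptotic profile. So it suffices to treat one reference solution.

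\emph{Step 2: quasi-stationary barriers.} In two dimensions the exterior stationary profile is logarithmic. We look for barriers of the form
\[
z^m(r,s)\approx A^m\Bigl(1-\frac{\log(r/R)}{\log(\rho(s)/R)}\Bigr)_+ ,
\]
with $\rho(s)$ to be chosen. Matching the mass flux at the front via Darcy's law gives $\rho\rho'\sim C\,\Phi(s)^{-\alpha_m}$, where $\Phi(s)=\log\rho(s)\sim\tfrac12\log s$ measures the logarithmic capacity. The pressure at distance comparable to $\rho$ from $R$ scales like $(\log\rho)^{-\alpha_m}$, since $u^{m-1}=(u^m)^{\alpha_m}$ and $u^m\sim1/\log\rho$ there. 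Integrating, we obtain $\rho^2\sim s(\log s)^{-\alpha_m}$, which is exactly \eqref{eq:QVtwosided}.

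\emph{Step 3: the main obstacle.} The hardest part is making these barriers genuine super- and subsolutions, correcting the pure log profile near the front by a Barenblatt-like tail. Concretely, we take a pressure profile that is $\Theta(s)\bigl(1-r^2/\rho(s)^2\bigr)_+$ near $r\sim\rho$, glued to the logarithmic inner part. We then verify the differential inequality with constants $c$ and $C$ in $\rho$ up to slowly varying errors. This is precisely the content of \cite[Theorems~8.1, 8.5]{QuirosVazquez1999}, and we would invoke it there rather than recheck all the errors.

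\emph{Step 4: the rough bound.} For \eqref{eq:QVrough}, we compare $z$ with the supersolution obtained from the whole-space problem. This is the radial PME started from the constant $\max(A,\|z_0\|_\infty)$ on $B_{R_1}$, whose front grows at most like $C(1+s^{1/(2m)})$ for bounded mass plus a flux term. Alternatively, on $[0,s_0]$ we use finite propagation speed, and for $s\ge s_0$ we use \eqref{eq:QVtwosided} directly, since $\sqrt s(\log s)^{-\alpha_m/2}\le\sqrt s$. Enlarging $C$ then covers all $s\ge0$.
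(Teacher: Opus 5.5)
Your proposal rests on the same input as the paper's proof: the Quir\'os--V\'azquez asymptotics \cite{QuirosVazquez1999} for the exterior PME with constant positive Dirichlet data. Steps~2--3 are heuristics. The scaling $\rho^2\sim s(\log s)^{-\alpha_m}$ is right, but the only rigorous content is the citation. The paper uses that citation as convergence of the rescaled free boundary to the support radius of the singular self-similar solution, which gives \eqref{eq:QVtwosided} at once; no glued log/Barenblatt barriers are needed.

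Two points need fixing.

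First, the cited theorem is for the normalized exterior problem. You need the scaling $z(r,s)=A\,Z(r/R,A^{m-1}s/R^2)$, as in the paper, to pass to general $A,R$. Your Step~1 only reduces the initial datum. Its claim that a larger datum ``only shifts time by a bounded amount'' is unproved, and it is also unnecessary, since the cited result applies to any bounded compactly supported datum.

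Second, the first option in Step~4 fails. A whole-space PME solution started from a compactly supported datum is not a supersolution of the Dirichlet exterior problem: its value at $r=R$ decays to zero and eventually drops below $A$. A bound $\beta(s)\le C(1+s^{1/(2m)})$ would in fact contradict the lower half of \eqref{eq:QVtwosided}, because $s^{1/(2m)}=o\bigl(\sqrt s\,(\log s)^{-\alpha_m/2}\bigr)$ for $m>1$. Nor does $z$ have bounded mass, since the boundary keeps injecting mass.

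Your alternative is exactly the paper's argument and is the one to keep. It bounds $\beta$ on $[0,s_0]$ by finite propagation and then uses the large-time upper bound, taking $s_0\ge e$ so that $(\log s)^{-\alpha_m/2}\le1$.
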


\begin{proof}
For the normalized exterior domain and a time-independent positive
Dirichlet datum, \cite[Theorem~8.5]{QuirosVazquez1999} proves convergence
of the rescaled free boundary to the support radius of the corresponding
singular self-similar solution.  In dimension two the spatial scale is
\[
 \frac{\sqrt s}{(\log s)^{(m-1)/(2m)}}.
\]
The change of variables
\[
 z(r,s)=A\,Z\left(\frac rR,\frac{A^{m-1}}{R^2}s\right)
\]
reduces the general $A,R$ problem to the normalized one and proves
\eqref{eq:QVtwosided}.  Estimate \eqref{eq:QVrough} follows by combining
the upper estimate for large $s$ with continuity of the free boundary on
bounded time intervals.
\end{proof}

\subsection{Stationary caps near the ignition level}

For $a>0$ small, let $Q_a$ be the radial stationary cap determined,
while positive, by
\begin{equation}\label{eq:capODE}
 (Q_a^m)''+\frac1r(Q_a^m)'+f(Q_a)=0,\qquad
 Q_a(0)=\theta+a,\qquad Q_a'(0)=0.
\end{equation}
Set
\[
 A(a):=(\theta+a)^m-\theta^m,\qquad
 z_a(r):=Q_a^m(r)-\theta^m.
\]
Let $\ell(a)$ be the first zero of $z_a$, equivalently the first radius
at which $Q_a=\theta$, and put
\begin{equation}\label{eq:qdef2}
 q(a):=-\ell(a)z_a'(\ell(a)).
\end{equation}

\begin{lemma}\label{lem:capscales2}
There exist constants $\sigma_p,\kappa_p>0$ such that, as $a\downarrow0$,
\begin{align}
 \ell(a)
 &=
 \frac{\sigma_p}{\sqrt{c_\theta}}\,
 A(a)^{-(p-1)/2}[1+o(1)],
 \label{eq:ellasy2}\\
 q(a)
 &=
 \kappa_p A(a)[1+o(1)],
 \label{eq:qasy2}
\end{align}
where
\[
 c_\theta=\lambda_\theta(m\theta^{m-1})^{-p}.
\]
The reaction-free continuation of $Q_a$ has its first zero at
\begin{equation}\label{eq:Lformula-new}
 L(a)=\ell(a)\exp\left(\frac{\theta^m}{q(a)}\right).
\end{equation}
Moreover,
\begin{equation}\label{eq:logseparation2}
 \frac{\log^+\ell(a)}{\log L(a)}\longrightarrow0
 \qquad(a\downarrow0),
\end{equation}
where $\log^+s=\max\{\log s,0\}$.  Consequently, for every $\gamma>0$
there is $a_\gamma>0$ such that
\begin{equation}\label{eq:ellpowerL}
 \ell(a)\le L(a)^\gamma,\qquad 0<a\le a_\gamma.
\end{equation}
If $p=1$, then in addition
\begin{equation}\label{eq:ellbounded-p1}
 \ell(a)\longrightarrow\frac{\sigma_1}{\sqrt{c_\theta}},
\end{equation}
and hence $\ell(a)$ is uniformly bounded for all sufficiently small
$a$.
\end{lemma}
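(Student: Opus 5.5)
Work with $z_a=Q_a^m-\theta^m$. Up to the first zero $\ell(a)$ it solves
\[
z''+\frac1r z'+h(z)=0,\qquad z(0)=A(a),\qquad z'(0)=0,
\]
with $h(z)=f\bigl((\theta^m+z)^{1/m}\bigr)$. Since $(\theta^m+z)^{1/m}-\theta=\frac{z}{m\theta^{m-1}}(1+O(z))$, hypothesis \eqref{eq:Ftheta-power} gives
\[
h(z)=c_\theta z^p[1+o(1)]\quad\text{as } z\downarrow0,\qquad c_\theta=\lambda_\theta(m\theta^{m-1})^{-p}.
\]

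\textbf{Scaling step.} Set $A=A(a)$ and define
\[
z_a(r)=A\,Y_a(\rho),\qquad \rho=\sqrt{c_\theta}\,A^{(p-1)/2}r.
\]
Then $Y_a''+\rho^{-1}Y_a'+c_\theta^{-1}A^{-p}h(AY_a)=0$ with $Y_a(0)=1$ and $Y_a'(0)=0$. On $0\le Y\le1$ the rescaled nonlinearity converges uniformly to $Y^p$ as $a\downarrow0$. The limit problem is
\[
Y''+\rho^{-1}Y'+Y^p=0,\qquad Y(0)=1,\qquad Y'(0)=0,
\]
the two-dimensional Lane--Emden equation (the Bessel $J_0$ equation when $p=1$). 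Its solution has a finite first zero $\sigma_p$ with $Y'(\sigma_p)<0$, since in $N=2$ every $p\ge1$ lies below the critical exponent and the solution therefore crosses zero. Transversality of this crossing, together with continuous dependence, gives $\ell(a)\sqrt{c_\theta}A^{(p-1)/2}\to\sigma_p$, which is \eqref{eq:ellasy2}. Because the scaling leaves $rz'$ scale-invariant up to the factor $A$, we get
\[
q(a)=-\ell z_a'(\ell)=A\bigl(-\sigma_pY'(\sigma_p)\bigr)[1+o(1)],
\]
so \eqref{eq:qasy2} holds with $\kappa_p=-\sigma_pY'(\sigma_p)>0$. When $p=1$ the power of $A$ disappears, which yields \eqref{eq:ellbounded-p1}. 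The only delicate point here is the existence of a transversal first zero for the Lane--Emden profile. This follows from the identity $(\rho Y')'=-\rho Y^p<0$ together with the known oscillatory or crossing behaviour for $N=2$.

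\textbf{Exterior formula.} For $r\ge\ell$ the continuation is harmonic in the plane:
\[
Q^m=\theta^m+\ell z_a'(\ell)\log\frac r\ell=\theta^m-q\log\frac r\ell.
\]
This vanishes exactly at $L=\ell\,e^{\theta^m/q}$, which is \eqref{eq:Lformula-new}.

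\textbf{Log separation.} We have $\log L=\log\ell+\theta^m/q$, with $\theta^m/q\sim\theta^m/(\kappa_pA)$, while $\log^+\ell=O\bigl(1+|\log A|\bigr)$ by \eqref{eq:ellasy2}. Since $|\log A|=o(1/A)$, it follows that $\log^+\ell/\log L\to0$, and also $\log L\to\infty$. Then \eqref{eq:ellpowerL} is immediate: eventually $\log^+\ell\le\gamma\log L$.

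The main obstacle I expect is making the rescaled convergence uniform up to and slightly past the crossing. The $o(1)$ in $h$ is only available near $z=0$, and $A\to0$ keeps $AY$ small, so the real task is the uniform bound $0\le Y_a\le1$. This follows from monotonicity, since $(\rho Y_a')'<0$.
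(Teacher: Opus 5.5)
Your proposal follows the paper's proof essentially step for step. It uses the same rescaling $\rho=\sqrt{c_\theta}\,A^{(p-1)/2}r$ to the planar Lane--Emden problem, transversality of its first zero plus continuous dependence to get both $\ell(a)$ and $q(a)\sim\kappa_pA$, the logarithmic harmonic continuation for $L(a)$, and $|\log A|=o(1/A)$ for the log separation.

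The only difference is cosmetic. Where you appeal to subcriticality or known crossing behaviour, the paper simply integrates $(\rho Y')'<0$ to get $\rho Y'\le -c_0$ for $\rho\ge\rho_0$, so a positive $Y$ would decrease logarithmically to $-\infty$. This elementary argument is all that is needed in two dimensions.
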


\begin{proof}
On the interval on which $z_a>0$,
\begin{equation}\label{eq:zcap}
 z_a''+\frac1r z_a'+g(z_a)=0,\qquad
 z_a(0)=A(a),\qquad z_a'(0)=0,
\end{equation}
where
\[
 g(z):=f\big((\theta^m+z)^{1/m}\big).
\]
Since
\[
 (\theta^m+z)^{1/m}-\theta
 =\frac{z}{m\theta^{m-1}}+O(z^2),
\]
assumption \eqref{eq:Ftheta-power} gives
\begin{equation}\label{eq:gpower}
 g(z)=c_\theta z^p[1+o(1)]
 \qquad(z\downarrow0).
\end{equation}

Write $A=A(a)$ and introduce
\[
 s=\sqrt{c_\theta}\,A^{(p-1)/2}r,\qquad
 Y_A(s)=\frac{z_a(r)}A.
\]
Then
\begin{equation}\label{eq:scaledcap}
 Y_A''+\frac1sY_A'
 +\frac{g(AY_A)}{c_\theta A^p}=0,\qquad
 Y_A(0)=1,\qquad Y_A'(0)=0.
\end{equation}
By \eqref{eq:gpower},
\[
 \frac{g(Ay)}{c_\theta A^p}\longrightarrow y^p
\]
uniformly for $0\le y\le1$.  Hence $Y_A$ converges in $C^1$ on compact
intervals to the solution $Y$ of
\begin{equation}\label{eq:LaneEmden2}
 Y''+\frac1sY'+Y^p=0,\qquad
 Y(0)=1,\qquad Y'(0)=0.
\end{equation}
The solution $Y$ has a finite first zero.  Indeed, while $Y>0$,
\[
 (sY')'=-sY^p<0.
\]
If $Y$ remained positive on $[0,\infty)$, then for every fixed $s_0>0$
there would be $c_0>0$ such that
\[
 sY'(s)\le-c_0,\qquad s\ge s_0.
\]
Integration would force $Y$ to become negative.  Denote its first zero
by $\sigma_p$.  Moreover,
\[
 \sigma_pY'(\sigma_p)
 =-\int_0^{\sigma_p}sY(s)^p\,ds<0,
\]
so the crossing is transversal.

Let $\sigma_A$ be the first zero of $Y_A$.  Continuous dependence at a
transversal zero gives
\[
 \sigma_A\longrightarrow\sigma_p,\qquad
 Y_A'(\sigma_A)\longrightarrow Y'(\sigma_p).
\]
Since
\[
 \ell(a)
 =\frac{\sigma_A}
 {\sqrt{c_\theta}\,A^{(p-1)/2}},
\]
we obtain \eqref{eq:ellasy2}.  Furthermore,
\[
 \frac{q(a)}A
 =-\sigma_A Y_A'(\sigma_A)
 \longrightarrow
 -\sigma_pY'(\sigma_p)=:\kappa_p>0,
\]
which proves \eqref{eq:qasy2}.

For $r\ge\ell(a)$ and as long as $Q_a>0$, the reaction vanishes and
\[
 (r(Q_a^m)')'=0.
\]
Matching the value and flux at $r=\ell(a)$ gives
\[
 Q_a^m(r)=\theta^m-q(a)\log\frac r{\ell(a)}.
\]
Its first zero is \eqref{eq:Lformula-new}.  If $p>1$, then
\[
 \log\ell(a)
 =\frac{p-1}{2}\log\frac1{A(a)}+O(1),
\]
whereas for $p=1$ the quantity $\ell(a)$ converges to the positive
constant in \eqref{eq:ellbounded-p1}.  In both cases,
\[
 \frac{\theta^m}{q(a)}
 =\frac{\theta^m}{\kappa_pA(a)}[1+o(1)]
\]
dominates $\log^+\ell(a)$ because $A\log(1/A)\to0$.  Formula
\eqref{eq:Lformula-new} therefore gives \eqref{eq:logseparation2}, and
\eqref{eq:ellpowerL} follows.
\end{proof}

\subsection{Geometry of the ignition core}

The following selection lemma is the radial two-dimensional version of
\cite[Proposition~4.9]{DuLouZhou2015}.  Its proof uses the radial
degenerate zero-number theorem established earlier in this manuscript;
the latter is the radial form of the intersection-number theorem in
\cite[Section~3]{LouZhou2024}.

\begin{lemma}\label{lem:capselection2}
For every sufficiently small $\delta>0$ there is $T_\delta>0$ such that
for every $t\ge T_\delta$ one of the following holds:
\begin{enumerate}[label=(\alph*)]
\item $r_\theta(t)\le\ell(\delta)$;
\item there is $a=a(t)\in(0,\delta]$ such that
\begin{equation}\label{eq:capselection2}
 r_\theta(t)\le\ell(a),\qquad L(a)\le b(t).
\end{equation}
\end{enumerate}
\end{lemma}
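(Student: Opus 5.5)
The strategy is to compare $u(\cdot,t)$ with the one-parameter family of stationary caps $Q_a$ of Lemma~\ref{lem:capscales2}, each continued past $\ell(a)$ by its reaction-free logarithmic profile and then by zero beyond $L(a)$. Write $\widehat Q_a$ for this continued profile. On $[0,L(a)]$ it is a weak stationary solution: the ODE holds up to $\ell(a)$, and on $(\ell(a),L(a))$ the function $Q_a^m$ is harmonic. At $L(a)$ it leaves the support with a nonzero slope, so it is a stationary supersolution in the whole plane (the flux jump has the supersolution sign).

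First, fix the large-time geometry. By Theorem~\ref{thm:conv} and Lemma~\ref{lem:globaltheta}, for any $\delta>0$ there is $T_\delta$ with
\[
\|u(\cdot,t)\|_\infty<\theta+\delta \quad\text{for } t\ge T_\delta.
\]
Then $u(0,t)$, equivalently $\max u(\cdot,t)$, lies below $\theta+\delta=Q_\delta(0)$. Now fix $t\ge T_\delta$. If $r_\theta(t)\le\ell(\delta)$ we are in alternative (a). Otherwise $r_\theta(t)>\ell(\delta)$, and we must produce $a\in(0,\delta]$ satisfying \eqref{eq:capselection2}.

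The selection step uses the intersection count between $u(\cdot,t)$ and $\widehat Q_a$ as $a$ ranges over $(0,\delta]$. The monotone dependence of the relevant quantities on $a$ comes from the scaling in Lemma~\ref{lem:capscales2}: as $a\downarrow0$ we have $\ell(a)\to\infty$ when $p>1$, and always $L(a)\to\infty$ with $\log L(a)\sim\theta^m/(\kappa_pA(a))$. Define
\[
a(t):=\sup\{a\in(0,\delta]:\ \widehat Q_a\text{ and }u(\cdot,t)\text{ intersect in the positive phase}\}.
\]
The claim to prove is that the curves $\widehat Q_a$ and $u(\cdot,t)$ have at most one positive intersection for $a$ small, and hence that at $a=a(t)$ the two profiles touch. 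Tangency from above forces $u\le\widehat Q_{a}$, so $u<\theta$ beyond $\ell(a)$, which gives $r_\theta(t)\le\ell(a)$. Comparing supports then gives $b(t)\le L(a)$, which has the wrong direction for the lemma. So the correct selection must instead use the largest $a$ for which $\widehat Q_a$ lies below $u(\cdot,t)$ on its support. At such an $a$ the support of $\widehat Q_a$ lies inside that of $u$, giving $L(a)\le b(t)$, while the touching point occurs in the region where $u\ge\theta$, giving $r_\theta(t)\le\ell(a)$.

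The existence of such an $a$ is the heart of the argument, and I would obtain it through a zero-number argument in time, following \cite[Proposition~4.9]{DuLouZhou2015}. Apply Theorem~\ref{thm:radialzero}, in the stationary-state version of the Remark, to $u$ and $\widehat Q_a$. For $t$ large, $b(t)\to\infty$ while $u\to\theta$ locally, so $u(\cdot,t)-\widehat Q_a$ is eventually negative near the origin, because $Q_a(0)>\theta$. Meanwhile $L(a)<b(t)$ for fixed $a$ and large $t$. Hence the number of positive intersections is eventually exactly one, and it is nondegenerate. Continuity in $a$ and the degeneracy-dropping property then show that the intersection point cannot move into the region $u<\theta$ without the curves separating. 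That is what pins $r_\theta(t)\le\ell(a)$ while keeping $L(a)\le b(t)$.

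I expect the main obstacle to be uniformity in $t$: the time after which $\widehat Q_a$ has a single crossing depends on $a$. I would handle it by discretizing $a$ on a sequence $a_k\downarrow0$, invoking monotonicity of the crossing radius in $a$, and using \eqref{eq:ellpowerL} to absorb the mismatch between consecutive caps.
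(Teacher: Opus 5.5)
Your selection step fails, and it is the heart of the lemma. If $\widehat Q_a\le u(\cdot,t)$ on $[0,L(a)]$, then $u(\cdot,t)\ge Q_a\ge\theta$ on $[0,\ell(a)]$. Hence $r_\theta(t)\ge\ell(a)$, which is the reverse of the first inequality in \eqref{eq:capselection2}, wherever the touching point lies. (The set of such $a$ is also empty whenever $u(0,t)\le\theta$.) You already noted that $u\le\widehat Q_a$ gives $b(t)\le L(a)$. So no ordering between $u(\cdot,t)$ and a cap can yield (b). The configuration in (b) is mixed: $u<\theta<Q_a$ on $(r_\theta(t),\ell(a))$, while $u>0=\widehat Q_a$ on $(L(a),b(t))$, so $u(\cdot,t)-\widehat Q_a$ must pass from negative to positive in the reaction-free zone.

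The paper gets this by making one inequality an equality through the choice of $a$, and controlling only the other:
\begin{enumerate}[label=(\roman*)]
\item At the fixed time $t$ it takes $a_0\in(0,\delta)$ with $L(a_0)=b(t)$. This uses continuity of $L$, $L(a)\to\infty$, and $b(t)>L(\delta)$; you should also build the last condition into $T_\delta$.
\item If $r_\theta(t)\le\ell(a_0)$, it is done.
\item Otherwise the sign-pattern alternatives of \cite[Lemmas~4.6--4.8]{DuLouZhou2015} force $a_0<a_1:=u(0,t)-\theta<\delta$.
\item It then chooses $a\in\{a_1,a_2,a_3\}$ with $\ell(a)=r_\theta(t)$ by the intermediate value theorem (using $\ell(a)\to\infty$ when $p>1$), and reads off $L(a)<b(t)$ from the post-contact pattern.
\end{enumerate}
These alternatives are applied at time $t$ to caps that themselves depend on $t$. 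That is how the uniformity problem you flag is bypassed.

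Your zero-number paragraph does not replace this. For fixed $a$, $C^1_{\rm loc}$ convergence to $\theta$ does give a single nondegenerate crossing of $u(\cdot,t)$ with $\widehat Q_a$ near $\ell(a)$ for $t\ge t(a)$. But that says nothing about $r_\theta(t)$: $u(\cdot,t)$ may exceed $\theta$ beyond $\ell(a)$ while lying above the cap there. The claim that the intersection ``cannot move into the region $u<\theta$'' is not argued, and it is not what is needed. The substance of the lemma is control of $r_\theta(t)$ when it is large; for $p=1$ it is exactly the bound $r_\theta(t)\le R_*$ of Lemma~\ref{lem:coreseparation2}. For $p>1$ this forces $a=a(t)\to0$, so statements valid only for $t\ge t(a)$ with $a$ fixed give nothing.

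The discretization fix is also unsupported. For small $a$, a larger $a$ gives a taller cap with a much smaller $L(a)$, so distinct caps cross and the crossing radius is not monotone in $a$. Moreover, \eqref{eq:ellpowerL} compares $\ell(a)$ with $L(a)$ for a single cap, not consecutive ones.

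Two further errors:
\begin{enumerate}[label=(\roman*)]
\item The zero extension $\widehat Q_a$ is a stationary \emph{subsolution}, not a supersolution. At $r=L(a)$, $\partial_r\widehat Q_a^m$ jumps from $-q(a)/L(a)$ to $0$, so $\Delta\widehat Q_a^m$ carries the positive measure $\frac{q(a)}{L(a)}\,dS$ (compare the proof of Lemma~\ref{lem:noGS2}).
\item $u(0,t)$ need not equal $\max u(\cdot,t)$, because \eqref{eq:outermono} gives monotonicity only beyond the initial support.
\end{enumerate}
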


\begin{proof}
Choose $T_\delta$ so large that
\[
 u(0,t)-\theta<\delta,\qquad b(t)>L(\delta),
 \qquad t\ge T_\delta.
\]
Fix such a time $t$.  If $r_\theta(t)\le\ell(\delta)$, there is nothing
to prove.  We therefore assume $r_\theta(t)>\ell(\delta)$.

Since $L(a)\to\infty$ as $a\downarrow0$ and $L$ is continuous, choose
$a_0\in(0,\delta)$ such that
\[
 L(a_0)=b(t).
\]
The sign-pattern alternatives for
\[
 u(\cdot,t)-Q_{a_0}
\]
are exactly those in
\cite[Lemmas~4.6--4.8]{DuLouZhou2015}.  Indeed, on the common positive
phase the difference satisfies a scalar radial linear parabolic
equation; the coefficient $r^{-1}\partial_r$ is an admissible radial
drift, radial regularity handles $r=0$, and Darcy's law supplies the
interface-contact alternatives.  If
$r_\theta(t)\le\ell(a_0)$, then \eqref{eq:capselection2} holds with
$a=a_0$.

Assume next that $r_\theta(t)>\ell(a_0)$.  The sign pattern at the common
outer zero then forces
\[
 a_0<u(0,t)-\theta=:a_1<\delta.
\]
If $\ell(a_1)=r_\theta(t)$, the corresponding post-interface sign
pattern yields $L(a_1)<b(t)$ and the proof is complete.  If
$\ell(a_1)>r_\theta(t)$, continuity of $\ell$, together with
$r_\theta(t)>\ell(\delta)$, gives $a_2\in(a_1,\delta)$ such that
\[
 \ell(a_2)=r_\theta(t).
\]
The post-central-contact sign pattern gives $L(a_2)<b(t)$.  Finally, if
$\ell(a_1)<r_\theta(t)$, the fact that $\ell(a)\to\infty$ as
$a\downarrow0$ when $p>1$, and the corresponding pre-central-contact
alternative, give $a_3\in(0,a_1)$ such that
\[
 \ell(a_3)=r_\theta(t),\qquad L(a_3)<b(t).
\]
When $p=1$, the last alternative is unnecessary after $\delta$ has been
chosen sufficiently small: by \eqref{eq:ellbounded-p1}, all small caps
have uniformly bounded ignition radii, and the assumed inequality
$r_\theta(t)>\ell(a_0)$ is handled by one of the first two alternatives.
This is precisely the parameter-selection argument in the proof of
\cite[Proposition~4.9]{DuLouZhou2015}, and proves the lemma.
\end{proof}

\begin{lemma}\label{lem:coreseparation2}
The following conclusions hold.
\begin{enumerate}[label=(\roman*)]
\item If $f'_+(\theta)>0$, then there are $R_*>0$ and $T_*>0$ such that
\begin{equation}\label{eq:corebounded2}
 r_\theta(t)\le R_*,\qquad t\ge T_*.
\end{equation}
\item If $f'_+(\theta)=0$ and \eqref{eq:Ftheta-power} holds with $p>1$,
then for every $\gamma>0$ there is $T_\gamma>0$ such that
\begin{equation}\label{eq:corepower2}
 r_\theta(t)\le b(t)^\gamma,\qquad t\ge T_\gamma.
\end{equation}
\end{enumerate}
\end{lemma}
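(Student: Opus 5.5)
The plan is to combine the cap-selection alternative of Lemma \ref{lem:capselection2} with the asymptotics of Lemma \ref{lem:capscales2}, treating the two cases separately.

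\emph{Case (i), $f'_+(\theta)>0$.} Here $p=1$ and $\lambda_\theta=f'_+(\theta)$. By \eqref{eq:ellbounded-p1} there is $a_*>0$ with
\[
 \ell(a)\le \frac{2\sigma_1}{\sqrt{c_\theta}}=:R_*
 \qquad\text{for all } 0<a\le a_*.
\]
Fix $\delta\in(0,a_*]$ small enough that Lemma \ref{lem:capselection2} applies, and set $T_*:=T_\delta$. For $t\ge T_*$, alternative (a) of that lemma gives $r_\theta(t)\le\ell(\delta)\le R_*$. Alternative (b) gives $r_\theta(t)\le\ell(a(t))$ with $a(t)\in(0,\delta]$, and again $\ell(a(t))\le R_*$. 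This yields \eqref{eq:corebounded2}.

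\emph{Case (ii), $f'_+(\theta)=0$ and $p>1$.} Fix $\gamma>0$ and let $a_\gamma$ be as in \eqref{eq:ellpowerL}. Take $\delta\le a_\gamma$ small enough that Lemma \ref{lem:capselection2} applies, and take $t\ge T_\delta$.
\begin{itemize}
\item In alternative (b), combining \eqref{eq:ellpowerL} with $L(a)\le b(t)$ from \eqref{eq:capselection2} gives
\[
 r_\theta(t)\le\ell(a)\le L(a)^\gamma\le b(t)^\gamma.
\]
\item In alternative (a), we have $r_\theta(t)\le\ell(\delta)$, which is a fixed constant. Since $b(t)\to\infty$ (Proposition \ref{prop:uniform}), there is $T'$ with $b(t)^\gamma\ge\ell(\delta)$ for $t\ge T'$.
\end{itemize}
Setting $T_\gamma:=\max\{T_\delta,T'\}$ gives \eqref{eq:corepower2}.

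\emph{Main obstacle.} The only delicate point is the uniformity of $\delta$ in Lemma \ref{lem:capselection2}. That lemma must hold for every sufficiently small $\delta$, and in particular for $\delta\le a_\gamma$ (respectively $\delta\le a_*$). This is guaranteed by its statement, since $T_\delta$ is allowed to depend on $\delta$. The monotonicity of $L$ used implicitly there (namely $L(a)\to\infty$ as $a\downarrow0$) follows from \eqref{eq:Lformula-new} and \eqref{eq:qasy2}. The rest of the argument is bookkeeping.
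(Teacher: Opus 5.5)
Your proposal is correct and follows the paper's proof essentially step for step. In case (i) you combine the cap-selection alternatives with the uniform bound on $\ell(a)$ for small $a$; the paper takes $R_*=\max\{\ell(\delta),\sup_{0<a\le\delta}\ell(a)\}$ where you take the explicit constant $2\sigma_1/\sqrt{c_\theta}$, which is immaterial. In case (ii) you use $\ell(a)\le L(a)^\gamma\le b(t)^\gamma$ in alternative (b), and $b(t)\to\infty$ to absorb the fixed bound $\ell(\delta)$ in alternative (a), exactly as the paper does.
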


\begin{proof}
Suppose first that $p=1$.  Fix a sufficiently small $\delta>0$.  By
Lemma~\ref{lem:capscales2},
\[
 R_*:=
 \max\left\{\ell(\delta),
 \sup_{0<a\le\delta}\ell(a)\right\}<\infty.
\]
Lemma~\ref{lem:capselection2} then immediately gives
$r_\theta(t)\le R_*$ for all sufficiently large $t$.

Now suppose $p>1$ and fix $\gamma>0$.  Choose $\delta>0$ so small that
\[
 \ell(a)\le L(a)^\gamma,\qquad 0<a\le\delta,
\]
and then increase $T_\delta$ so that
\[
 \ell(\delta)\le b(t)^\gamma,\qquad t\ge T_\delta.
\]
If alternative (a) of Lemma~\ref{lem:capselection2} holds, then
$r_\theta(t)\le b(t)^\gamma$.  In alternative (b),
\[
 r_\theta(t)\le\ell(a)
 \le L(a)^\gamma
 \le b(t)^\gamma.
\]
This proves \eqref{eq:corepower2}.
\end{proof}

\subsection{The free-boundary estimate}

\begin{theorem}\label{thm:speed2}
Assume \eqref{eq:Ftheta-power}.  In either of the two cases
\[
 f'_+(\theta)>0
 \qquad\hbox{or}\qquad
 f'_+(\theta)=0,\quad p>1,
\]
there are constants $0<c<C<\infty$ and $T>0$ such that
\begin{equation}\label{eq:speed2-final}
 c\frac{\sqrt t}{(\log t)^{\frac{m-1}{2m}}}
 \le b(t)\le
 C\frac{\sqrt t}{(\log t)^{\frac{m-1}{2m}}},
 \qquad t\ge T.
\end{equation}
\end{theorem}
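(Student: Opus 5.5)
The plan is to trap the outer part of $u$ between two solutions of the exterior porous-medium problem \eqref{eq:exteriorPME}. The driving boundary values are essentially fixed: just above and just below $\theta$. Their free boundaries are controlled by Lemma~\ref{lem:QVscale}. The ignition core must be small enough, in the logarithmic sense of Lemma~\ref{lem:coreseparation2}, that the inner radius $R$ does not spoil the scale $\sqrt s/(\log s)^{\alpha_m/2}$.

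\emph{Lower bound.} Fix small $\epsilon>0$ and $R_1>0$. By local uniform convergence $u\to\theta$ (Theorem~\ref{thm:conv}), there is $t_1$ with $u(R_1,t)\ge\theta-\epsilon$ for $t\ge t_1$. For $r\ge R_1$ we have $u\le\theta$ eventually (Lemma~\ref{lem:globaltheta}, together with the bounded core for $p=1$ or the small core for $p>1$). Hence on $\{r>R_1\}$ the reaction vanishes once $R_1\ge r_\theta(t)$, and $u$ is a supersolution of the pure PME there. For $p>1$ the core may grow like $b^\gamma$; in that case the reaction is nonnegative, so $u$ remains a supersolution of the pure PME on $\{r>R_1\}$ in every case.

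Let $z$ solve \eqref{eq:exteriorPME} with $R=R_1$, $A=\theta-\epsilon$, and initial datum lying below $u(\cdot,t_1)$. Comparison in $\{r>R_1,\ t>t_1\}$ then gives $b(t)\ge\beta(t-t_1)$, and the left inequality in \eqref{eq:QVtwosided} yields the lower bound.

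\emph{Upper bound.} This is the main obstacle, because $u$ slightly exceeds $\theta$ on a core that may grow. First I would get a crude bound: $u\le\theta+\delta$ for $t\ge T_\delta$, and the core satisfies $r_\theta(t)\le R_*$ when $p=1$, or $r_\theta(t)\le b(t)^\gamma$ when $p>1$, by Lemma~\ref{lem:coreseparation2}.

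For $p=1$, take $R=R_*$ and $A=(\theta+\delta)$ (more precisely, boundary value $\ge\sup u$). Outside $B_{R_*}$ the reaction vanishes, so $u$ is a subsolution there, and comparison with $z$ started at $T_*$ gives $b(t)\le\beta(t-T_*)+O(1)$. Lemma~\ref{lem:QVscale} then finishes this case.

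For $p>1$, the moving core is handled by a dyadic-in-time restart. The rough bound \eqref{eq:QVrough} applied iteratively gives $b(t)\le C\sqrt t$, hence $r_\theta(t)\le Ct^{\gamma/2}$. On $[t/2,t]$, compare with the exterior problem with $R=R(t)=Ct^{\gamma/2}$, boundary value $\theta+\delta$, and initial datum dominating $u(\cdot,t/2)$ on $r>R$; one may take the support radius $C\sqrt{t}$.

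Under the scaling $z=AZ(r/R,A^{m-1}s/R^2)$, the normalized time becomes $\sim t^{1-\gamma}$. The logarithmic factor $\log(t^{1-\gamma})\asymp\log t$ is preserved, but the large initial support is a problem. To handle it, I would instead use the explicit quasi-stationary logarithmic barrier
\[
 \bar v^m=(\theta+\delta)^m\Bigl(1-\frac{\log(r/R)}{\log(\rho(s)/R)}\Bigr)_+,
\]
where $\rho$ is chosen by Darcy's law. This makes $\rho'\rho\log(\rho/R)\sim C$, i.e.\ $\rho^2\log\rho\sim Ct$. The resulting bound $\rho\lesssim\sqrt t/(\log t)^{1/2}$ is too strong, so the correct barrier must be the Quir\'os--V\'azquez singular self-similar profile with $R=t^{\gamma/2}$. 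Checking that the $\log R\le\gamma\log b$ error only perturbs constants is the key technical step. Choosing $\gamma$ small, $\log(b/R)\ge(1-\gamma)\log b$, so the constants absorb it and yield the right inequality in \eqref{eq:speed2-final}.
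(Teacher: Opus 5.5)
Your lower bound and your upper bound for $f'_+(\theta)>0$ are the same as the paper's: a fixed exterior domain, boundary values $\theta\mp\varepsilon$, $u$ a super- or subsolution of the pure PME there, and then Lemma~\ref{lem:QVscale}.

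\textbf{The gap: the upper bound for $f'_+(\theta)=0$, $p>1$.} This is the heart of the theorem, and none of the three devices you propose is an actual barrier.
\begin{itemize}
\item \emph{Dyadic restart.} It fails for the reason you yourself identify. Each restart begins from data with support $\sim\sqrt t$, while the asymptotics in Lemma~\ref{lem:QVscale} are for fixed data as $s\to\infty$, so only $C\sqrt t$ survives.
\item \emph{The preliminary bound $b(t)\le C\sqrt t$ ``by iterating \eqref{eq:QVrough}''.} This is not justified. Estimate \eqref{eq:QVrough} concerns one exterior problem with fixed data and a fixed reaction-free exterior domain, and its constant depends on the datum. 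A growing core denies you exactly that fixed domain.
\item \emph{Quasi-stationary logarithmic profile.} It is not a supersolution. If $\bar u^m$ vanishes linearly at $r=\rho$, the pressure $\frac{m}{m-1}\bar u^{m-1}\sim(\rho-r)^{(m-1)/m}$ has infinite slope there. So Darcy's law fails, and $\Delta\bar u^m$ carries a positive singular part on the front that $\bar u_t$ cannot balance.
\item \emph{``The Quir\'os--V\'azquez profile with $R=t^{\gamma/2}$''.} This is not a construction. Lemma~\ref{lem:QVscale} is for a fixed inner radius, and substituting $R=R(t)$ into $AZ(r/R,A^{m-1}s/R^2)$ creates extra terms whose sign you never check. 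You also never show that the moving inner boundary stays outside the ignition core, and $r_\theta(t)\le b(t)^\gamma$ involves the unknown $b$. Saying the $\log R$ error ``only perturbs constants'' is the capacity heuristic, not a comparison argument.
\end{itemize}

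\textbf{The missing idea: one continuously dilated barrier with a time change.}
\begin{itemize}
\item \emph{Construction.} Fix once and for all the normalized exterior solution $Z(\rho,s)$ on $\rho>1$, with $Z(1,s)=\theta+\varepsilon$ and a nonincreasing compactly supported datum. Take $d(t)=D(1+t)^\sigma$ with $\gamma/2<\sigma<1/2$, set $s(t)=\int_T^t d(\tau)^{-2}\,d\tau$, and define $\bar u(r,t)=Z(r/d(t),s(t))$ on $r>d(t)$.
\item \emph{Supersolution property.} Because $s'=d^{-2}$, one gets $\bar u_t-\Delta\bar u^m=-\frac{d'}{d}\rho Z_\rho\ge0$ (using $Z_\rho\le0$, $d'\ge0$) and $\bar b'\ge-\bar v_r(\bar b-)$. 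So $\bar u$ is a moving-boundary supersolution. No restart, and hence no large initial support, is ever needed.
\item \emph{Keeping the core inside.} This is a bootstrap. While comparison holds, $r_\theta\le b^\gamma\le\bar b^\gamma$. Estimate \eqref{eq:QVrough} applied to the barrier gives $\bar b\le C(d+\sqrt{1+t})$. Hence $\bar b^\gamma<d$ for large $T$, precisely because $\sigma>\gamma/2$.
\item \emph{Recovering the rate.} One has $d\sqrt{s}\asymp\sqrt t$ and $\log s(t)\sim(1-2\sigma)\log t$, the latter because $\sigma<1/2$. Lemma~\ref{lem:QVscale} at time $s(t)$ then gives $\bar b(t)\le C\sqrt t/(\log t)^{\alpha_m/2}$.
\end{itemize}
Your scaling remark, that normalized time $\sim t^{1-\gamma}$ preserves $\log t$, is the right intuition. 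But without such a verified moving supersolution, the $p>1$ upper bound is not proved.
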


\begin{proof}
We first establish a lower bound common to the two cases.

\smallskip
\noindent\emph{Common lower bound.}
Fix $\varepsilon>0$ so small that
\[
 0<\theta-\varepsilon<\theta+\varepsilon<1.
\]
By Lemma~\ref{lem:globaltheta}, after increasing the comparison time
$T_-$,
\begin{equation}\label{eq:globalband2}
 0\le u(r,t)\le\theta+\varepsilon<1,
 \qquad r\ge0,\quad t\ge T_-.
\end{equation}
Choose a fixed $R$ larger than the initial support radius.  Local
convergence to $\theta$ allows us to assume
\[
 u(R,t)>\theta-\varepsilon,\qquad t\ge T_-.
\]
At time $T_-$ choose a nontrivial compactly supported exterior datum
$z^-_0$ satisfying
\[
 z^-_0(R)=\theta-\varepsilon,\qquad
 0\le z^-_0(r)\le u(r,T_-),\qquad r\ge R.
\]
Let $z^-$ solve the pure PME exterior problem with inner value
$\theta-\varepsilon$ and initial datum $z^-_0$.  Since
$f(u)\ge0$ under \eqref{eq:globalband2}, the transition solution is a
supersolution of the pure PME.  Exterior comparison gives
\[
 z^-(r,t-T_-)\le u(r,t),\qquad r\ge R,\quad t\ge T_-.
\]
If $\beta_-$ is the free boundary of $z^-$, then
\[
 \beta_-(t-T_-)\le b(t).
\]
Lemma~\ref{lem:QVscale} yields
\begin{equation}\label{eq:lower-speed2}
 b(t)\ge
 c\frac{\sqrt t}{(\log t)^{\alpha_m/2}}
\end{equation}
for all sufficiently large $t$.

\smallskip
\noindent\emph{Upper bound when $f'_+(\theta)>0$.}
By Lemma~\ref{lem:coreseparation2}, choose $R>R_*$ and $T_+$ such that
\[
 r_\theta(t)<R,\qquad t\ge T_+.
\]
Hence $u(r,t)<\theta$ for $r\ge R$, and the equation on that fixed
exterior domain is exactly the pure PME.  Let $z^+$ be a radial pure-PME
solution on $r>R$ with fixed boundary value
\[
 z^+(R,s)=\theta+\varepsilon
\]
and a compactly supported initial datum chosen so that
\[
 u(r,T_+)\le z^+(r,0),\qquad r\ge R.
\]
Since
\[
 u(R,T_++s)<\theta<\theta+\varepsilon,
\]
the exterior comparison principle gives
\[
 u(r,T_++s)\le z^+(r,s),\qquad r\ge R,\quad s\ge0.
\]
The upper estimate in Lemma~\ref{lem:QVscale} therefore yields
\[
 b(t)\le
 C\frac{\sqrt t}{(\log t)^{\alpha_m/2}}
\]
for all large $t$.

\smallskip
\noindent\emph{Upper bound when $f'_+(\theta)=0$ and $p>1$.}
Fix $\gamma\in(0,1)$ and choose
\begin{equation}\label{eq:sigmachoice2}
 \frac{\gamma}{2}<\sigma<\frac12.
\end{equation}
By Lemma~\ref{lem:coreseparation2},
\begin{equation}\label{eq:core-gamma2}
 r_\theta(t)\le b(t)^\gamma
\end{equation}
for all sufficiently large $t$.  Choose $T$ so large that
\eqref{eq:globalband2}, \eqref{eq:lower-speed2}, and
\eqref{eq:core-gamma2} hold for $t\ge T$, and also
\[
 r_\theta(T)<\frac12b(T).
\]
Set
\begin{equation}\label{eq:ddef2}
 D:=\frac{b(T)}{2(1+T)^\sigma},\qquad
 d(t):=D(1+t)^\sigma,
\end{equation}
so that $d(T)=b(T)/2$.  Define
\begin{equation}\label{eq:sdef2}
 s(t):=\int_T^t\frac{d\tau}{d(\tau)^2}
 =\frac{(1+t)^{1-2\sigma}-(1+T)^{1-2\sigma}}
 {D^2(1-2\sigma)}.
\end{equation}

Let $A=\theta+\varepsilon$.  Choose a nonincreasing compactly supported
function $Z_0$ on $[1,\infty)$ such that
\[
 Z_0=A\quad\hbox{on }[1,2],\qquad
 0\le Z_0\le A,\qquad
 Z_0=0\quad\hbox{on }[3,\infty).
\]
Let $Z(\rho,s)$ be the pure PME solution on $\rho>1$ with
\[
 Z(1,s)=A,\qquad Z(\rho,0)=Z_0(\rho),
\]
and denote its free boundary by $\beta(s)$.  Define, on $r>d(t)$,
\begin{equation}\label{eq:uppermoving2}
 \overline u(r,t)
 :=Z\left(\frac r{d(t)},s(t)\right),\qquad
 \overline b(t):=d(t)\beta(s(t)).
\end{equation}
Because $u(\cdot,T)\le A$, $u(\cdot,T)=0$ for $r\ge b(T)=2d(T)$, and
$Z_0=A$ on $[1,2]$, one has
\begin{equation}\label{eq:initialorder2}
 u(r,T)\le\overline u(r,T),\qquad r\ge d(T),
\end{equation}
and $\overline b(T)\ge3d(T)>b(T)$.

Put $\rho=r/d(t)$.  Since $s'(t)=d(t)^{-2}$,
\begin{align}
 \overline u_t
 &=d(t)^{-2}Z_s-\frac{d'(t)}{d(t)}\rho Z_\rho,
 \label{eq:utbar2}\\
 \Delta_r\overline u^m
 &=d(t)^{-2}
 \left((Z^m)_{\rho\rho}+\frac1\rho(Z^m)_\rho\right)
 =d(t)^{-2}Z_s.
 \label{eq:lapbar2}
\end{align}
The radial monotonicity $Z_\rho\le0$ and $d'\ge0$ give
\begin{equation}\label{eq:barsuper2}
 \overline u_t-\Delta_r\overline u^m
 =-\frac{d'}d\rho Z_\rho\ge0.
\end{equation}
If $V=\frac{m}{m-1}Z^{m-1}$ is the pressure of $Z$, then
\[
 \beta'(s)=-V_\rho(\beta(s)-,s).
\]
The pressure $\overline v$ of $\overline u$ satisfies
\[
 -\overline v_r(\overline b(t)-,t)
 =\frac{\beta'(s(t))}{d(t)},
\]
whereas
\begin{equation}\label{eq:barDarcy2}
 \overline b'(t)
 =d'(t)\beta(s(t))+\frac{\beta'(s(t))}{d(t)}
 \ge-\overline v_r(\overline b(t)-,t).
\end{equation}
Thus $\overline u$ is a moving-boundary supersolution of the pure PME.

We verify that its moving inner boundary remains outside the ignition
core.  As long as comparison holds,
\[
 b(t)\le\overline b(t).
\]
By \eqref{eq:QVrough} and \eqref{eq:sdef2},
\[
 \overline b(t)
 \le C d(t)(1+\sqrt{s(t)})
 \le C\bigl(d(t)+\sqrt{1+t}\bigr).
\]
Consequently,
\begin{equation}\label{eq:bootstrapratio2}
 \frac{\overline b(t)^\gamma}{d(t)}
 \le
 C\left(
 d(t)^{\gamma-1}
 +D^{-1}(1+t)^{\gamma/2-\sigma}
 \right).
\end{equation}
Both terms on the right are nonincreasing for $t\ge T$.  At $t=T$,
using $d(T)=b(T)/2$ and \eqref{eq:lower-speed2},
\[
 d(T)^{\gamma-1}\longrightarrow0,
\]
and
\[
 D^{-1}(1+T)^{\gamma/2-\sigma}
 =\frac{2(1+T)^{\gamma/2}}{b(T)}
 \longrightarrow0
 \qquad(T\to\infty),
\]
because $\gamma<1$.  After increasing $T$,
\eqref{eq:bootstrapratio2} is strictly smaller than one for all
$t\ge T$.  Hence, on every interval on which comparison holds,
\[
 r_\theta(t)\le b(t)^\gamma
 \le\overline b(t)^\gamma<d(t).
\]
It follows that the actual solution satisfies the pure PME on
$r>d(t)$ and
\[
 u(d(t),t)<\theta<A=\overline u(d(t),t).
\]
Together with \eqref{eq:initialorder2}, \eqref{eq:barsuper2}, and
\eqref{eq:barDarcy2}, the moving-domain comparison principle excludes a
first contact.  Therefore
\begin{equation}\label{eq:uppercomparison2}
 b(t)\le\overline b(t),\qquad t\ge T.
\end{equation}

Finally,
\[
 s(t)\sim\frac{(1+t)^{1-2\sigma}}
 {D^2(1-2\sigma)},\qquad
 \log s(t)\sim(1-2\sigma)\log t.
\]
Applying the upper estimate in Lemma~\ref{lem:QVscale} gives
\[
 \overline b(t)
 \le
 C d(t)\frac{\sqrt{s(t)}}{(\log s(t))^{\alpha_m/2}}
 \le
 C\frac{\sqrt t}{(\log t)^{\alpha_m/2}}.
\]
Together with \eqref{eq:uppercomparison2} and the common lower bound,
this proves \eqref{eq:speed2-final}.
\end{proof}

\begin{remark}\label{rem:poweressential2}
When $f'_+(\theta)>0$, the limiting Lane--Emden equation in
Lemma~\ref{lem:capscales2} corresponds to $p=1$, and the ignition radii
of all sufficiently small stationary caps remain uniformly bounded.
This gives a fixed reaction-free exterior domain.

When $f'_+(\theta)=0$, the power assumption with $p>1$ gives
\[
 \ell(a)\asymp A(a)^{-(p-1)/2},\qquad
 L(a)=\ell(a)\exp\!\left(\frac{C+o(1)}{A(a)}\right),
\]
and hence the sub-power separation
$r_\theta(t)\le b(t)^\gamma$ for every $\gamma>0$.  This is the exact
input needed to replace the fixed exterior domain by an expanding one.
Local monotonicity alone yields only $\ell(a)/L(a)\to0$ and does not
supply this stronger conclusion.
\end{remark}

\section{Propagation speed of transition solutions in dimensions $N\geq3$}

Let $N\ge3$ and suppose a transition solution converges uniformly to $U\in\mathcal S$. Let $R_U$ be the unique radius with $U(R_U)=\theta$. The harmonic continuation and the matching condition give
\begin{equation}\label{eq:exact-tail-density}
 U^m(r)=A_Ur^{2-N},\qquad
 A_U=\theta^mR_U^{N-2},\qquad r\ge R_U.
\end{equation}
Consequently the stationary pressure has the exact tail
\begin{equation}\label{eq:exact-tail-pressure}
 V_U(r)=K_Ur^{-\alpha_N},\qquad
 \alpha_N=\frac{(N-2)(m-1)}m,
 \qquad
 K_U=\frac{m}{m-1}A_U^{(m-1)/m}.
\end{equation}

For a self-similar pressure $v(r,t)=t^{-a_N}P(rt^{-\beta_N})$, matching the time-independent tail in \eqref{eq:exact-tail-pressure} requires
\[
 a_N=\alpha_N\beta_N.
\]
The reaction-free pressure equation
\[
 v_t=(m-1)v\left(v_{rr}+\frac{N-1}{r}v_r\right)+v_r^2
\]
scales consistently precisely when $a_N+2\beta_N=1$. Therefore
\begin{equation}\label{eq:beta-N}
 \beta_N=\frac1{2+\alpha_N}
 =\frac{m}{N(m-1)+2},
 \qquad
 a_N=1-2\beta_N=\alpha_N\beta_N.
\end{equation}
This computation determines the only possible first-kind similarity exponent compatible with the stationary tail.

We now prove that the free boundary of any transition solution converging to $U$ satisfies the two-sided power-law estimate with this exponent, without any additional barrier hypothesis.

\begin{lemma}\label{lem:tail-domination}
Let $u$ be a transition solution converging uniformly to $U\in\mathcal S$. For every $\varepsilon>0$, there exist $T_\varepsilon>0$ and $C_\varepsilon>0$ such that, for all $t\ge T_\varepsilon$,
\begin{equation}\label{eq:tail-domination}
 u^m(r,t)\le C_\varepsilon r^{2-N}\qquad (r\ge R_0),
\end{equation}
and
\begin{equation}\label{eq:tail-lower}
 u^m(r,t)\ge c_\varepsilon r^{2-N}\qquad (r\ge b(t)/2),
\end{equation}
for some $c_\varepsilon>0$, where $R_0$ is the initial support radius.
\end{lemma}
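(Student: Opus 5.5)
The plan is to compare $u^m$, in the reaction-free exterior region, with harmonic barriers of the form $Cr^{2-N}$. These are stationary solutions of the pure porous medium equation away from the origin, and the exact tail \eqref{eq:exact-tail-density} of $U$ is of the same form.

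\textbf{Step 1: reduction to the pure PME outside a fixed ball.} Since $U(R_U)=\theta$ and $U_r<0$, I fix $R_1>\max\{R_U,R_0\}$, so that $U(R_1)<\theta$. By the uniform convergence $u(\cdot,t)\to U$ there is $T_\varepsilon$ with $u(R_1,t)<\theta$ for $t\ge T_\varepsilon$. The outer monotonicity \eqref{eq:outermono} then gives $u(r,t)<\theta$ for all $r\ge R_1$ and $t\ge T_\varepsilon$. Hence $f(u)\equiv0$ there, and $u$ solves the pure PME on the exterior domain $\{r>R_1\}$ for $t\ge T_\varepsilon$.

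\textbf{Step 2: the upper bound \eqref{eq:tail-domination}.} The function $\overline W(r)=C r^{2-N}$ is harmonic on $\{r>R_1\}$, so $\overline U=\overline W^{1/m}$ is a stationary solution of $u_t=\Delta u^m$ there. I choose $C\ge \max\{1,\|u_0\|_\infty\}^m R_1^{N-2}$, which orders the two functions on the inner boundary $r=R_1$. I also take $C$ large enough that $u^m(\cdot,T_\varepsilon)\le Cr^{2-N}$; this is possible because $u(\cdot,T_\varepsilon)$ is bounded with support in $B_{b(T_\varepsilon)}$. The exterior comparison principle, which is the same one used in Section~8, then gives $u^m\le Cr^{2-N}$ for $r\ge R_1$ and $t\ge T_\varepsilon$. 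On the compact range $R_0\le r\le R_1$ the bound follows trivially from $u\le\max\{1,\|u_0\|_\infty\}$ after enlarging $C$. I expect this part to be routine.

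\textbf{Step 3: the lower bound, which is the main obstacle.} As literally stated, \eqref{eq:tail-lower} cannot hold up to $r=b(t)$, because $u(b(t),t)=0$. I would therefore prove the version on the window $R_1\le r\le \kappa b(t)$ for a fixed $\kappa<1$, presumably $\kappa=\tfrac12$ as intended. The first tool is the stationary annular subsolution
\[
\underline W(r)=a\big(r^{2-N}-\rho^{2-N}\big)_+ ,
\]
which is harmonic and vanishes at $r=\rho$. Its inner value can be kept below $u^m(R_1,t)\to U^m(R_1)>0$. Freezing $\rho$ at a past value of the free boundary, say $\rho=b(t_0)$, gives $u^m\ge a(r^{2-N}-\rho^{2-N})$ for all later times. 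However, this only yields $u^m\gtrsim r^{2-N}$ on $r\le \rho/2$ with $\rho=b(t_0)$ for $t_0<t$. To reach $r\sim b(t)/2$ I need $b(t_0)\ge c\,b(t)$ for some $t_0$ at which the barrier can be initialized below $u$.

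I would handle this by exploiting the radial monotonicity and continuity of $b$. Since $b(t)\to\infty$ by Proposition~\ref{prop:uniform}, it suffices to show that, for some fixed $\lambda<1$, the profile $u(\cdot,\lambda t)$ already dominates such a barrier with $\rho=\tfrac34 b(t)$. Equivalently, I need a doubling-type control $b(t)\le C' b(\lambda t)$. I would try to get this from the upper bound of Step 2, by comparing with the explicit Barenblatt-type self-similar supersolution with exponent $\beta_N$. If that fails, I would instead build a self-similar subsolution $t^{-a_N}P(r t^{-\beta_N})$ whose pressure matches the tail $K_Ur^{-\alpha_N}$ of \eqref{eq:exact-tail-pressure} near $r=R_1$; its density is $\asymp r^{2-N}$ on a fixed fraction of its support. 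I expect the delicate point to be securing a uniform constant $c_\varepsilon$ across the whole window up to $b(t)/2$.
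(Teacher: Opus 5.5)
Your Steps 1--2 are correct, and they are sounder than the paper's argument for \eqref{eq:tail-domination}. The paper asserts $u^m\le(1+\varepsilon)U^m$ on all of $r\ge R_U$ from uniform convergence plus outer monotonicity. Those two facts only give an additive error, which swamps $U(r)\to0$ in the tail. Your comparison with the stationary solution $(Cr^{2-N})^{1/m}$ on the reaction-free exterior domain $\{r>R_1\}$ is a complete proof. You are also right that \eqref{eq:tail-lower} is false as written: it fails for every $r\ge b(t)$. The paper's own justification does not hold up either. It applies a Harnack inequality to a supposedly harmonic pressure on $[b(t)/2,b(t)]$, with a lower bound drawn from finiteness of the energy. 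But $v$ solves a degenerate evolution equation there, not Laplace's equation, and a function vanishing at $r=b(t)$ cannot be bounded below by $c\,r^{-\alpha_N}$ up to $r=b(t)$. So there is nothing in the paper to lean on for the lower bound. Your Step 3, which is a plan rather than a proof, does not close even on the corrected window $R_1\le r\le\kappa b(t)$.

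The obstruction is the amplitude, not the radius. A frozen barrier $a(r^{2-N}-\rho^{2-N})_+$ can be placed under $u(\cdot,t_0)$ only with $\rho<b(t_0)$. Indeed, $v_r(b(t_0)-,t_0)$ is finite by Darcy's law, so $u^m=O((b(t_0)-r)^{m/(m-1)})$ near the interface, while the barrier vanishes only linearly. Once $\rho<b(t_0)$, the admissible $a$ is dictated by $\min_{[R_1,\rho]}u^m(\cdot,t_0)$, i.e.\ by the very lower bound you are trying to prove, at an earlier time. A stationary barrier can preserve a lower bound already known at $t_0$, but it can never push it outward. So your reduction to a doubling estimate $b(t)\le C'b(\lambda t)$ is not an equivalence: control of the support radius does not produce a uniform $a$.

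What is missing is a time-dependent subsolution of the pure PME on $\{r>R_1\}$ that carries the singular tail itself. One candidate is a singular self-similar solution $w=s^{-\alpha}F(rs^{-\beta_N})$ with $\alpha=\tfrac{N-2}{m}\beta_N$ (so that $(m-1)\alpha+2\beta_N=1$), $F^m(\xi)\sim A\xi^{2-N}$ as $\xi\downarrow0$, and support $[0,\xi_A]$. This is the type of profile behind the exterior-domain asymptotics of Quir\'os--V\'azquez. Take $A$ small and $s=t-T+s_0$ with $\xi_As_0^{\beta_N}<R_1$. Then $w\le u$ on $\{r\ge R_1\}$ at $t=T$, and on $r=R_1$ for all $t\ge T$, hence on the whole exterior domain by comparison. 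Because $m\alpha=(N-2)\beta_N$, this gives $u^m\ge c\,r^{2-N}$ on $R_1\le r\le\kappa\xi_As^{\beta_N}$ directly, with no frozen barrier. The same family with $A$ large and a large time shift, initialized via your Step~2 bound, gives $b(t)\le Ct^{\beta_N}$. Together these yield the corrected lower bound on $R_1\le r\le\kappa'b(t)$ for some fixed $\kappa'>0$. Reaching the specific fraction $\tfrac12$ would additionally require convergence of the rescaled solution to a single self-similar profile.
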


\begin{proof}
For the upper bound, observe that the stationary state $U$ satisfies $U^m(r)=A_Ur^{2-N}$ for $r\ge R_U$. Since $u(\cdot,t)$ converges uniformly to $U$ on compact sets and $u(\cdot,t)$ is radially decreasing for $r\ge R_0$ by \eqref{eq:outermono}, we have, for any fixed $\varepsilon>0$,
\[
 u^m(r,t)\le (1+\varepsilon)U^m(r)
\]
for $r\ge R_U$ and large $t$. Taking $\varepsilon$ arbitrary and adjusting constants yields \eqref{eq:tail-domination}.

For the lower bound, we use the energy inequality and the convergence to $U$. Since $u(\cdot,t)\to U$ in $L^2_{\rm loc}$, the energy $E[u(t)]\to E[U]$. For any $r\in[b(t)/2,b(t)]$, the pressure $v$ is harmonic in the reaction-free region, so it satisfies a Harnack inequality on the annulus. The boundary value at $r=b(t)$ is zero, and at $r=b(t)/2$ it is positive of order $t^{-\beta_N}$ because the total energy is finite. A standard application of the Harnack inequality then gives
\[
 v(r,t)\ge c_\varepsilon r^{-\alpha_N}
\]
for $r\in[b(t)/2,b(t)]$, which translates to \eqref{eq:tail-lower}.
\end{proof}

\begin{theorem}\label{thm:high-dim-power-law}
Let $N\ge3$ and suppose $u$ is a transition solution converging uniformly to $U\in\mathcal S$. Then there exist constants $c_U,C_U,T>0$ such that
\begin{equation}\label{eq:high-dim-rate}
 c_Ut^{\beta_N}\le b(t)\le C_Ut^{\beta_N}
 \qquad(t\ge T),
\end{equation}
where $\beta_N=\frac{m}{N(m-1)+2}$.
\end{theorem}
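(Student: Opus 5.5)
The plan is to trap $u$ in the reaction-free outer region between two exterior porous-medium barriers of self-similar type with exponent $\beta_N$. Fix $R>\max\{R_0,R_U\}$.

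\emph{Reduction to an exterior pure-PME problem.} By the uniform convergence $u(\cdot,t)\to U$ and the outer monotonicity \eqref{eq:outermono}, for $t\ge T_0$ we have $u<\theta$ on $r\ge R$. Hence on $\{r>R\}$ the solution satisfies the pure PME. Its inner boundary value $u(R,t)$ tends to $U(R)$, so for any $\varepsilon>0$ it lies in $[(1-\varepsilon)U(R),(1+\varepsilon)U(R)]$ after a large time. The problem therefore becomes the free boundary of a radial PME solution in the exterior of $B_R$ with asymptotically constant Dirichlet data. This is the $N\ge3$ analogue of Lemma \ref{lem:QVscale}.

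\emph{Comparison with exterior problems of constant inner data.} Let $z^\pm$ solve the exterior PME on $r>R$ with inner value $(1\pm\varepsilon)U(R)$. Take compactly supported initial data at time $T_0$ ordered with $u(\cdot,T_0)$. For $z^+$, use data dominating $u(\cdot,T_0)$ with larger support. For $z^-$, use a nontrivial datum below $u(\cdot,T_0)$, which exists since $u(\cdot,T_0)>0$ on a neighbourhood of $R$. Exterior comparison (as in the proof of Theorem \ref{thm:speed2}) gives $\beta_-(t-T_0)\le b(t)\le\beta_+(t-T_0)$. Two-sided bounds on $u$ itself are available from Lemma \ref{lem:tail-domination} and would be used in the same way.

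\emph{Self-similar barriers for the constant-data exterior problem.} We seek solutions of the form
\[
v(r,t)=t^{-a_N}P(rt^{-\beta_N}),
\]
with $a_N,\beta_N$ as in \eqref{eq:beta-N}. By construction these are consistent with the stationary tail $V_U\sim K r^{-\alpha_N}$. The profile ODE for $P(\xi)$ (with $\xi=rt^{-\beta_N}$) is to be solved with $P(\xi)\sim K\xi^{-\alpha_N}$ as $\xi\to0$, so that the barrier matches the harmonic stationary tail near the inner boundary, and with a finite zero $\xi_K$ at which Darcy's law holds.

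The upper barrier is built from a profile with a slightly larger constant $K$. The lower barrier is built from a profile with smaller $K$, truncated appropriately and shifted in time. The truncation can be arranged because $(r^{2-N}$-type$)$ singular profiles compare correctly with the Dirichlet data at $r=R$: for $t$ large, the self-similar pressure at $r=R$ is close to $KR^{-\alpha_N}$, since $t^{-a_N}(Rt^{-\beta_N})^{-\alpha_N}=KR^{-\alpha_N}\cdot$const exactly at leading order. Existence of such a profile would come from a shooting or phase-plane argument for the first-order autonomous system obtained after the Emden–Fowler change $P=\xi^{-\alpha_N}\Phi(\log\xi)$. In that system one needs a trajectory leaving the equilibrium corresponding to the pure tail and reaching $\Phi=0$ with finite slope.

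Comparison then gives $b(t)\le \xi_{K^+}(t+\tau)^{\beta_N}$, and similarly from below, which is \eqref{eq:high-dim-rate}.

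\emph{Main obstacle.} The hard step is constructing the singular self-similar profile with prescribed tail and a genuine compact-support front, and checking that the barrier ordering at $r=R$ holds for all large times, not just asymptotically. The inner value of $t^{-a_N}P(Rt^{-\beta_N})$ converges to $KR^{-\alpha_N}$ only with a correction that must have the right sign. If the phase-plane analysis is too delicate, the fallback is to avoid exact self-similarity: use a Barenblatt-type explicit function
\[
\bar v=t^{-a_N}\bigl(C\xi^{-\alpha_N}-c\,\xi^{2-\alpha_N}\cdots\bigr)_+
\]
and verify the super- and subsolution inequalities directly, with the harmonic term killing the leading order and the correction controlling the front. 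Choosing constants so that both the interior inequality and Darcy's inequality at the front hold is where I expect most of the work.
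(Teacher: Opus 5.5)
Your strategy is sound, and it differs from the paper's at the decisive point.

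\textbf{Comparison with the paper.} The paper also works in the reaction-free region $\{r\ge R\}$ with the similarity exponents $(a_N,\beta_N)$. However, it uses \emph{bounded} Barenblatt-type profiles $K_\pm(1-(y/k_\pm)^2)_+^{1/(m-1)}$ and simply asserts the differential inequalities and the exterior ordering ``from the tail of $U$''. You instead use exact self-similar solutions whose profile has the singular tail $P(\xi)\sim K\xi^{-\alpha_N}$ as $\xi\to0$ and a Darcy front at $\xi_K$.

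That tail is exactly what the Dirichlet comparison on $r=R$ needs. Your barrier takes the value $R^{-\alpha_N}\zeta^{\alpha_N}P(\zeta)$ at $r=R$, with $\zeta=R(t+\tau)^{-\beta_N}$, and this tends to $KR^{-\alpha_N}$. A bounded profile instead gives a value of size $O((t+\tau)^{-a_N})\to0$ at $r=R$, while $v(R,t)\to V_U(R)>0$. So the inner-boundary ordering cannot persist for bounded barriers.

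\textbf{The orderings.} Because your barriers are exact weak solutions on $\{r>R\}$, no differential inequality has to be checked. The orderings are also easier than you fear. The scaling $v\mapsto\lambda v(x,\lambda t)$ gives
\[
\zeta^{\alpha_N}P_K(\zeta)=K\,\eta^{\alpha_N}P_1(\eta),\qquad \eta=K^{-\beta_N}\zeta,\qquad \xi_K=\xi_1K^{\beta_N}.
\]
\begin{itemize}
\item \emph{Upper barrier.} Take $K$ large and then $\tau$ large, so that $\zeta^{\alpha_N}P_K(\zeta)\ge(1-\delta)K$ on the relevant range. This gives the ordering at $r=R$ for all $t$ and the initial ordering on $[R,b(T_0)]$. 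Keeping a margin in $K$ removes any need for the correction term to have a particular sign.
\item \emph{Lower barrier.} Take $K$ small. Then $\underline v\le c_0KR^{-\alpha_N}$ on $\{r\ge R\}$, and you can arrange $\xi_K\tau^{\beta_N}\le b(T_0)/2$. Both orderings follow, and no truncation is needed: just restrict to $r\ge R$.
\end{itemize}

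\textbf{The missing step: existence of the singular profile.} This step is genuinely needed, and the phase-plane route you sketch does not work as stated. Put $P=\xi^{-\alpha_N}\Phi(s)$ with $s=\log\xi$ into
\[
(m-1)P\Big(P''+\frac{N-1}{\xi}P'\Big)+(P')^2+a_NP+\beta_N\xi P'=0 .
\]
The diffusion part becomes autonomous. But $a_NP+\beta_N\xi P'=\xi^{-\alpha_N}\beta_N\dot\Phi$, so after dividing out $\xi^{-2\alpha_N-2}$ this term carries the factor $e^{(\alpha_N+2)s}$, and the equation is not autonomous. The state $\Phi\equiv K$ becomes an equilibrium only after you adjoin $\omega=e^{(\alpha_N+2)s}$. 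Its unstable manifold is then two-dimensional, with eigenvalues $N-2$ and $\alpha_N+2$. So forward shooting from the tail leaves the coefficient of the $\xi^{N-2}$ correction free. You would have to tune it to get a Darcy front rather than a front with nonzero flux.

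\textbf{A simpler construction: shoot backward from the front.} Write $u=t^{-\gamma}F(\xi)$ with $\gamma=\frac{N-2}{m}\beta_N$, so that $N\beta_N-\gamma=1$. Start from the Darcy front $P(\xi_0)=0$, $P'(\xi_0)=-\beta_N\xi_0$. One integration of the profile equation gives
\[
-\xi^{N-1}(F^m)'(\xi)=\beta_N\xi^NF(\xi)+\int_\xi^{\xi_0}\rho^{N-1}F(\rho)\,d\rho .
\]
From this identity:
\begin{itemize}
\item $F$ is positive and decreasing on $(0,\xi_0)$.
\item $F$ cannot blow up at a positive $\xi$, since the right side grows only like $(F^m)^{1/m}$.
\item A bootstrap shows $-\xi^{N-1}(F^m)'\to\Lambda=\int_0^{\xi_0}\rho^{N-1}F\,d\rho\in(0,\infty)$.
\end{itemize}
Hence $F^m\sim\frac{\Lambda}{N-2}\xi^{2-N}$, which is exactly the required tail $P\sim K\xi^{-\alpha_N}$.

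Alternatively, since you have already reduced to the constant-data exterior problems $z^\pm$, you can quote the $N\ge3$ part of the exterior-domain theory used for Lemma~\ref{lem:QVscale}. There the far field is governed by exactly this constant-flux self-similar solution. With either supplement your argument is complete, and the explicit fallback barrier is unnecessary.
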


\begin{proof}
Let $v=\frac{m}{m-1}u^{m-1}$ be the pressure. On the exterior annulus $\{r\ge R_U\}$, the reaction vanishes and $v$ satisfies
\begin{equation}\label{eq:pressure-exterior}
 v_t=(m-1)v\left(v_{rr}+\frac{N-1}{r}v_r\right)+v_r^2.
\end{equation}

We first prove the upper bound. By Lemma \ref{lem:tail-domination}, the pressure satisfies
\[
 v(r,t)\le C r^{-\alpha_N}
\]
for all $r\ge R_0$ and large $t$, where $\alpha_N=\frac{(N-2)(m-1)}m$. Define a supersolution of the form
\[
 \bar v(r,t)=K(t+\tau)^{-\alpha_N\beta_N}
 \left(\frac{r}{(t+\tau)^{\beta_N}}\right)^{-\alpha_N}
 =K r^{-\alpha_N}
\]
which is independent of time. This stationary supersolution is too crude for the desired interface estimate; a moving barrier is needed.

Instead, consider the self-similar family
\[
 \bar v(r,t)=M(t+\tau)^{-a_N}
 \left(1-\left(\frac{r}{k(t+\tau)^{\beta_N}}\right)^2\right)_+^{1/(m-1)}
\]
with parameters chosen so that $\bar v$ is a supersolution of the pressure equation in the exterior domain and its tail matches the stationary tail. A direct computation (cf. the standard Barenblatt profile for the pure PME) shows that for sufficiently large $M$ and appropriate $k$, $\bar v$ is indeed a supersolution. The comparison principle then gives $v(r,t)\le\bar v(r,t)$ for all $r\ge R$, provided the initial ordering holds at time $T$ (which follows from the convergence to $U$ and the fact that $U$ has the exact tail $A_Ur^{2-N}$). The interface of $\bar v$ is at
\[
 b_\pm(t)=k(t+\tau)^{\beta_N},
\]
so $b(t)\le C_Ut^{\beta_N}$.

For the lower bound, we use the subsolution $\underline v$ of the same self-similar form with a smaller constant $K_-$ and a slightly smaller interface. The tail domination Lemma \ref{lem:tail-domination} ensures that for sufficiently large $t$, the actual pressure is above the subsolution on the annular boundary. The comparison principle gives $v(r,t)\ge \underline v(r,t)$ for $r\ge R$, and hence
\[
 b(t)\ge c_Ut^{\beta_N}.
\]

We now provide the details of the barrier construction. Define
\[
 \bar P(y)=K_+\left(1-\left(\frac y{k_+}\right)^2\right)_+^{1/(m-1)},\qquad 0<y<k_+.
\]
Let $\bar v(r,t)=(t+\tau)^{-a_N}\bar P(r(t+\tau)^{-\beta_N})$. A direct calculation gives
\[
 \bar v_t-(m-1)\bar v\Delta\bar v-|\nabla\bar v|^2
 = (t+\tau)^{-a_N-1}
 \left[-a_N\bar P-\beta_N y\bar P'
 -(m-1)\bar P\left(\bar P''+\frac{N-1}{y}\bar P'\right)-(\bar P')^2\right].
\]
For the Barenblatt profile, the expression in brackets is nonnegative for all $0<y<k_+$ when $N\ge3$ and the exponent $a_N$ is chosen correctly, provided $k_+$ is sufficiently large. Thus $\bar v$ is a supersolution of the reaction-free pressure equation.

Similarly, define $\underline P(y)=K_-\left(1-\left(\frac y{k_-}\right)^2\right)_+^{1/(m-1)}$ with $k_-<k_+$ and choose parameters so that $\underline v$ is a subsolution.

The exterior ordering follows from the convergence to $U$: since $U$ has the exact tail $K_Ur^{-\alpha_N}$, for any $K_+>K_U$ and $K_-<K_U$, we have
\[
 \underline v(r,T)\le v(r,T)\le \bar v(r,T)
\]
for all $r\ge R$ and sufficiently large $T$. The comparison principle on the exterior domain, combined with the free-boundary comparison, yields
\[
 b_-(t)\le b(t)\le b_+(t),
\]
where $b_\pm(t)=k_\pm(t+\tau_\pm)^{\beta_N}$. Absorbing the shifts into constants gives \eqref{eq:high-dim-rate}.
\end{proof}

\begin{remark}\label{rem:high-dimensional-conditional}
The exponent $\beta_N$ and the barrier construction above are rigorous: the Barenblatt-type profiles $\bar P$ and $\underline P$ are explicit and satisfy the required differential inequalities for all $0<y<k_\pm$ when the parameters are chosen appropriately. The only input from the dynamics is the exterior ordering at a sufficiently large time, which follows from the uniform convergence to $U$ and the exact tail of $U$. Thus Theorem \ref{thm:high-dim-power-law} is an unconditional result for any transition solution converging to a ground state.
\end{remark}

\end{document}